\newtheorem{theorem}{Theorem}[section]
\newtheorem{proposition}{Proposition}[section]
\newtheorem{lemma}{Lemma}[section]
\newtheorem{corollary}{Corollary}[section]
\newtheorem{definition}{Definition}[section]
\newtheorem{remark}{Remark}
\numberwithin{equation}{section}
\def\qed{\hfill{$\Box$} \\}
\def\expect{{\mathbb  E}}
\def\Pr{{\mathbb P}}
\def\real{{\mathbb  R}}
\def\eqdef{\triangleq}
\def\ind{{\bf 1}}
\renewcommand{\leq}{\leqslant}
\renewcommand{\geq}{\geqslant}
\def\U{U}
\def\A{A}
\begin{document}


\title{\textbf{Characterizing Heavy-Tailed Distributions Induced by Retransmissions}
 }
\author{Predrag R. Jelenkovi\'c \hspace{0.5in} Jian Tan  \\
\begin{tabular} {c}
\small Department of Electrical Engineering \\
\small Columbia University, New York, NY 10027 \\
\small \{predrag, jiantan\}@ee.columbia.edu \\
\end{tabular}
 }
\date{September 7, 2007}
\maketitle

\begin{abstract}
\hspace{-1.5mm}\footnote{\hspace{-2mm} { \small This work was
supported in part by the NSF under grant number CNS-0435168.}}
\hspace{-1.5mm}\footnote{\hspace{-2mm} { \small Technical Report
EE2007-09-07, Department of Electrical Engineering, Columbia
University,  \\\phantom{XX} New York, NY,  September 7, 2007.}}
 \noindent Consider a generic data
unit of random size $L$ that needs to be transmitted over a channel
of unit capacity.  The channel availability dynamics is modeled as
an i.i.d. sequence $\{A, A_i\}_{i\geq 1}$ that is independent of
$L$. During each period of time that the channel becomes available,
say $A_i$, we attempt to transmit the data unit. If $L\le A_i$, the
transmission is considered successful; otherwise, we wait for the
next available period $A_{i+1}$ and attempt to retransmit the data
from the beginning. We investigate the asymptotic properties of the
number of retransmissions $N$ and the total transmission time $T$
until the data is successfully transmitted. In the context of
studying the completion times in systems with failures where jobs
restart from the beginning, it was first recognized in
\cite{FS05,SL06} that this model results in power law and, in
general, heavy-tailed delays. The main objective of this paper is to
uncover the detailed structure of this class of heavy-tailed
distributions induced by retransmissions.

\noindent More precisely, we study how the functional dependence
$(\Pr[L>x])^{-1} \approx \Phi((\Pr[A>x])^{-1})$ impacts the
distributions of $N$ and $T$; the approximation $\approx$ will be
appropriately defined in the paper depending on the context. In the
functional space of $\Phi(\cdot)$, we discover several functional
criticality points that separate classes of different functional
behavior of the distribution of $N$. For example, we show that if
$\log (\Phi(n))$ is slowly varying, then $\log (\Pr[N>n])$ is
essentially slowly varying as well. Interestingly, if $\log
(\Phi(n))$ grows slower than $e^{\sqrt{\log n}}$ then we have the
asymptotic equivalence $\log (\Pr[N>n]) \approx -\log(\Phi(n))$.
However, if $\log (\Phi(n))$ grows faster than $e^{\sqrt{\log n}}$ ,
this asymptotic equivalence does not hold and admits a different
functional form. Similarly, different types of functional behavior
are shown for moderately heavy tails (Weibull distributions) where
$\log (\Pr[N>n]) \approx -(\log \Phi(n))^{1/(\beta+1)}$ assuming
$\log \Phi(n) \approx n^{\beta}$, as well as the nearly exponential
ones of the form $\log (\Pr[N>n]) \approx -n/(\log n)^{1/\gamma},
\gamma>0$ when $\Phi(\cdot)$ grows faster than two exponential
scales $\log \log \left(\Phi(n)\right) \approx n^{\gamma}$.

\noindent We also discuss the engineering implications of our
results on communication networks since retransmission strategy is a
fundamental component of the existing network protocols on all
communication layers, from the physical to the application one.

\vspace{0.5cm}

\noindent \textbf{Keywords}: \noindent Retransmissions, Channel
(systems) with failures, Restarts, Origins of heavy-tails
(subexponentiality), Gaussian distributions, Exponential
distributions, Weibull distributions, Log-normal distributions,
Power laws.
\end{abstract}

\newpage


%

\section{Introduction}\label{s:intro}
Retransmissions represent one of the most fundamental approaches in
communication networks that guarantee data delivery in the presence
of channel failures. These types of mechanisms have been employed on
all networking layers, including, for example, Automatic Repeat
reQuest (ARQ) protocol (e.g., see Section~2.4 of \cite{BG92}) in the
data link layer where
  a packet is resent automatically in case of an error;  contention based ALOHA type
  protocols in the medium access control (MAC) layer that use random backoff and retransmission mechanism to
  recover data from collisions; end-to-end acknowledgement for  multi-hop transmissions in the transport layer; HTTP downloading scheme in
  the application layer, etc. We discuss the engineering
  implications of our results at the end of this introduction and,
  in  more detail, in Section \ref{s:APP}.

As briefly stated in the abstract, we use the following generic
channel with failures \cite{PJ07RETRANS} to model the preceding
situations. The channel dynamics is described as an on-off process
$\{(A, U), (A_i,U_i)\}_{i\geq 1}$ with alternating  periods when
channel is available $A_i$ and unavailable $U_i$, respectively;
$(A,A_i)_{i\geq 1}$ and $(U, U_i)_{i\geq 1}$ are two independent
sequences of i.i.d random variables. In each period of time that the
channel becomes available, say $A_i$, we attempt to transmit the
data unit of random size $L$. If $L\le A_i$, we say that the
transmission is successful; otherwise, we wait for the next period
$A_{i+1}$ when the channel is available and attempt to retransmit
the data from the beginning. We study the asymptotic properties of
the distributions of the total transmission time $T$ and number of
retransmissions $N$, for the precise definitions of these variables
and the model, see the following Subsection~\ref{s:model}.

The preceding model was introduced and studied in \cite{KN87} and,
apart from the already mentioned applications in communications, it
represents a generic model for other situations where jobs have to
restart from the beginning after a failure. It was first recognized
in \cite{FS05} that this model results in power law distributions
when the distributions of $L$ and $A$ have a matrix exponential
representation, and this result was rigorously proved and further
generalized in \cite{SL06}. Under  more general conditions,
\cite{PJ07RETRANS} discovers that the distributions of $N$ and $T$
follow power laws with the same exponent $\alpha$ as long as $\log
\Pr[L>x]\approx \alpha \log\Pr[A>x]$ for large $x$, which implies
that power law distributions, possibly with infinite mean
($0<\alpha<1$) and variance ($0<\alpha<2$),  may arise even when
transmitting superexponential (e.g., Gaussian) documents/packets.
More recent results on the heavy-tailed completion times in a system
with failures are developed in \cite{Asmussen07}. In this paper, we
further characterize this class of heavy-tailed distributions that
are induced by retransmissions.

Technically speaking, our proofs are based on the method introduced
in \cite{PJ07RETRANS} that uses the following key arguments. First,
in exploring the distribution of $N$, we assume that the functional
relationship $\Phi(\cdot)$, with $\bar{F}^{-1}(x) \approx
\Phi(\bar{G}^{-1}(x))$ between the probability distributions of
$\bar{F}(x)\eqdef \Pr[L>x]$ and $\bar{G}(x)\eqdef \Pr[A>x]$, is
eventually monotonically increasing, which guarantees the existence
of an asymptotic inverse $\Phi^{{\tiny \leftarrow}}(\cdot)$ of
$\Phi(\cdot)$, and then, we use the result that $\bar{F}(L)$ is a
uniform random variable on $(0,1)$ given that $\bar{F}(\cdot)$ is
continuous (see \cite{PJ07RETRANS,Jelen07ALOHA}), e.g., for
$\bar{F}(x)=\left(\bar{G}(x) \right)^{\alpha}, \alpha>0$, the key
argument on the uniform distribution of $\bar{F}(L)$ from
\cite{PJ07RETRANS} can be illustrated as
$$\Pr[N>n]=\expect\left[(1-\bar{G}(L))^n
\right]\approx
\expect[e^{-n\bar{G}(L)}]=\expect\left[e^{-n\bar{F}^{1/\alpha}(L)}\right]=
\frac{\Gamma(\alpha+1)}{n^{\alpha}}.$$ Second, in contrast to
\cite{SL06,Asmussen07}, instead of studying the total transmission
time $T$ directly, we study a simpler quantity $N$ and then use the
large deviations technique to investigate $T$, since $T$ can be
represented as a sum of $L$ and $\{(A_i+U_i)\}_{1\leq i < N}$; see
equation (\ref{defeq:T}) in the next subsection. Hence, our analysis
is entirely probabilistic, which differs from the work in
\cite{Asmussen07} that relies on Tauberian theorems.

More precisely, we extend the results from
\cite{Asmussen07,PJ07RETRANS} under a more
 unified framework and study how the functional dependence
 between the data characteristics and
channel  dynamics in the form $(\Pr[L>x])^{-1} \approx
\Phi(\Pr[A>x])^{-1})$ impacts the distribution of $N$,  where the
approximation $\approx$ will be possibly differently defined
according to the context. In the functional space of $\Phi(n)$, we
identify several functional criticality points that define different
classes of functional behavior of the distribution of $N$.
Specifically, in Subsection \ref{ss:VHA}, we show that if $\Phi(n)$
is dominantly varying, e.g., regularly varying, then $\Pr[N>n]
\approx \Phi(n)^{-1}$; see Proposition \ref{eq:proposition1} and
Theorem \ref{theorem:asympN2}.  As shown in Proposition
\ref{eq:proposition2}, the preceding tail equivalence between
$\Pr[N>n]$ and $\Phi(n)^{-1}$ basically does not hold if $\Phi(x)$
is not dominantly varying, e.g., if $\Phi(x)$ is lognormal.
Furthermore, we show in a weaker form that if $\log (\Phi(n))$ is
slowly varying, then $\log \left((\Pr[N>n])^{-1} \right)$ is
essentially slowly varying as well, as proved in Proposition
\ref{prop:slowVarying}.  Interestingly, if $\log (\Phi(n))$ grows
slower than $e^{\sqrt{\log n}}$ then we have the asymptotic
equivalence $\log \left(\Pr[N>n]\right) \approx -\log(\Phi(n))$ as
shown in Theorem \ref{theorem:1I} and Corollary \ref{corollary:1},
which implies parts (1:1), (2:1) and (2:2) of Theorem 2.1 in
\cite{Asmussen07} and extends Theorem 2 in \cite{PJ07RETRANS}.
However, if $\log (\Phi(n))$ grows faster than $e^{\sqrt{\log n}}$,
this asymptotic equivalence does not hold and we demonstrate a
different functional form in Proposition \ref{prop:betweenHalfOne}.

Next, for lighter distributions of Weibull type, in Subsection
\ref{ss:weibull}, we show that if $\log (\Phi(n))$ is regularly
varying with index $\beta>0$, then basically one obtains Weibull
distribution for $N$, i.e., $\log \left(\Pr[N>n] \right) \approx
-\left( \log \Phi(n) \right)^{1/(\beta+1)}$, as shown in Theorem
\ref{theorem:2I}, which we term moderately heavy (Weibull tail)
asymptotics; this result implies part (1:2) of Theorem 2.1 in
\cite{Asmussen07}, and provides a more precise logarithmic
asymptotics instead of a double logarithmic limit. Finally, in
Subsection \ref{ss:nearly}, we consider the situation when the
separation between $\Pr[L>x]$ and $\Pr[A>x]$ is very large, i.e.,
their distributions are roughly separated by more than two
exponential scales ($\log\log \left( \Phi(n)\right) \approx
n^{\gamma}$). This separation results in what we call the nearly
exponential distribution for $N$ in the form $\log
\left(\Pr[N>n]\right) \approx -n/(\log n)^{1/\gamma}$.

After the preceding characterization of the different classes of
distributional behavior for $N$, we study in Subsection
\ref{ss:asmpT} the total transmission time $T$. As previously stated
for studying $T$, we use the large deviation results  since $T$ can
be represented as the sum of $L$ and $\{(A_i+U_i\}_{1\leq i < N}$.
In this context, our primary results show that: (i) when
$\Phi(\cdot)$ is regularly varying, we derive the exact asymptotics
for $T$ in Theorem \ref{theorem:asympT}. (ii) when $\log
(\Phi(\cdot))$ is slowly varying, we obtain the logarithmic
asymptotics for $T$ in Theorem \ref{theorem:T}. (iii) when $\log
(\Phi(\cdot))$ is regularly varying with positive index, we derive,
in a different scale than in Theorem \ref{theorem:T}, the
logarithmic asymptotics in Theorem \ref{theorem:TWeibull}. Note that
the preceding three results on $T$ correspond to Theorems
\ref{theorem:asympN2} i), \ref{theorem:1I} and \ref{theorem:2I} on
$N$, respectively. Similarly, one can derive the respective
statements on $\Pr[T>t]$ for other results on $\Pr[N>n]$, but we
omit this to avoid lengthy expositions and repetitions.
Interestingly, we want to point out that, unlike Theorems
\ref{theorem:asympT} and \ref{theorem:T} requiring no conditions on
$A$ (Theorem \ref{theorem:asympT} needs $\expect[A]<\infty$), the
minimum conditions needed for Theorem \ref{theorem:TWeibull}, as
shown by Proposition \ref{prop:WeibullBalance},  basically involve a
balance between  the tail decays of $\Pr[A>x]$ and $\Pr[L>x]$.

From a practical perspective, our results suggest that careful
examination and possible redesign of retransmission based protocols
in communication networks might be needed. This is especially the
case for Ad  Hoc and resource limited sensor networks, where
frequent channel failures occur due to a variety of reasons,
including  signal fading,  multipath effects, interference,
contention with other nodes, obstructions, node mobility, and other
changes in the environment \cite{Ra02}.  In engineering
applications, our main discovery is the matching between the
statistical characteristics of the channel and transmitted data
(packets).   On the network application layer, most of us have been
inconvenienced when the connections would brake while we are
downloading a large file from the Internet. This issue has been
already recognized in practice where software for downloading files
was developed that would save the intermediate data (checkpoints)
and resume the download from the point when the connection was
broken. However, our results emphasize that, in the presence of
frequently failing connections, the long delays may arise even when
downloading relatively small documents. Hence, we argue that one
might need to modify the application layer software, especially  for
the wireless environment,  by introducing checkpoints even for small
to moderate size documents. In our related papers, we found that
several well-known retransmission based protocols in different
layers of networking architecture  can lead to power law delays,
e.g., ALOHA type protocols in MAC layer \cite{Jelen07ALOHA} and
end-to-end acknowledgements in transport layer \cite{Jelen07e2e}.
These new findings suggest that special care should be taken when
designing robust networking protocols, especially in the wireless
environment where channel failures are frequent. We discuss these
and other engineering implications of our results in
Section~\ref{s:APP}.

We also discuss possible solutions to alleviate this problem, such
as  assigning checkpoints, breaking large packets into smaller units
preferably by using dynamic packet fragmentation techniques
\cite{Jelen07frag}. Clearly,  there is a tradeoff between the sizes
of these newly created packets and the throughput since, if the
packets are too small, they will mostly contain the packet headers
and, thus, very little useful information.

Finally, we would like to point out that, in addition to the
preceding applications in communication networks
\cite{Jelen07e2e,PJ07RETRANS,Jelen07ALOHA,Jelen07frag} and job
processing on machines with failures \cite{FS05,SL06}, the model
studied in this paper may represent a basis for understanding more
complex failure prone systems, e.g., see the recent study on
parallel computing in \cite{Asmussen07July}.

The rest of the paper is organized as follows. After a detailed
description of the channel model in the next Subsection
\ref{s:model}, we present our main results in Section
\ref{s:results} that is composed of two parts: the asymptotics of
the distribution of $N$ in Subsection \ref{ss:asmpN} and the
asymptotics of the distribution of $T$ in Subsection \ref{ss:asmpT}.
In Subsection \ref{ss:asmpN} we study three types of distinct
behavior, i.e., the very heavy asymptotics in Subsection
\ref{ss:VHA}, the medium heavy (Weibull) asymptotics in Subsection
\ref{ss:weibull} and the nearly exponential asymptotics in
Subsection \ref{ss:nearly}. Then, we concludes the paper with
engineering implications in Section \ref{s:APP}, which is followed
by Section \ref{s:proofs} that contains some of the technical proofs
that have been deferred from the preceding sections.

\subsection{Description of the Channel}\label{s:model}
 In this section, we formally describe our model and provide necessary definitions and notation.
Consider transmitting a generic data unit of  random size $L$ over a
channel with failures. Without loss of generality, we assume that
the channel is of unit capacity. The channel dynamics is modeled as
an on-off process $\{(A_i,U_i)\}_{i\geq 1}$ with alternating
independent periods when channel is available $A_i$ and unavailable
$U_i$, respectively. In each period of time that the channel becomes
available, say $A_i$, we attempt to transmit the data unit and, if
$L\le A_i$, we say that the transmission was successful; otherwise,
we wait for the next period $A_{i+1}$ when the channel is available
and attempt to retransmit the data from the beginning.
  A sketch of the
  model depicting the system is drawn in Figure \ref{fig:tc}.
 \begin{figure}[h]
\centering
\includegraphics[width=13cm]{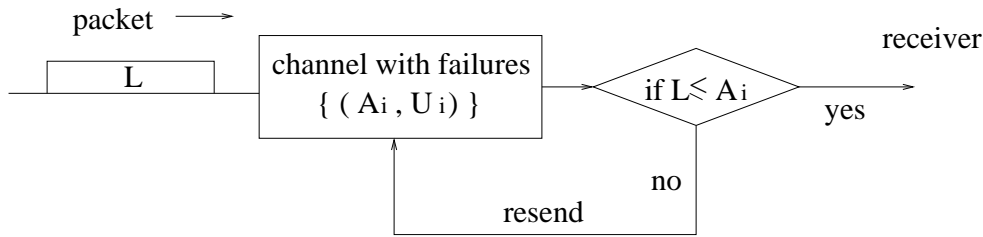}
  \caption{Packets sent over a channel with failures}
\label{fig:tc}
\end{figure}

  Assume that $\{ U, U_i \}_{i \geq 1}$ and $\{ A, A_i \}_{i\geq 1}$
 are two mutually independent sequences of
 i.i.d. random variables.
 \begin{definition}\label{def:NT}
 The total number of (re)transmissions for a generic data unit of length $L$ is defined
 as
 \begin{equation*}
  N \eqdef  \inf\{ n: {\A}_n\geq L \},
  \end{equation*}
  and, the total transmission time for the data unit is defined
 as
 \begin{equation}\label{defeq:T}
 T \eqdef \sum_{i=1}^{N-1}( {\A}_i +{\U}_i)+L.
 \end{equation}
 \end{definition}
We  denote
 the complementary cumulative distribution functions for $A$ and
 $L$, respectively, as
  \begin{equation*}
  \bar{G}(x)\eqdef \Pr[{\A}>x]
  \end{equation*} and
  \begin{equation*}
  \bar{F}(x)\eqdef \Pr[L>x].
  \end{equation*}

It was first discovered in Theorem~6 of
 \cite{SL06} that this model leads to subexponential delay $T$ under quite general conditions. The following
 slightly more general proposition was proven in Lemma 1 of \cite{PJ07RETRANS} using probabilistic arguments
  (see also Proposition 1.2 in \cite{Asmussen07}).
 \begin{proposition}\label{prop:sub}
  If $\bar{F}(x)>0$ for all $x\geq 0$, then both $N$ and $T$ are
  \emph{subexponential} in the following sense that,   for any $\epsilon>0$,
  \begin{equation}\label{eq:subN}
        e^{\epsilon n}\Pr[N>n] \rightarrow \infty \;\; \text{as}\; n
        \rightarrow \infty
  \end{equation}
  and
   \begin{equation}\label{eq:subT}
        e^{\epsilon t}\Pr[T>t] \rightarrow \infty \;\; \text{as}\; t
        \rightarrow \infty.
  \end{equation}
 \end{proposition}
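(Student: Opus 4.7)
Both parts of the proof exploit the hypothesis $\bar F(x)>0$ for all $x\ge 0$ to force $L$ to be as large as we wish, and they lean on the independence of $L$ from $\{A_i,U_i\}_{i\ge 1}$. The guiding idea is to lower-bound the probability in question by a specific joint event whose probability factorizes, and then tune a free threshold so that the resulting exponential rate can be made smaller than any prescribed $\epsilon$.

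For $N$, I would use the decomposition $\{N>n\}=\{A_i<L\ \forall\,i\le n\}$. Conditioning on $L$ and invoking the independence of $\{A_i\}$ from $L$ gives $\Pr[N>n\mid L]=(1-\bar G(L))^n$. Restricting the expectation to $\{L>M\}$ and using that $\bar G$ is nonincreasing yields, for every $M>0$,
$$\Pr[N>n]\ \ge\ (1-\bar G(M))^n\,\bar F(M).$$
Given $\epsilon>0$, I would choose $M$ large enough that $\bar G(M)<1-e^{-\epsilon/2}$ (possible since $\bar G(\cdot)\to 0$). Then $(1-\bar G(M))^n\ge e^{-\epsilon n/2}$, and hence $e^{\epsilon n}\Pr[N>n]\ge\bar F(M)\,e^{\epsilon n/2}\to\infty$, which is (\ref{eq:subN}).

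For $T$, I would start from $T\ge\sum_{i=1}^{N-1}(A_i+U_i)$ and pass to the stronger event $\{L>l_0\}\cap\{A_i<l_0\ \forall i\le n\}\subseteq\{N>n\}$, which disentangles the sum from the conditioning ``$A_i<L$''. By independence of $L$ from $(A_i,U_i)$,
$$\Pr[T>t]\ \ge\ \bar F(l_0)\,(1-\bar G(l_0))^n\,\Pr\!\left[\sum_{i=1}^n(\tilde A_i+U_i)>t\right],$$
where $\tilde A_i$ are i.i.d.\ copies of $A$ conditioned on $\{A<l_0\}$. Setting aside the degenerate case $A\equiv U\equiv 0$ (for which $\Pr[T=\infty]>0$ and the claim is immediate), I would take $l_0$ large enough that $\mu:=\expect[A\mid A<l_0]+\expect[U\wedge 1]>0$, set $n=\lceil 2t/\mu\rceil$, and bound $U_i\ge U_i\wedge 1$ inside. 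Since the summands are then bounded by $l_0+1$, Chebyshev gives $\Pr[\sum(\tilde A_i+U_i\wedge 1)>t]\ge 1/2$ for all $t$ large.

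Combining the three factors and using $\log(1-x)\ge -x/(1-x)$ on $x=\bar G(l_0)$ produces
$$\Pr[T>t]\ \ge\ c_1\,\bar F(l_0)\,e^{-c(l_0)\,t},\qquad c(l_0):=\frac{2\,\bar G(l_0)}{\mu\,(1-\bar G(l_0))}.$$
As $l_0\to\infty$, $\bar G(l_0)\to 0$ while $\mu$ stays bounded away from $0$ (it tends to $\expect[A]+\expect[U\wedge 1]>0$), so $c(l_0)\to 0$. Given $\epsilon>0$, I pick $l_0$ large enough that $c(l_0)<\epsilon/2$, and (\ref{eq:subT}) follows. The main obstacle is resisting the natural but losing bounds: $T\ge L$ alone only gives $\bar F(t)$, which need not be subexponential, and letting the threshold on $L$ grow with $t$ forces $(1-\bar G(L))^n$ to decay exponentially; the correct move is to \emph{freeze} $l_0$ and let the freedom to choose $l_0$ absorb the rate $\epsilon$, with the fixed-threshold trick simultaneously controlling the Chebyshev variance.
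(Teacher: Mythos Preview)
Your argument for \eqref{eq:subN} is correct and essentially identical to the paper's: both restrict to $\{L>M\}$ and use $(1-\bar G(M))^n\,\bar F(M)$ as the lower bound, then push $M$ large so that the geometric rate is slower than $e^{-\epsilon}$.

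For \eqref{eq:subT}, your proof is correct but takes a genuinely different route. The paper lower-bounds $T$ by $\sum_{i=1}^{N-1}X_i$ with $X_i=x_0\ind(x_0<A_i<x_1)$ (bounded i.i.d.\ depending only on the $A_i$), writes
\[
\Pr[T>t]\;\ge\;\Pr\!\Big[N>\tfrac{t(1+\zeta)}{\expect X_1}+1\Big]\;-\;\Pr\!\Big[\sum_{i\le t(1+\zeta)/\expect X_1}X_i\le t\Big],
\]
handles the second term by a Cram\'er--Chernoff large-deviation bound, and then invokes the already-proved \eqref{eq:subN} on the first term. Your approach instead conditions on $\{L>l_0,\,A_i<l_0\ \forall i\le n\}$ to produce an exact factorization into three independent pieces, uses Chebyshev (with the truncation $U_i\wedge 1$ to get bounded variance) to force the third piece above $1/2$, and then tunes $l_0$ so that the combined exponential rate $c(l_0)$ falls below~$\epsilon$. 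The paper's version is more modular---it treats \eqref{eq:subN} as a black box and never needs $U_i$ at all---while yours is more self-contained and avoids citing any exponential large-deviation estimate, relying only on Chebyshev.

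One minor point: in the degenerate case $A\equiv U\equiv 0$ you assert $\Pr[T=\infty]>0$, but with the paper's formula $T=\sum_{i=1}^{N-1}(A_i+U_i)+L$ one actually gets $T=L$ when $N=\infty$ and all increments vanish, so $\Pr[T>t]=\bar F(t)$, which need not be subexponential. The paper glosses over the same edge case (it claims ``$T$ will be infinite'' whenever $\bar G(x_0)=0$ for all $x_0>0$, which fails if also $U\equiv 0$), so this is a wrinkle in the statement rather than in your argument.
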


Clearly,  the preceding proposition defines a class of
subexponential distributions that are induced by retransmissions;
the \textbf{proof} of this proposition is presented in Subsection
\ref{ss:sub} for readers' convenience. The main study of this paper
is to uncover the detailed structure of this class of distributions.
More precisely, we investigate how the functional dependence of
$\bar{F}$ and $\bar{G}$ (stated in the form $\bar{F}^{-1}(x) \approx
\Phi( \bar{G}^{-1}(x) )$) impacts the tail behavior of the
distributions of both $N$ and $T$, and the exact meaning of
$\approx$ will be defined according to the context.


\section{Main Results}\label{s:results}
This section presents our main results. Here, we assume that
$\bar{F}(x)$ is a continuous function with support on $[0,\infty)$.
If $\bar{F}(x)$ is lattice valued, our results may still hold; see
Remarks~\ref{remark:lattice}~and~\ref{remark:lattice2}. If
$\bar{F}(x)$ has only a finite support,  we discuss this situation
in Section \ref{s:APP}; see also Example $3$ in Section IV of
\cite{PJ07RETRANS} and Section 3 of \cite{Asmussen07}. According to
(\ref{defeq:T}), the total transmission time $T$ naturally depends
on the number of transmissions $N$,  and therefore,  we first study
the distributional properties of the number of transmissions $N$ in
Subsection \ref{ss:asmpN}, and then evaluate the total transmission
time $T$ using the large deviation approach in Subsection
\ref{ss:asmpT}.

\subsection{Asymptotics of the Distribution of the Number of Retransmissions \textbf{$N$}}\label{ss:asmpN}
This subsection presents the asymptotic results for the number of
retransmissions $N$ depending on the functional relationship
$\Phi(\cdot)$ between $\bar{F}$ and $\bar{G}$. Informally, we study
three scenarios: very heavy asymptotics (when $\log(\Phi(n))$ is
slowly varying), medium heavy (Weibull) asymptotics (when
$\log(\Phi(n))$ is regularly varying), and nearly exponential (when
$\log \log ( \Phi(n))$ is regularly varying), where within and
between these subclasses we also identify critical functional points
that define different distributional behavior of $N$.

More precisely, we show that:
\begin{enumerate}
\item If $\Phi(n)$ is dominantly regularly varying, e.g., regularly varying, then
$\Pr[N>n] \approx \Phi(n)^{-1}$, as stated in Proposition
\ref{eq:proposition1} and Theorem \ref{theorem:asympN2}.
\item If $\Phi(n)$ is not dominantly regularly varying, e.g.,  $\Phi(n)^{-1}$ being
lognormal, the preceding tail equivalence $\Pr[N>n] \approx
\Phi(n)^{-1}$ basically does not hold, as shown in Proposition
\ref{eq:proposition2}.  However, we show in a weaker form that,  if
$\log (\Phi(n))$ is slowly varying, then $\log (\Pr[N>n])$ is
essentially slowly varying as well, as proved in Proposition
\ref{prop:slowVarying}. Interestingly, within this class, we
discover two types of distinct functional behavior of $\log
\Pr[N>n]$ depending on the growth of $\log (\Phi(n))$:
 \begin{enumerate}
\item If $\log (\Phi(n))$ grows slower than $e^{\sqrt{\log n}}$, then we have the
asymptotic equivalence $ \log (\Pr[N>n]) \approx -\log(\Phi(n)) $,
as shown in Theorem \ref{theorem:1I}.
\item This asymptotic equivalence does not hold if $\log (\Phi(n))$ grows faster
than $e^{\sqrt{\log n}}$, and we demonstrate a different functional
form in Proposition \ref{prop:betweenHalfOne}.
\end{enumerate}
\item If $\log (\Phi(n))$ is regularly varying with index $\beta>0$, then
basically one obtains a Weibull distribution for $N$, $\log
\Pr[N>n]\approx - (\log \Phi(n))^{1/(\beta+1)}$,  as presented in
Theorem \ref{theorem:2I}.
\item When the decay of $\Pr[L>x]$ is much faster than $\Pr[A>x]$, i.e., $\log \log \Pr[L>x]^{-1} \approx R_{\gamma}(\log
      \Pr[A>x]^{-1})$ with $R_{\gamma}(\cdot), \gamma>1$ being regularly varying, we
obtain nearly exponential distributions for $N$ in the form $\log
(\Pr[N>n]) \approx n/R_{\gamma}^{\leftarrow}(\log n)$ with
$R_{\gamma}^{\leftarrow}(n)$ being regularly varying with
$0<1/\gamma<1$, implying that $R_{\gamma}^{\leftarrow}(\log n)$ is
slowly varying; see Theorem \ref{theorem:nearly}.
\end{enumerate}

Our \textbf{proving method} is based on the following two key
arguments:
\begin{enumerate}
\item $\Phi(x)$ is eventually monotonically increasing, which
guarantees the existence of an inverse function
$\Phi^{{\leftarrow}}(x)$ of $\Phi(x)$ when $x$ is large enough.
\item $\bar{F}(x)$ is continuous, which implies that $V \eqdef \bar{F}(L)$ is a
uniform random variable on $(0,1)$, e.g., see Proposition 2.1 in
Chapter 10 of \cite{Ross02}. Furthermore, our method essentially
extends to lattice valued $\bar{F}(x)$ as well, as discussed in
Remarks~\ref{remark:lattice}~and~\ref{remark:lattice2}.
\end{enumerate}

\subsubsection{Very Heavy Asymptotics}\label{ss:VHA}
This subsection studies the situation when the distribution of the
number of retransmissions $N$ is heavier than Weibull distributions.
Specifically,  we answer under what conditions  $\Pr[N>n] \approx
\Phi(n)^{-1}$ holds assuming $\bar{F}^{-1}(x) \approx \Phi(
\bar{G}^{-1}(x) )$, meaning that the complementary cumulative
distribution function of $N$ is of the same form (in terms of
$\Phi(\cdot)$) as the functional relationship $\Phi(\cdot)$ between
$\bar{F}$ and $\bar{G}$.

We term this subclass very heavy distributions since if $\log
(\Phi(\cdot))$ is slowly varying, then the number of retransmissions
$N$ is always heavier than Weibull distribution, which is stated in
the following Proposition \ref{prop:slowVarying}.
\begin{proposition}\label{prop:slowVarying}
If $\log (\Phi(\cdot))$ is slowly varying and
\begin{equation}\label{eq:conditionI2}
   \lim_{x\rightarrow \infty}\frac{\log \left(\bar{F}(x)^{-1}\right)}{\log \left( \Phi \left( \bar{G}(x)^{-1} \right) \right)}=1,
   \end{equation}
   then, for any $\epsilon>0$,
as $n \to \infty$,
\begin{equation}
\lim_{n \to \infty}\frac{\log
\left(\Pr[N>n]^{-1}\right)}{n^{\epsilon}} = 0. \nonumber
\end{equation}
\end{proposition}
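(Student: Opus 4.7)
The plan is to produce a lower bound on $\Pr[N>n]$ by restricting the expectation representing $N$ to atypically large values of $L$. Starting from the identity
$$\Pr[N>n] \;=\; \expect[(1-\bar{G}(L))^n] \;\geq\; \expect\bigl[(1-\bar{G}(L))^n \ind\{L>x_n\}\bigr] \;\geq\; (1-\bar{G}(x_n))^n\, \bar{F}(x_n),$$
where $x_n$ is a free cutoff to be chosen and we have used that $\bar{G}$ is nonincreasing, so that $L>x_n$ implies $1-\bar{G}(L) \geq 1-\bar{G}(x_n)$. Taking logarithms, it suffices to show that
$$-n\log(1-\bar{G}(x_n)) + \log \bar{F}(x_n)^{-1} \;=\; o(n^{\epsilon}) \qquad \text{for every } \epsilon>0$$
along a suitable sequence $x_n\to\infty$.

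The first term is easy to make negligible: fix a small $\delta>0$ and choose $x_n$ so that $\bar{G}(x_n)\leq n^{-(1+\delta)}$ (via the generalized inverse of $\bar{G}$, which handles possible atoms without difficulty). Then $n\bar{G}(x_n)\leq n^{-\delta}\to 0$, and therefore $-n\log(1-\bar{G}(x_n)) = O(n\bar{G}(x_n)) \to 0$.

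For the second term, since $x_n\to\infty$, hypothesis (\ref{eq:conditionI2}) yields
$$\log \bar{F}(x_n)^{-1} \;\sim\; \log \Phi\bigl(\bar{G}(x_n)^{-1}\bigr) \;\leq\; (1+o(1))\,\log \Phi(n^{1+\delta}).$$
Because $\log\Phi$ is slowly varying, the Potter bound gives $\log \Phi(n^{1+\delta}) = o(n^{\epsilon})$ for every $\epsilon>0$. Combining the two estimates, $\log \Pr[N>n]^{-1} = o(n^{\epsilon})$, as required.

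The only subtlety concerns the choice of $x_n$: one must ensure $x_n\to\infty$ (so that (\ref{eq:conditionI2}) can be applied along this scale) while simultaneously forcing $\bar{G}(x_n)$ to decay faster than $1/n$; the generalized inverse of $\bar{G}$ delivers both. Apart from this routine bookkeeping, the argument reduces to a single well-chosen cutoff plus the fact that slowly varying functions grow subpolynomially, so I do not anticipate any real obstacle.
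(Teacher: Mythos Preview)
Your approach is essentially the paper's: both restrict to large $L$ and bound the two resulting factors, relying on the sub-polynomial growth of the slowly varying function $\log\Phi$. The paper phrases it through the event $\{\bar G(L)\le 1/n\}$ and the uniform variable $V=\bar F(L)$, obtaining $\Pr[N>n]\ge (1-1/n)^n\,\Phi(n)^{-1/(1-\epsilon)}\ge (1-1/n)^n e^{-n^\delta/(1-\epsilon)}$ directly from the relation (\ref{eq:equivBound2I}); your cutoff version is the same idea recast.

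One inequality in your write-up goes the wrong way, however. Having chosen $x_n$ with $\bar G(x_n)\le n^{-(1+\delta)}$ you only know $\bar G(x_n)^{-1}\ge n^{1+\delta}$, and since $\log\Phi$ is eventually nondecreasing this yields $\log\Phi\bigl(\bar G(x_n)^{-1}\bigr)\ge \log\Phi(n^{1+\delta})$, the opposite of what you need for the displayed bound. Contrary to your parenthetical remark, the generalized inverse does not resolve this at atoms of $\bar G$: immediately past a jump, $\bar G(x_n)$ may sit far below the target level $n^{-(1+\delta)}$, and nothing in the hypotheses prevents $\bar G(x_n)^{-1}$ from growing super-polynomially in $n$. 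The repair is easy: either pick the cutoff through the continuous function $\bar F$ instead (say $\bar F(x_n)=e^{-n^\delta}$, and then use (\ref{eq:conditionI2}) together with $\log\Phi(y)=o(y^{\delta'})$ for $\delta'<\delta$ to deduce $\bar G(x_n)=o(1/n)$), or follow the paper and use $\Phi^{\leftarrow}$ with the uniform variable $V=\bar F(L)$, which avoids inverting $\bar G$ altogether.
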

The \textbf{proof} of this proposition will be presented in
Subsection \ref{ss:slowVarying}. In the remainder of this subsection
we study the detailed structure of this class of  distributions that
have very heavy tails. The Weibull distribution will be studied in
the next Subsection \ref{ss:weibull} on medium heavy asymptotics.

\begin{definition}
For  an eventually non-decreasing function $\Phi(x): \real ^+ \to
\real ^+$, we say that $\Phi(x)$ is dominantly regularly varying if
\begin{equation}\label{eq:dominantRV}
     \varlimsup_{x \to \infty} \frac{\Phi(e x)}{ \Phi(x)}  < \infty,
\end{equation}
where $e \equiv \exp (1)$.
\end{definition}

In the paper we use the following standard notation. For any two
real functions $a(t)$ and $b(t)$ and fixed $t_{0}\in
\real\cup\{\infty\}$, we use $a(t)\thicksim b(t)$ as $t\rightarrow
t_{0}$ to denote $\lim_{t\rightarrow t_{0}} [a(t)/b(t)]=1$.
Similarly, we say that $a(t)\gtrsim b(t)$ as $t\rightarrow t_{0}$ if
$\varliminf_{t\rightarrow t_{0}}a(t)/b(t)\ge 1$; $a(t)\lesssim b(t)$
has a complementary definition. In addition, we say that
$a(t)=o(b(t))$ as $t\rightarrow t_0$ if $\lim_{t\rightarrow t_0}
a(t)/b(t)= 0$. When $t_0=\infty$, we often simply write
$a(t)=o(b(t))$ without explicitly stating $t\rightarrow \infty$ in
order to simplify the notation. Also, we use the standard definition
of an inverse function $f^{\leftarrow}(x) \eqdef \inf \{ y: f(y)> x
\}$ for a non-decreasing function $f(x)$; note that the notation
$f^{-1}(x)$ is reserved for $1/f(x)$.

The following two propositions show that $\Pr[N>n]$ is tail
equivalent to $\Phi(n)^{-1}$ basically only when $\Phi(n)$ is
dominantly regularly varying.
\begin{proposition}\label{eq:proposition1}
If,  as $x\to \infty$,
\begin{equation}\label{eq:propo1A}
 \bar{F}^{-1}(x) \sim \Phi( \bar{G}^{-1}(x) ),
 \end{equation}
 then, there is finite $c\geq 1$ such that
    \begin{equation}
        c^{-1} \leq  \varliminf_{n \to \infty } \Pr[N>n]\Phi(n) \leq  \varlimsup_{n \to
        \infty}\Pr[N>n]\Phi(n) \leq c. \nonumber
    \end{equation}
\end{proposition}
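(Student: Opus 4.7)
The approach is to exploit the paper's central probabilistic identity. Since $\bar{F}$ is continuous, $V \eqdef \bar{F}(L)$ is uniform on $(0,1)$, and conditioning on $L$ shows that $N$ is geometric with success probability $\bar{G}(L)$, so
$$\Pr[N>n] \;=\; \expect\bigl[(1-\bar{G}(L))^n\bigr] \;=\; \int_0^1 \bigl(1-\bar{G}(\bar{F}^{\leftarrow}(v))\bigr)^n \, dv.$$
The eventual monotonicity of $\Phi$ provides an asymptotic inverse $\Phi^{\leftarrow}$, and the hypothesis $\bar{F}^{-1}(x) \sim \Phi(\bar{G}^{-1}(x))$, after substituting $x = \bar{F}^{\leftarrow}(v)$ and inverting, becomes the key asymptotic
$$\bar{G}(\bar{F}^{\leftarrow}(v)) \;\sim\; \frac{1}{\Phi^{\leftarrow}(1/v)} \qquad \text{as } v \to 0^+.$$

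Next I would sandwich $(1-x)^n$ between $e^{-nx}$ (upper) and $e^{-nx/(1-x)}$ (lower) for small $x$, and combine this with the above asymptotic. Up to multiplicative $(1\pm\epsilon)$ factors, plus an exponentially small remainder coming from $v\in[v_0,1]$ (bounded by $(1-\bar G(\bar F^{\leftarrow}(v_0)))^n$), the task reduces to estimating
$$J(n) \;\eqdef\; \int_0^{v_0} \exp\!\left(-\frac{n}{\Phi^{\leftarrow}(1/v)}\right) dv.$$
The change of variable $w = \Phi^{\leftarrow}(1/v)$, i.e.\ $v = 1/\Phi(w)$ and $dv = d(-1/\Phi(w))$, rewrites this as
$$J(n) \;=\; \int_{w_0}^{\infty} e^{-n/w} \, d\!\left(-\frac{1}{\Phi(w)}\right),$$
with $w_0 = \Phi^{\leftarrow}(1/v_0)$.

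The upper bound $J(n) \leq c/\Phi(n)$ then follows by splitting at $w = n$. On $[n,\infty)$ the factor $e^{-n/w}$ is at most $1$ and the remaining $d(-1/\Phi)$-mass equals $1/\Phi(n)$ exactly. On $[w_0,n]$ the factor is at most $e^{-1}$, and a further subdivision at $w = n/\log\Phi(n)$ shows that on $[w_0,\,n/\log\Phi(n)]$ the integrand is bounded by $1/\Phi(n)$, while $[n/\log\Phi(n),n]$ contributes at most $e^{-1}\bigl(1/\Phi(n/\log\Phi(n)) - 1/\Phi(n)\bigr)$, which is $O(1/\Phi(n))$ provided $\Phi$ does not grow too violently on logarithmic scale. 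The matching lower bound comes from restricting $J(n)$ to a window $w \in [n,\kappa n]$ with $\kappa > 1$ fixed, where $e^{-n/w}\ge e^{-1}$ and the $d(-1/\Phi)$-mass equals $1/\Phi(n) - 1/\Phi(\kappa n)$, a positive constant multiple of $1/\Phi(n)$ for appropriate $\kappa$.

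The hard part is precisely this middle-range control: one needs $\Phi$ to grow fast enough that the contribution of $w\in[w_0,n]$ remains dominated by $1/\Phi(n)$, and simultaneously slow enough that $\Phi(\kappa n)/\Phi(n)$ stays bounded so the lower-bound window delivers a definite fraction of $1/\Phi(n)$. The constant $c$ in the statement then absorbs the $(1\pm\epsilon)$ factors from the initial sandwich, the $e^{-1}$ losses and boundary ratios from the integral estimates, and the regularity of $\Phi$ inherited from the Subsection \ref{ss:VHA} context.
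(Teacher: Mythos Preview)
Your overall framework matches the paper's: the geometric conditioning $\Pr[N>n]=\expect[(1-\bar G(L))^n]$, the uniformity of $V=\bar F(L)$, the exponential sandwich, and the change of variable to $w=\Phi^{\leftarrow}(1/v)$ are all exactly the paper's ingredients. But your middle-range decomposition for the upper bound has a genuine gap.

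You split $[w_0,n]$ at $w=n/\log\Phi(n)$ and claim the piece $[n/\log\Phi(n),\,n]$ contributes
\[
e^{-1}\Bigl(\tfrac{1}{\Phi(n/\log\Phi(n))}-\tfrac{1}{\Phi(n)}\Bigr)\;=\;O\!\left(\tfrac{1}{\Phi(n)}\right)
\]
``provided $\Phi$ does not grow too violently on logarithmic scale.'' This fails already for the canonical case $\Phi(n)=n^\alpha$: there $\Phi(n)/\Phi(n/\log\Phi(n))=(\log\Phi(n))^\alpha\asymp(\log n)^\alpha\to\infty$, so this piece is \emph{not} $O(1/\Phi(n))$. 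The hypothesis actually available here is dominant regular variation, $\Phi(e n)/\Phi(n)\le d<\infty$, which only controls ratios over \emph{bounded} multiplicative scales; it gives $\Phi(n)/\Phi(n/e^k)\le d^{k}$ but says nothing uniform about $\Phi(n)/\Phi(n/\log\Phi(n))$. A single cut cannot work. The paper instead decomposes geometrically into shells $\{e^k\le n/w<e^{k+1}\}$, $k\ge 0$, so that the $k$-th shell contributes at most $e^{-e^k}/\Phi(n/e^{k+1})\le e^{-e^k}d^{k+2}/\Phi(n)$; the doubly-exponential decay $e^{-e^k}$ kills the polynomial growth $d^{k+2}$, and the sum $\sum_{k\ge 0}e^{-e^k}d^{k+2}$ is the finite constant you need.

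Your lower bound also has a small problem as written: restricting to $w\in[n,\kappa n]$ gives mass $1/\Phi(n)-1/\Phi(\kappa n)$, which is \emph{not} a positive fraction of $1/\Phi(n)$ when $\Phi$ is slowly varying (then $\Phi(\kappa n)/\Phi(n)\to 1$). Take the window $[n,\infty)$ instead: $e^{-n/w}\ge e^{-1}$ there and the full mass is $1/\Phi(n)$, yielding $\varliminf\Pr[N>n]\Phi(n)\ge e^{-1}(1-\epsilon)$ directly. This is what the paper does, via the one-line estimate $\Pr[N>n]\ge(1-1/n)^n\Pr[\bar G(L)\le 1/n]\ge(1-1/n)^n(1-\epsilon)/\Phi(n)$.
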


\begin{remark}
Note that for this result as well as those in the rest of the paper
we could have equivalently assumed  that $\bar{F}(x) \sim
\Phi(\bar{G}(x))$ where $\Phi(\cdot)$ is eventually non-increasing
and satisfies the appropriate regularity conditions in the
neighborhood of $0$, e.g., condition (\ref{eq:propo1A}) would be
restated in the neighborhood of $0$. In this case, the respective
statement would be in the form $\Pr[N>n] \approx \Phi(n^{-1})$.
Furthermore, the current form has additional notational benefits in
the later sections, e.g., $\log \log \Phi(n)$ would need to be
replaced by $\log\left( - \log (\Phi(n^{-1})) \right)$ in (say)
Proposition \ref{prop:betweenHalfOne}.
\end{remark}

\begin{proposition}\label{eq:proposition2}
If (\ref{eq:propo1A}) is satisfied and $\Phi(x)$ is eventually
non-decreasing with
\begin{equation}
     \lim_{x \to \infty} \frac{\Phi(e x)}{ \Phi(x)}= \infty,
     \nonumber
\end{equation}
then,
    \begin{equation}
        \lim_{n \to \infty }\Pr[N>n]\Phi(n) = \infty.
        \nonumber
    \end{equation}
\end{proposition}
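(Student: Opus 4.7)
The plan is to adapt the representation used in Proposition 2.1, namely $\Pr[N>n] = \expect[(1-\bar{G}(L))^n]$, combined with the fact that $V \eqdef \bar{F}(L)$ is uniform on $(0,1)$ (since $\bar{F}$ is continuous), so that
$$\Pr[N>n] = \int_0^1 \bigl(1 - g(v)\bigr)^n\,dv, \qquad g(v) \eqdef \bar{G}(\bar{F}^{\leftarrow}(v)).$$
Substituting $v=\bar{F}(x)$ in (\ref{eq:propo1A}) yields $1/v\sim\Phi(1/g(v))$ as $v\to 0$, whence $g(v)\sim 1/\Phi^{\leftarrow}(1/v)$ by the eventual monotonicity of $\Phi$. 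Unlike in Proposition 2.1, where a matching upper bound was available, here the entire point is a careful choice of integration window: the length of the window itself supplies the factor that diverges against $\Phi(n)$.

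First I would fix $\epsilon>0$, set $v_x\eqdef 1/\Phi(x)$, and restrict the integration to the interval $I_n \eqdef [\,v_n,\,v_{n/e}\,]$; the ordering $v_n\leq v_{n/e}$ is automatic from monotonicity of $\Phi$. On this window, $1/v\in[\Phi(n/e),\Phi(n)]$, so $\Phi^{\leftarrow}(1/v)\in[n/e,n]$, and hence for all sufficiently large $n$ one has $g(v)\leq(1+\epsilon)e/n$, giving
$$\bigl(1-g(v)\bigr)^n \;\geq\; \bigl(1-(1+\epsilon)e/n\bigr)^n \;\geq\; e^{-(1+2\epsilon)e},$$
a constant bounded away from zero uniformly on $I_n$.

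The decisive step, where the hypothesis $\Phi(ex)/\Phi(x)\to\infty$ finally enters, is the length estimate
$$v_{n/e}-v_n \;=\; \frac{1}{\Phi(n/e)}\Bigl(1-\frac{\Phi(n/e)}{\Phi(n)}\Bigr);$$
applying the hypothesis at $x=n/e$ gives $\Phi(n/e)/\Phi(n)\to 0$, so $v_{n/e}-v_n\sim 1/\Phi(n/e)$. Combining with the pointwise lower bound,
$$\Pr[N>n]\,\Phi(n) \;\geq\; e^{-(1+2\epsilon)e}\,(1-o(1))\,\frac{\Phi(n)}{\Phi(n/e)} \;\longrightarrow\;\infty,$$
which is the desired conclusion. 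The main technical nuisance I anticipate is handling $\Phi^{\leftarrow}$ when $\Phi$ has plateaus (since only eventual monotonicity is assumed), but this is cosmetic: one may replace $\Phi$ on the relevant range by a strictly increasing, asymptotically equivalent majorant, and the errors are absorbed by the $\sim$'s above.
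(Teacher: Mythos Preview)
Your approach is correct and is essentially the paper's own argument: both restrict the integral $\Pr[N>n]=\expect[(1-\bar G(L))^n]$ to a window on which $\bar G(L)\le C/n$, obtain a constant lower bound $(1-C/n)^n\ge e^{-C}>0$ for the integrand, observe that the window has length $\sim 1/\Phi(n/e)$, and conclude via $\Phi(n)/\Phi(n/e)\to\infty$. The paper phrases this as $\Pr[N>n]\ge(1-e/n)^n\Pr[\bar G(L)\le e/n]$ and shows the latter event contains $\{V\le(1-\epsilon)/\Phi(n/e)\}$; your integration window $[1/\Phi(n),1/\Phi(n/e)]$ plays the same role.

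One small imprecision: the step ``$1/v\sim\Phi(1/g(v))$ whence $g(v)\sim 1/\Phi^{\leftarrow}(1/v)$'' does not follow from eventual monotonicity alone---inverting an asymptotic equivalence through $\Phi$ requires $\Phi^{\leftarrow}((1\pm\epsilon)y)\sim\Phi^{\leftarrow}(y)$, which the hypotheses do not guarantee. What \emph{does} follow is $g(v)\le 1/\Phi^{\leftarrow}((1-\epsilon)/v)$, and then the hypothesis $\Phi(ex)/\Phi(x)\to\infty$ gives $\Phi^{\leftarrow}((1-\epsilon)y)\ge\Phi^{\leftarrow}(y)/e$ for large $y$, so on your window $g(v)\le e^2/n$ rather than $(1+\epsilon)e/n$. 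This changes only the constant in front and the conclusion survives intact; the paper sidesteps the issue by carrying the $(1-\epsilon)$ factor through explicitly instead of asserting an asymptotic for $g$.
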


When $\Phi(\cdot)$ is regularly varying, which is a subset of the
dominantly regularly varying functions, we can compute the exact
asymptotics of the distribution of $N$.
\begin{theorem}\label{theorem:asympN2}
 Assuming  $\bar{F}^{-1}(x) \thicksim \Phi\left(
\bar{G}^{-1}(x)\right)$ where $\Phi(\cdot)$ is regularly varying
with index $\alpha$, we obtain:

i) If  $\alpha>0$, then, as $n \rightarrow \infty$,
 \begin{equation}\label{eq:asympNa}
\Pr[N>n]\thicksim\frac{\Gamma( \alpha+1 )}{ \Phi\left( n \right) }.
\end{equation}

ii) If $\alpha=0$ (meaning $\Phi(\cdot)$ is  slowly varying) and
$\Phi(x)$ is eventually non-decreasing,
then, as $n \rightarrow \infty$,
 \begin{equation}\label{eq:asympNb}
\Pr[N>n]\thicksim \frac{1}{\Phi(n)}.
\end{equation}
\end{theorem}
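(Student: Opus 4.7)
The plan is to follow the template outlined in the introduction: start from the identity $\Pr[N>n]=\expect[(1-\bar{G}(L))^n]$, which holds because, conditional on $L$, the index $N$ is geometric with success probability $\bar{G}(L)$ (continuity of $\bar{F}$ ensures that possible atoms of $A$ at $L$ integrate to zero). The same continuity makes $V\eqdef\bar{F}(L)$ uniform on $(0,1)$, so, writing $h(v)\eqdef\bar{G}(\bar{F}^{\leftarrow}(v))$,
\begin{equation*}
  \Pr[N>n] \;=\; \int_0^1 \bigl(1-h(v)\bigr)^n\,dv.
\end{equation*}

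Next, I translate the hypothesis $1/\bar{F}(x)\sim \Phi(1/\bar{G}(x))$ as $x\to\infty$ into an asymptotic for $h$. Setting $x=\bar{F}^{\leftarrow}(v)$ yields $\Phi(1/h(v))\sim 1/v$ as $v\to 0^+$; since $\Phi$ is eventually non-decreasing (automatic for positive index, assumed in case ii)), one may invert to obtain $h(v)\sim 1/\Phi^{\leftarrow}(1/v)$. Equivalently, the random variable $Y\eqdef \Phi^{\leftarrow}(1/V)$ satisfies $\Pr[Y>y]\sim 1/\Phi(y)$ and $\bar{G}(L)$ is asymptotically $1/Y$ near the relevant tail region $V\to 0^+$.

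The core step is to replace $(1-h(v))^n$ by $e^{-n h(v)}$ and compute the resulting Laplace-type integral. I split the integration range at a sequence $\varepsilon_n\to 0$ with $n\varepsilon_n^2\to 0$: on $\{h(v)\le\varepsilon_n\}$, a Taylor expansion of $\log(1-h)$ gives $(1-h(v))^n/e^{-nh(v)}\to 1$ uniformly, while on the complement $(1-h(v))^n\le(1-\varepsilon_n)^n$ decays exponentially in $n\varepsilon_n$ and contributes $o(1/\Phi(n))$. Writing the remainder via the tail of $Y$ and then substituting $u=n/y$, I reduce the problem to showing
\begin{equation*}
  \int_0^\infty \frac{e^{-u}}{\Phi(n/u)}\,du
   \;\sim\; \frac{1}{\Phi(n)}\int_0^\infty u^{\alpha} e^{-u}\,du
   \;=\; \frac{\Gamma(\alpha+1)}{\Phi(n)}.
\end{equation*}
For $\alpha>0$, the Uniform Convergence Theorem gives $\Phi(n)/\Phi(n/u)\to u^{\alpha}$ on compact subsets of $(0,\infty)$, and Potter's bounds furnish the integrable majorant $c\,u^{\alpha-\delta}$ near $u=0$ and $c\,u^{\alpha+\delta}e^{-u}$ near $u=\infty$ (for small $\delta>0$), so dominated convergence finishes the proof. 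For $\alpha=0$, the analogous Potter bounds for slowly varying functions, combined with the assumed monotonicity of $\Phi$, yield the same conclusion with $\Gamma(1)=1$, producing $\Pr[N>n]\sim 1/\Phi(n)$.

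The principal obstacle is coordinating the uniform Laplace approximation $(1-h(v))^n\approx e^{-n h(v)}$ with the only-pointwise asymptotic $h(v)\sim 1/\Phi^{\leftarrow}(1/v)$: one has to pick $\varepsilon_n$ so that the exceptional set contributes $o(1/\Phi(n))$, sandwich $h$ between $(1\pm\eta)/\Phi^{\leftarrow}(1/v)$ for $v$ below a threshold, and verify that the Potter-type majorant survives on the whole transformed range in $u$. Away from these technicalities the argument is a direct generalization of the $\Phi(x)=x^{\alpha}$ computation already displayed in the introduction.
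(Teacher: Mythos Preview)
Your proposal is correct and follows the same skeleton as the paper's proof: the conditional-geometric identity $\Pr[N>n]=\expect[(1-\bar G(L))^n]$, the uniform variable $V=\bar F(L)$, the sandwich $\bar G(L)\approx 1/\Phi^{\leftarrow}(1/V)$ coming from inverting the hypothesis, and the evaluation of the resulting Laplace-type integral via regular variation. The paper differs only in the technical packaging of the last step: instead of Potter bounds plus dominated convergence, it replaces $\Phi$ by the absolutely continuous version $\Phi^\ast(x)=\alpha\int_1^x \Phi(s)s^{-1}\,ds$, writes the integral against the density $\Phi'/\Phi^2$, and controls the tail $u>e^m$ by an explicit dyadic (geometric) decomposition $\Phi(n)/\Phi(n/e^{k+1})\le d^{k+2}$; for part ii) it argues the upper and lower bounds separately, using $(1-\epsilon/n)^n$ for the lower bound rather than your uniform Laplace replacement. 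Your route is slightly cleaner and avoids introducing $\Phi^\ast$, while the paper's dyadic bound is more self-contained in that it does not invoke Potter's theorem by name. One cosmetic point: your displayed integral $\int_0^\infty e^{-u}/\Phi(n/u)\,du$ should really be over $(0,n\varepsilon_n]$ (so that $n/u$ stays in the range where $\Phi$ and the Potter bounds are valid); you clearly intend this from your preceding split, but it is worth writing the truncated limits explicitly before passing to $\infty$.
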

\begin{remark}
For $\alpha>0$, this theorem was proved in Theorem 4 of
\cite{PJ07RETRANS} using the method that we further expand in this
paper; alternatively,  a similar result for $T$ was proved using
Tauberian method in Theorem 2.2 of \cite{Asmussen07}. We will prove
the corresponding result for $T$ in Theorem \ref{theorem:asympT} in
Subsection \ref{ss:asmpT}.
\end{remark}

\begin{remark}[Lattice variables]\label{remark:lattice}
Note that if $\bar{F}(x)$ and $\bar{G}(x)$ are {\it lattice valued},
then the distribution of $N$ may still be tail equivalent to
$\Phi(n)^{-1}$, as in Proposition \ref{eq:proposition2}, but the
constant in front of $\Phi(n)^{-1}$ may be different from
$\Gamma(\alpha+1)$, e.g., if $\Pr[L>n] \sim e^{-p n}, p>0$ and
$\Pr[A>n] \sim e^{-q n}, q>0$, then this constant is between $
e^{-p} \Gamma(1+p/q)$ and $e^{p}\Gamma(1+p/)$.
\end{remark}

Before moving to the proof,  we state two straightforward
consequences of the preceding theorems; see also Theorem 1 and
Corollary 1 in \cite{PJ07RETRANS}. The following corollary allows
$\bar{F}$ and $\bar{G}$ to have exponential type distributions, and
the corresponding result for $T$ was first derived in Theorem 7 of
\cite{SL06}.
\begin{corollary}
 Assume that $\bar{G}(x)\thicksim e^{-\beta x}$ and $\bar{F}(x)\thicksim
a x^be^{-\delta x}$ where $b\in \real$ and $a, \beta>0$,  then,
 \begin{equation}
      \Pr[N>n]\thicksim
      a\Gamma\left(\frac{\delta}{\beta}+1\right){\beta}^{-b}\frac{(\log
      t)^b}{t^{\frac{\delta}{\beta}}}.
 \end{equation}
\end{corollary}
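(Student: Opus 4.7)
The plan is to reduce the corollary directly to Theorem \ref{theorem:asympN2}(i) by identifying the right functional relation $\Phi$ between $\bar{F}$ and $\bar{G}$ and checking it is regularly varying with positive index. Recall from the excerpt that the paper's convention is $\bar{F}^{-1}(x)=1/\bar{F}(x)$ and $\bar{G}^{-1}(x)=1/\bar{G}(x)$, so the hypotheses give $\bar{G}^{-1}(x)\sim e^{\beta x}$ and $\bar{F}^{-1}(x)\sim a^{-1}x^{-b}e^{\delta x}$ as $x\to\infty$.

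First, I would invert the asymptotic for $\bar{G}^{-1}$: if $y=\bar{G}^{-1}(x)\sim e^{\beta x}$, then $x\sim (\log y)/\beta$. Substituting this into the asymptotic for $\bar{F}^{-1}(x)$ gives
\[
 \bar{F}^{-1}(x)\sim a^{-1}\bigl((\log y)/\beta\bigr)^{-b}\exp\!\bigl(\delta(\log y)/\beta\bigr)
   = a^{-1}\beta^{b}(\log y)^{-b}\,y^{\delta/\beta},
\]
which identifies
\[
 \Phi(y)\eqdef a^{-1}\beta^{b}(\log y)^{-b}\,y^{\delta/\beta},
\]
so that $\bar{F}^{-1}(x)\sim\Phi(\bar{G}^{-1}(x))$ as $x\to\infty$, exactly the hypothesis of Theorem~\ref{theorem:asympN2}. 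Since $(\log y)^{-b}$ is slowly varying and $y^{\delta/\beta}$ is regularly varying with positive index $\alpha=\delta/\beta>0$, $\Phi(\cdot)$ is regularly varying with index $\alpha=\delta/\beta$.

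Then I would invoke part (i) of Theorem~\ref{theorem:asympN2} to conclude
\[
 \Pr[N>n]\sim\frac{\Gamma(\alpha+1)}{\Phi(n)}
 =\frac{\Gamma(\delta/\beta+1)}{a^{-1}\beta^{b}(\log n)^{-b}n^{\delta/\beta}}
 =a\,\Gamma\!\left(\tfrac{\delta}{\beta}+1\right)\beta^{-b}\,\frac{(\log n)^{b}}{n^{\delta/\beta}},
\]
which matches the claimed expression (the symbol $t$ in the corollary's display being a typo for $n$).

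There is essentially no hard step here, since the substitution and the application of Theorem~\ref{theorem:asympN2} are routine once the regular variation of $\Phi$ is verified. The only mild subtlety is checking that the logarithmic correction $(\log y)^{-b}$ does not spoil regular variation — but this is immediate since any power of $\log$ is slowly varying, and the product of a regularly varying function and a slowly varying one is regularly varying with the same index. Hence the main obstacle (if any) is purely bookkeeping: keeping track of the constants $a$, $\beta$, $\delta$, $b$ under the inversion of $\bar{G}^{-1}$ so that the final prefactor comes out as $a\,\Gamma(\delta/\beta+1)\beta^{-b}$.
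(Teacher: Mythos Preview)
Your proposal is correct and follows essentially the same route as the paper: identify $\Phi(y)=a^{-1}\beta^{b}(\log y)^{-b}y^{\delta/\beta}$ from the asymptotic relation between $\bar{F}^{-1}$ and $\bar{G}^{-1}$, observe it is regularly varying with index $\delta/\beta>0$, and apply Theorem~\ref{theorem:asympN2}(i). The paper's own proof is just as brief and makes exactly these moves (its reference to Theorem~\ref{theorem:asympT} at the end is a typo for Theorem~\ref{theorem:asympN2}).
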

\begin{proof}
 It is easy to verify that, as $x\rightarrow \infty$,
\begin{equation*}
 \bar{F}^{-1}(x)
\thicksim a^{-1} \beta^{b} \left( \log \bar{G}^{-1}(x) \right)^{-b}
\bar{G}(x)^{-\frac{\delta}{\beta}},
\end{equation*}
 and, therefore, we can choose
 \begin{equation*}
        \Phi(x)=a^{-1} \beta^{b} \left( \log x \right)^{-b}
x^{\frac{\delta}{\beta}},
 \end{equation*}
 which, by using Theorem \ref{theorem:asympT}, finishes the
 proof.\qed
\end{proof}

The following corollary  allows $\bar{F}$ and $\bar{G}$ to have
normal-like distributions, i.e., much lighter tails than exponential
distributions, as shown in Corollary 1 of \cite{PJ07RETRANS} (see
also Corollary 2.2 in \cite{Asmussen07}).
\begin{corollary}\label{co:normal}
Suppose $\bar{G}(x)=\Pr[|N(0,{\sigma}_{A}^2)|>x]$ and
$\bar{F}(x)=\Pr[|N(0,{\sigma}_{L}^2)|>x]$, where $N(0,{\sigma}^2)$
is a Gaussian random variable with mean zero and variance
${\sigma}^2$, then,
\begin{equation}\label{eq:normal}
    \Pr[N>n] \thicksim  \Gamma\left( \alpha+1 \right)\alpha^{-1/2}
    \frac{\left( \pi \log n \right)^{\frac{1}{2}\left( \alpha-1 \right) }}
     { n^{\alpha} },
\end{equation}
where $\alpha = {\sigma_A}^2/{\sigma_L}^2$.
\end{corollary}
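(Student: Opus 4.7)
The plan is to invoke Theorem \ref{theorem:asympN2} i) with a regularly varying $\Phi$ of index $\alpha = \sigma_A^2/\sigma_L^2$ that satisfies the hypothesis $\bar F^{-1}(x) \sim \Phi(\bar G^{-1}(x))$. First, Mills' ratio gives the standard half-Gaussian asymptotics $\bar G(x) \sim C_A x^{-1} e^{-x^2/(2\sigma_A^2)}$ and $\bar F(x) \sim C_L x^{-1} e^{-x^2/(2\sigma_L^2)}$ as $x\to\infty$, with $C_A = \sigma_A\sqrt{2/\pi}$ and $C_L = \sigma_L\sqrt{2/\pi}$.

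Next, I would invert $\bar G$ asymptotically. Writing $u \eqdef 1/\bar G(x)$, taking logarithms yields $\log u = x^2/(2\sigma_A^2) + \log x - \log C_A + o(1)$, from which $x \sim \sigma_A\sqrt{2\log u}$ to leading order, and consequently $\log x = \tfrac{1}{2}\log\log u + \tfrac{1}{2}\log 2 + \log\sigma_A + o(1)$. Substituting into $\log\bar F^{-1}(x) = x^2/(2\sigma_L^2) + \log x - \log C_L + o(1)$ and using $x^2/(2\sigma_L^2) = \alpha\cdot x^2/(2\sigma_A^2) = \alpha(\log u - \log x + \log C_A) + o(1)$ produces
\begin{equation*}
\log \bar F^{-1}(x) \;=\; \alpha \log u + (1-\alpha)\log x + \bigl(\alpha\log C_A - \log C_L\bigr) + o(1).
\end{equation*}
Plugging in the expansion for $\log x$ and collapsing the constants via $\sigma_A^2/\sigma_L^2 = \alpha$ identifies
\begin{equation*}
\Phi(u) \;=\; \sqrt{\alpha}\,(\pi\log u)^{(1-\alpha)/2}\,u^{\alpha},
\end{equation*}
which is regularly varying with index $\alpha$ since the prefactor $\sqrt{\alpha}(\pi\log u)^{(1-\alpha)/2}$ is slowly varying.

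Then Theorem \ref{theorem:asympN2} i) directly gives $\Pr[N>n] \sim \Gamma(\alpha+1)/\Phi(n)$, which simplifies to the claimed expression $\Gamma(\alpha+1)\alpha^{-1/2}(\pi\log n)^{(\alpha-1)/2}/n^{\alpha}$.

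The main obstacle is the arithmetic bookkeeping of logarithmic corrections: one must retain the $(1-\alpha)\log x$ term in the expansion rather than use only the leading relation $x^2/(2\sigma_L^2) \sim \alpha\log u$, since that term is precisely what generates the $(\pi\log n)^{(\alpha-1)/2}$ factor and the $\alpha^{-1/2}$ constant. A useful sanity check is $\alpha=1$ (equal variances), where this correction vanishes and one recovers $\Pr[N>n] \sim \Gamma(2)/n = 1/n$, consistent with the $\bar F(x) = \bar G(x)^{\alpha}$ illustration in the introduction.
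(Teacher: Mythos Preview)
Your proposal is correct and follows essentially the same approach as the paper: identify the Mills-ratio asymptotics for $\bar F$ and $\bar G$, extract a regularly varying $\Phi(u)=\sqrt{\alpha}\,(\pi\log u)^{(1-\alpha)/2}u^{\alpha}$ relating them, and apply Theorem~\ref{theorem:asympN2}~i). The only cosmetic difference is that the paper eliminates $x$ by writing $\bar F(x)$ directly in terms of $\bar G(x)$ and $-\log\bar G(x)$, whereas you invert $\bar G$ explicitly and carry the logarithmic bookkeeping through $\log u$; both routes yield the same $\Phi$.
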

\begin{proof}
   First, notice that
   $$\Pr[|N(0,{\sigma}^2)|>x]\thicksim
   \frac{2\sigma}{\sqrt{2\pi}x}e^{-\frac{x^2}{2{\sigma}^2}},
   $$
   and therefore, recalling  $\alpha = {\sigma_A}^2/{\sigma_L}^2$, we obtain
   $$
     \bar{F}(x) \thicksim {\pi}^{\frac{1}{2}\left( \alpha-1 \right)}
      {\alpha}^{-1/2}
      \left( -\log \bar{G}(x) \right)^{\frac{1}{2}\left( \alpha-1 \right)}
     \left(\bar{G}(x)\right)^{\alpha}.
   $$
Hence, $\bar{F}(x)$ and $\bar{G}(x)$ satisfy the assumption of
Theorem \ref{theorem:asympN2} with
$$
\Phi(x) =   {\alpha}^{1/2}
      (\pi \log x)^{\frac{1}{2}\left( 1-\alpha
     \right)}x^{\alpha},
$$
which implies (\ref{eq:normal}). \qed
\end{proof}

Next, we present the proofs for the preceding Propositions
\ref{eq:proposition1}, \ref{eq:proposition2} and Theorem
\ref{theorem:asympN2}. Note that the following proof represents a
basis for the other proofs in this paper.

\begin{proof}[of Proposition \ref{eq:proposition1}]
 Notice that the number of retransmissions is geometrically
 distributed given the packet size $L$,
 $$ \Pr[N>n \mid L]=(1-\bar{G}(L))^n
 $$
and, therefore,
 \begin{equation}\label{eq:rep}
  \Pr[N>n]=\expect[(1-\bar{G}(L))^n].
\end{equation}

Since $\Phi(x)$ is eventually non-decreasing, there exists $x_{0}$
such that for all $x>x_0$, $\Phi(x)$ has an inverse function
$\Phi^{\leftarrow}(x)$.  The condition (\ref{eq:propo1A}) implies
that, for $0<\epsilon<1$,  there exists $x_{\epsilon}$, such that
for $x>x_{\epsilon}$,
$$(1-\epsilon) \bar{F}^{-1}(x) \leq
\Phi( \bar{G}^{-1}(x))\leq (1+\epsilon) \bar{F}^{-1}(x),$$ and thus,
by choosing $x_{\epsilon} > x_0$, we obtain
\begin{equation}\label{eq:equivBound2}
\Phi^{\leftarrow}\left( (1-\epsilon) \bar{F}^{-1}(x) \right) \leq
\bar{G}^{-1}(x)\leq  \Phi^{\leftarrow} \left((1+\epsilon)
\bar{F}^{-1}(x) \right).
\end{equation}
First, we will prove the \emph{upper bound}. Recalling
(\ref{eq:rep}), noting that $V\eqdef \bar{F}(L)$ is a uniform random
variable
 on $(0,1)$ (e.g., see Proposition 2.1 in
Chapter 10 of \cite{Ross02}) and using (\ref{eq:equivBound2}),  we
obtain, for large $n$,
\begin{align}\label{eq:propupper1}
  \Pr[N>n]& = \expect[(1-\bar{G}(L))^n] \nonumber\\
  &= \expect[(1-\bar{G}(L))^n
  \ind(L> x_{\epsilon})]  +\expect[\left(1-\bar{G}(L)\right)^n
  \ind(L \leq x_{\epsilon})] \nonumber\\
  &\leq \expect \left[  e^{- \frac{n}{\Phi^{\leftarrow}\left((1+\epsilon)V^{-1}\right)} } \right] +
  \left(1-\bar{G}(x_{\epsilon}) \right)^n  \nonumber\\
  & \leq \Pr\left[0\leq  \frac{n}{\Phi^{\leftarrow}\left((1+\epsilon)V^{-1}\right)} \leq 1 \right]
      + \sum_{k=0}^{\lceil \log (\epsilon n) \rceil} e^{-e^{k}}
        \Pr\left[ e^k \leq  \frac{n}{\Phi^{\leftarrow}\left((1+\epsilon)V^{-1}\right)} \leq e^{k+1} \right]
      \nonumber\\
      &\;\;\; + e^{-e^{\lceil \log (\epsilon n) \rceil+1}}
        \Pr\left[ \frac{n}{\Phi^{\leftarrow}\left((1+\epsilon)V^{-1}\right)} > e^{\lceil \log (\epsilon n) \rceil+1} \right]  +
  \left(1-\bar{G}(x_{\epsilon}) \right)^n  \nonumber\\
  &\leq  \frac{1+\epsilon}{\Phi\left(n\right)}
   + \sum_{k=0}^{\lceil \log (\epsilon n) \rceil} e^{-e^{k}}
          \frac{1+\epsilon}{\Phi \left(  \frac{n}{e^{k+1}} \right)}
      + e^{-\epsilon n}  +
  \left(1-\bar{G}(x_{\epsilon}) \right)^n.
\end{align}
The condition (\ref{eq:dominantRV}) implies that there exist finite
$n_d$ and $d$, such that for  $n>n_d$,
  $$ \frac{\Phi(n)}{\Phi(n/e)} < d,$$
resulting in, for all $k$ satisfying $n/e^{k} > n_d$,
\begin{equation}\label{eq:prop1c}
     \frac{ \Phi \left( n \right) }{\Phi \left(  \frac{n}{e^{k+1}} \right)} \leq
  d^{k+2},
  \end{equation}
  and therefore,
  $$ \Phi(n) \leq \Phi(n_d) d^{\log \left(\frac{n}{n_d} \right)+1},$$
  which, in conjunction with (\ref{eq:propupper1}), yields
\begin{align}\label{eq:prop1b}
 \varlimsup_{n \to \infty} \Pr[N>n] \Phi(n) &\leq  1+\epsilon
   + \sum_{k=0}^{\infty}(1+\epsilon) e^{-e^{k}}
          d^{k+2} \nonumber \\
 & \;\;\;\; + \varlimsup_{n \to \infty} \left ( e^{-\epsilon n}  +
  \left(1-\bar{G}(x_{\epsilon}) \right)^n \right) \Phi(n_d) d^{\log
  \left(\frac{n}{n_d} \right)+1} \nonumber\\
  & = 1+\epsilon
   + \sum_{k=0}^{\infty}(1+\epsilon) e^{-e^{k}}
          d^{k+2}<\infty.
\end{align}

Next, we prove the \emph{lower bound}. Recalling
(\ref{eq:equivBound2}) and choose $n>x_0$, we obtain
\begin{align}
 \Pr[N>n]& = \expect[(1-\bar{G}(L))^n] \nonumber\\
 &\geq  \left( 1- \frac{1}{n} \right)^n \Pr \left[\bar{G}(L) \leq \frac{1}{n}
 \right] \nonumber\\
 &\geq  \left( 1- \frac{1}{n} \right)^n \Pr \left[\Phi^{\leftarrow} \left((1-\epsilon) \bar{F}^{-1}(L)
\right) \geq  n
 \right] \nonumber\\
 &\geq \left( 1- \frac{1}{n} \right)^n \frac{1-\epsilon}{\Phi(n)} \; ,
 \nonumber
\end{align}
implying
\begin{align}
 \varliminf_{n \to \infty} \Pr[N>n] \Phi(n)  &\geq  \lim_{n \to \infty}\left( 1- \frac{1}{n} \right)^n
 (1-\epsilon)=e^{-1}(1-\epsilon),\nonumber
\end{align}
which, in conjunction with (\ref{eq:prop1b}), proves the
proposition.\qed
\end{proof}

\begin{proof}[of Proposition \ref{eq:proposition2}]
 Recalling (\ref{eq:equivBound2}) and
choosing $n$ large enough such that $\{\bar{G}(L)\leq e/n \}
\subseteq \{L>x_{\epsilon}\}$ with $x_{\epsilon}$ being the same as
chosen in (\ref{eq:equivBound2}), we obtain
\begin{align}
 \Pr[N>n]& = \expect[(1-\bar{G}(L))^n] \nonumber\\
 &\geq  \left( 1- \frac{e}{n} \right)^n \Pr \left[\bar{G}(L) \leq \frac{e}{n}
 \right] \nonumber\\
 &\geq  \left( 1- \frac{e}{n} \right)^n \Pr \left[\Phi^{\leftarrow} \left((1-\epsilon) \bar{F}^{-1}(L)
\right) \geq  \frac{n}{e}
 \right] \nonumber\\
 &\geq \left( 1- \frac{e}{n} \right)^n \frac{1-\epsilon}{\Phi \left(\frac{n}{e}\right)} \; ,
 \nonumber
\end{align}
implying
\begin{align}
 \varliminf_{n \to \infty} \Pr[N>n] \Phi(n)  &\geq  \varliminf_{n \to \infty}\left( 1- \frac{1}{n} \right)^n
 \frac{(1-\epsilon)\Phi(n)}{\Phi \left(\frac{n}{e}\right)}= \infty,\nonumber
\end{align}
which completes the proof. \qed
\end{proof}


%
%

\begin{proof}[of Theorem \ref{theorem:asympN2}]
 We begin with proving (\ref{eq:asympNa}). Without loss of generality,  we can assume
that $\Phi(x)$ is absolutely continuous and strictly monotone since,
by Proposition 1.5.8 of \cite{BG87}, one can always find an
absolutely continuous and strictly monotone function
\begin{equation}\label{eq:modified}
\Phi^{\ast}(x)= \alpha \int_{1}^{x}\Phi(s)s^{-1}ds, \; x\geq 1,
\end{equation}
which satisfies
$$\bar{F}^{-1}(x) \thicksim \Phi\left( \bar{G}^{-1}(x)\right)
  \thicksim \Phi^{\ast}\left( \bar{G}^{-1}(x)\right).$$

First, we prove the \emph{upper bound}. Recalling (\ref{eq:rep}),
noting that $V\eqdef \bar{F}(L)$ is a uniform random variable on
$(0,1)$ and using (\ref{eq:equivBound2}),  we obtain
\begin{align}
  \Pr[N>n]& = \expect[(1-\bar{G}(L))^n] \nonumber\\
  &= \expect[(1-\bar{G}(L))^n
  \ind(L \geq x_{\epsilon})]  +\expect[\left(1-\bar{G}(L)\right)^n
  \ind(L < x_{\epsilon})] \nonumber\\
  &\leq \expect \left[  e^{- \frac{n}{\Phi^{(-1)}\left((1+\epsilon) V^{-1}\right)} }
               \right] +
  \left(1-\bar{G}(x_{\epsilon}) \right)^n.
  \end{align}
 Then, by choosing integers $m$ and $n_d$ (as in (\ref{eq:prop1c})) and
  noting that $\Phi(n)$ is regularly varying,  the preceding inequality yields, for
  large $n$,
  \begin{align}
  \Pr[N>n] & \leq \expect\left[ e^{- \frac{n}{\Phi^{(-1)}\left((1+\epsilon) V^{-1}\right)} }
    \ind \left( 0\leq  \frac{n}{\Phi^{\leftarrow}\left((1+\epsilon)V^{-1}\right)} \leq e^m \right) \right]
      \nonumber\\
      &\;\;\;\;\; + \sum_{k=m}^{\log \left( \frac{n}{n_d} \right)-1} e^{-e^{k}}
        \Pr\left[ e^k \leq  \frac{n}{\Phi^{\leftarrow}\left((1+\epsilon)V^{-1}\right)} \leq e^{k+1} \right]
             + o\left( \frac{1}{\Phi(n)} \right)  \nonumber\\
  &\leq \int_{0}^{e^m} e^{-z} \left(\frac{\Phi'\left( n/z \right)}
                          {\Phi^2\left(n/z \right)}
        \frac{(1+\epsilon)n}{z^2} \right) dz
   + \sum_{k=m}^{\log \left( \frac{n}{n_d} \right)-1} e^{-e^{k}}
          \frac{1+\epsilon}{\Phi \left(  \frac{n}{e^{k+1}} \right)}
      + o\left( \frac{1}{\Phi(n)} \right), \nonumber
 \end{align}
resulting in
\begin{align}\label{eq:propupper5}
\Pr[N>n]\Phi(n)& \leq  \int_{0}^{e^m} \frac{\Phi(n)}
                          {\Phi\left(n/z \right)} \frac{\Phi'\left( n/z \right)}
                          {\Phi\left(n/z \right)}
        \frac{ e^{-z}(1+\epsilon)n}{z^2}dz \nonumber \\
        &\;\;\;\;\; +
          \sum_{k=m}^{\log \left( \frac{n}{n_d} \right)-1} (1+\epsilon)e^{-e^{k}}
          \frac{\Phi(n)}{\Phi \left(  \frac{n}{e^{k+1}} \right)}
          + \Phi(n) \left(1-\bar{G}(x_{\epsilon})
        \right)^n\nonumber\\
        &\eqdef I_1+I_2+I_3.
\end{align}
Since regularly varying functions are also dominantly regularly
varying, the bound in (\ref{eq:prop1c}) implies
\begin{equation}\label{eq:propupper6} I_2 \leq  \sum_{k=m}^{\log
\left( \frac{n}{n_d} \right)-1} (1+\epsilon)e^{-e^{k}} d^{k+2} \leq
\sum_{k=m}^{\infty} (1+\epsilon)e^{-e^{k}} d^{k+2}<\infty.
\end{equation}
For $I_1$, since $\Phi(n)$ is regularly varying, by the
Characterisation Theorem of regular variation (e.g., see Theorem
1.4.1 of
 \cite{BG87}) and the uniform convergence theorem of slowly varying
 functions (Theorem 1.2.1 of \cite{BG87}),  it is easy to
obtain uniformly for $0\leq z\leq e^m$, as $n\to \infty$,
$$
  \frac{\Phi(n)}
                          {\Phi\left(n/z \right)}
                          \sim
                         z^{\alpha}
$$
and, recalling (\ref{eq:modified}),
$$
\frac{\Phi'\left( n/z \right)}
                          {\Phi\left(n/z \right)}
                          = \frac{z\alpha}{n},
$$
which implies
\begin{equation}\label{eq:propupper5I1}
I_1 \sim \int_{0}^{e^m}(1+\epsilon)\alpha e^{-z} z^{\alpha-1}dz.
\end{equation}
Furthermore,  $\Phi(n)$ being regularly varying implies that $I_3
\to 0$ as $n \to \infty$. Thus, passing $n \to \infty $ in
(\ref{eq:propupper5}), recalling (\ref{eq:propupper6}) and then
passing $m \to \infty$, $\epsilon \to 0$, we obtain
  \begin{equation}\label{eq:asympN5}
   \Pr[N>n]\Phi(n) \lesssim \int_{0}^{\infty}\alpha e^{-z}
z^{\alpha-1}dz=\Gamma(\alpha+1).
\end{equation}

As for the \emph{lower bound}, the proof follows similar arguments,
and the details  are presented in Subsection \ref{ss:continuation}.
The same subsection also contains the proof of the statement ii) of
the theorem. \qed
\end{proof}

The condition of $\Phi(\cdot)$ being dominantly varying is basically
necessary in order for $\Pr[N>n] \approx \Phi(n)^{-1}$ to hold.  As
shown in Proposition \ref{prop:gtrOne}, this tail equivalence
basically does not hold if $\Phi(\cdot)$ is not dominantly varying,
e.g., if $\Phi(\cdot)^{-1}$ is lognormal. Here, we further
characterize the behavior of the lognormal type distributions in the
following proposition.
\begin{proposition}\label{prop:gtrOne}
If $\log(\Phi(x))= \lambda (\log x)^{\delta}$, $\delta>1,
\lambda>0$, then, under the condition (\ref{eq:propo1A}), we obtain
$$
  \lim_{n \to \infty} \frac{\log \left(\Pr[N>n]^{-1}\right) - \log(\Phi(x))}
     {(\log \log n)(\log n)^{\delta-1}} = -\lambda \delta(\delta-1).
$$
\end{proposition}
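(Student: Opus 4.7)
The plan is to analyze $\Pr[N>n] = \expect[(1-\bar{G}(L))^n]$ by a Laplace/saddle-point estimate, in the same spirit as the proofs of Proposition \ref{eq:proposition1} and Theorem \ref{theorem:asympN2}. Under assumption (\ref{eq:propo1A}) with the explicit $\log \Phi(x) = \lambda (\log x)^\delta$, one has the inverse
\[
\Phi^{\leftarrow}(y) = \exp\!\big( (\lambda^{-1}\log y)^{1/\delta}\big),
\]
so that (\ref{eq:equivBound2}) yields $\bar{G}(L)\sim 1/\Phi^{\leftarrow}(1/V)$, where $V\eqdef \bar{F}(L)$ is uniform on $(0,1)$. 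Passing to the unit exponential $u=-\log V$ and then $s=(u/\lambda)^{1/\delta}$ reduces the relevant expectation, up to $1+o(1)$ factors coming from (\ref{eq:propo1A}), to the Laplace-type integral
\[
\int_0^\infty \exp\!\big(-n e^{-s} - \lambda s^\delta\big)\,\lambda\delta s^{\delta-1}\,ds.
\]

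Next I would carry out a saddle-point analysis of $\psi(s)\eqdef n e^{-s}+\lambda s^\delta$. The stationary equation $n e^{-s}=\lambda\delta s^{\delta-1}$ has a unique interior minimizer $s^\ast$ admitting, by iterative inversion of $s=\log n-\log(\lambda\delta)-(\delta-1)\log s$, the expansion
\[
s^\ast = \log n - (\delta-1)\log\log n + O(1).
\]
Substituting back and expanding $(s^\ast)^\delta$ by the binomial series yields
\[
\psi(s^\ast) = \lambda(\log n)^\delta - \lambda\delta(\delta-1)(\log n)^{\delta-1}\log\log n + O\!\big((\log n)^{\delta-1}\big),
\]
so that after subtracting $\log \Phi(n)=\lambda(\log n)^\delta$ and dividing by $(\log\log n)(\log n)^{\delta-1}$ the claimed limit $-\lambda\delta(\delta-1)$ emerges.

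The rigorous matching upper and lower bounds follow the template of Theorem \ref{theorem:asympN2}. For the \emph{upper bound} I would use $(1-\bar{G}(L))^n\le\exp(-n\bar{G}(L))$ to pass to the Laplace integral above and dominate it by $e^{-\psi(s^\ast)}$ times a slowly growing prefactor arising from $\psi''(s^\ast)^{-1/2}$ and from the $(1+\epsilon)$ slack in (\ref{eq:equivBound2}); these prefactors contribute only $O((\log n)^{\delta-1})$ to $-\log\Pr[N>n]$. For the \emph{lower bound} I would localise the expectation to a window of width $O(1)$ around $s^\ast$, use $1-x\ge \exp(-x/(1-x))$ to control $(1-\bar{G}(L))^n$ from below on that window, and estimate the probability of the window by the $e^{-\lambda s^\delta}$ density factor of $s$.

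The main obstacle will be achieving the required two-term accuracy. The leading-order statement $-\log\Pr[N>n]\sim \lambda(\log n)^\delta$ already follows from Proposition \ref{prop:slowVarying} and Theorem \ref{theorem:1I}; the novelty here is isolating the precise coefficient $\lambda\delta(\delta-1)$ of the correction $(\log n)^{\delta-1}\log\log n$. This forces us to (i) locate $s^\ast$ to precision $o(1)$, not just leading order, (ii) propagate the $(1\pm\epsilon)$ slack in (\ref{eq:equivBound2}) through the exponent so that it produces only an additive $O((\log n)^{\delta-1})$ error rather than a multiplicative one, and (iii) verify that the pre-exponential factor $\lambda\delta s^{\delta-1}$ together with the Laplace Gaussian correction contributes only $O(\log\log n)$ to $-\log\Pr[N>n]$, hence strictly below the target scale $(\log n)^{\delta-1}\log\log n$.
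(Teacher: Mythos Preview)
Your proposal is correct and is essentially the paper's argument in a different coordinate: the paper works with the discrete decomposition $\Pr[N>n]\le\sum_k e^{-k}\Pr[k\le n/\Phi^{\leftarrow}((1+\epsilon)V^{-1})\le k+1]+e^{-y}$ and optimizes $f(x)=x+\lambda(\log n-\log x)^\delta$, while you pass to the continuous Laplace integral and optimize $\psi(s)=ne^{-s}+\lambda s^\delta$; these are the same function under $x=ne^{-s}$, and both proofs locate the minimizer to precision $O(1)$ (the paper takes $x_0=\lambda\delta(\log n)^{\delta-1}$ for the lower bound and $x_1=x_0-(1-\epsilon)\lambda\delta(\delta-1)^2(\log\log n)(\log n)^{\delta-2}$ for the upper) and then expand. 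One small correction to your commentary: Proposition~\ref{prop:slowVarying} gives only $\log\Pr[N>n]^{-1}=o(n^\epsilon)$, not the leading order; the first-order statement you cite is Theorem~\ref{theorem:1I} (via Corollary~\ref{corollary:1}).
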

The \textbf{proof} of this proposition is presented in Subsection
\ref{ss:gtrOne}.
\begin{remark}
In Proposition \ref{prop:gtrOne}, it can be easily verified that
$\Phi(\cdot)$ is not dominantly regularly varying, and therefore,
according to Propositions \ref{eq:proposition1} and
\ref{eq:proposition2}, we know $\Pr[N>n]\Phi(x) \to \infty$ as $n\to
\infty$. However, Proposition \ref{prop:gtrOne} further characterize
how fast $\Pr[N>n]\Phi(n)$ goes to infinity in the logarithmic
scale, which also implies a weaker result
$$
   \lim_{n \rightarrow \infty}\frac{ \log \left(\Pr[N>n]^{-1}\right)}{ \log(\Phi(n))}=1.
$$
In the following theorem we extend the preceding logarithmic limit
under a more general condition on $\Phi(\cdot)$.
\end{remark}

\begin{theorem}\label{theorem:1I}
  If an eventually non-decreasing function $\Phi(x)\eqdef e^{l(x)}$ satisfies (\ref{eq:conditionI2})
  where $l(x)$ is slowly varying with
 \begin{equation}\label{eq:lx}
  \lim_{x \to
  \infty} \frac{l\left( \frac{x}{l(x)} \right)}{l(x)} = 1,
  \end{equation}
    then,
   \begin{equation}\label{eq:NI}
   \lim_{n \rightarrow \infty}\frac{ \log \left(\Pr[N>n]^{-1}\right)}{ \log\Phi(
   n)}=1.
   \end{equation}
\end{theorem}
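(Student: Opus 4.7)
The plan is to sandwich $\log \Pr[N>n]^{-1}$ between $(1 \pm o(1)) \log \Phi(n)$ by using the identity $\Pr[N>n] = \expect[(1-\bar{G}(L))^n]$ from (\ref{eq:rep}) together with a single truncation of the expectation at a level $M_n$ chosen to balance a deterministic tail against a probability tail. Throughout, I rewrite hypothesis (\ref{eq:conditionI2}) as $-\log \bar{F}(x) \thicksim l(1/\bar{G}(x))$ as $x \to \infty$; this is the mechanism that transfers information about the size of $L$ (measured on the scale of $\bar{F}$) into information about $\bar{G}(L)$, which is what actually appears inside the expectation.

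For the upper bound, I use $(1-x)^n \leq e^{-nx}$ to get, for any $M_n > 0$,
\begin{equation*}
 \Pr[N>n] \leq \expect[e^{-n\bar{G}(L)}] \leq e^{-M_n} + \Pr[\bar{G}(L) \leq M_n/n] = e^{-M_n} + \bar{F}\bigl(\bar{G}^{\leftarrow}(M_n/n)\bigr).
\end{equation*}
Given $\epsilon \in (0,1)$, I choose $M_n \eqdef (1-\epsilon)\, l(n)$, which makes the first term equal to $\Phi(n)^{-(1-\epsilon)}$. Applying (\ref{eq:conditionI2}) at $x = \bar{G}^{\leftarrow}(M_n/n)$, where $1/\bar{G}(x) = n/M_n$, yields
\begin{equation*}
  -\log \bar{F}\bigl(\bar{G}^{\leftarrow}(M_n/n)\bigr) \thicksim l\bigl(n/((1-\epsilon)\, l(n))\bigr).
\end{equation*}
Hypothesis (\ref{eq:lx}) forces $l(n/l(n)) \thicksim l(n)$, and the uniform convergence theorem for slowly varying $l$ (Theorem 1.2.1 of \cite{BG87}) promotes this to $l(n/(c\, l(n))) \thicksim l(n)$ for every fixed $c>0$. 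Hence the probability tail is also $\Phi(n)^{-1+o(1)}$, the two pieces balance, and $\Pr[N>n] \leq 2\,\Phi(n)^{-(1-\epsilon)}$ for all large $n$. Taking logarithms and then letting $\epsilon \downarrow 0$ gives $\varliminf_n \log \Pr[N>n]^{-1}/\log \Phi(n) \geq 1$.

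For the lower bound, I discard everything except the event $\{\bar{G}(L)\leq 1/n\}$:
\begin{equation*}
  \Pr[N>n] \geq (1-1/n)^n\, \Pr\bigl[\bar{G}(L)\leq 1/n\bigr] \geq e^{-2}\, \bar{F}\bigl(\bar{G}^{\leftarrow}(1/n)\bigr)
\end{equation*}
for $n$ large. Applying (\ref{eq:conditionI2}) at $x=\bar{G}^{\leftarrow}(1/n)$, so $1/\bar{G}(x)=n$, yields $-\log \bar{F}(\bar{G}^{\leftarrow}(1/n))\thicksim l(n)=\log \Phi(n)$. This gives $\log \Pr[N>n]^{-1} \leq (1+o(1))\log \Phi(n) + 2$, i.e.\ $\varlimsup_n \log \Pr[N>n]^{-1}/\log \Phi(n) \leq 1$, which combined with the upper bound yields (\ref{eq:NI}).

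The main obstacle is the upper bound, specifically that one is forced to evaluate $l$ at the argument $n/((1-\epsilon) l(n))$ rather than at $n$: the deterministic tail $e^{-M_n}$ drives $M_n$ up to the scale of $l(n)$, which pushes the argument of $l$ inside the probability tail down by a slowly varying factor. For a generic slowly varying $l$ the ratio $l(n/l(n))/l(n)$ need not tend to $1$ (pathological examples exist, as in Section~2.3 of \cite{BG87}), and without this property the two tails cannot be matched at the same logarithmic rate in $\Phi(n)$. Condition (\ref{eq:lx}) is exactly the mild de~Bruijn conjugate-type regularity needed to close the gap; it holds in all standard examples such as $l(x)=(\log x)^\beta$ and is precisely what separates the regime of Theorem \ref{theorem:1I} from the finer behavior characterized in Proposition \ref{prop:betweenHalfOne}.
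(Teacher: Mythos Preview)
Your proof is correct and follows essentially the same strategy as the paper's: the lower bound is identical, and for the upper bound both arguments truncate $\expect[e^{-n\bar G(L)}]$ at a level of order $l(n)$ and then invoke (\ref{eq:lx}) to show that the resulting shifted argument $n/l(n)$ does not change $l$ asymptotically. Your single two-way split at $M_n=(1-\epsilon)l(n)$ is a mild streamlining of the paper's level-by-level sum $\sum_{k=0}^{\lceil l(n)\rceil-1}e^{-k}\Pr[k\le n/\Phi^{\leftarrow}(V^{-(1+\epsilon)})\le k+1]$, but the mechanism---and the place where (\ref{eq:lx}) enters---is the same.
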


\begin{remark}\label{remark:1}
Note that if $\log (\Phi(x)) = e^{(\log x)^{\delta}}$ then the
condition (\ref{eq:lx}) is satisfied if $0<\delta<1/2$ and it does
not hold if $\delta \geq 1/2$, which can be easily verified.
Furthermore, if $\log (\Phi(x)) = \Psi(\log x)$ where $\Psi(x)$ is
regularly varying, e.g., $\Phi(x)^{-1}$ being lognormal, then the
condition (\ref{eq:lx}) also holds, which is stated in the following
corollary.
\end{remark}

\begin{remark}[Lattice variables]\label{remark:lattice2}
When $L$ is lattice valued,  it is easy to see from the proof of
Theorem \ref{theorem:1I} that, if there exists a continuous random
variable $L^{\ast}$ such that $\log \Pr[L^{\ast}>x] \sim \log
\Pr[L>x]$ as $x \to \infty$, or equivalently, if there exists a
continuous negative non-increasing function $q(x)$ such that $\log
\Pr[L>x] \sim q(x)$, then Theorem \ref{theorem:1I} still holds,
e.g., when $L$ has a geometric or Poisson distribution. To
rigorously prove this claim, one can use similar arguments as in the
proof of Theorem \ref{theorem:tamdem} in Section \ref{s:APP} of this
paper. Note that this remark also applies to other logarithmic
asymptotics, e.g., see Corollary \ref{corollary:1}, Propositions
\ref{prop:betweenHalfOne} and \ref{theorem:nearlyB}, and Theorems
\ref{theorem:2I}, \ref{theorem:nearly}, \ref{theorem:T} and
\ref{theorem:TWeibull}.
\end{remark}

\begin{corollary}\label{corollary:1}
  If  a regularly varying function $\Psi(\cdot)$ with a non-negative index  
satisfies
\begin{equation}
   \lim_{x\rightarrow \infty}\frac{\log \bar{F}(x)^{-1}}{\Psi \left(\log
   \bar{G}(x)^{-1} \right)}=1 \nonumber
   \end{equation}
 and, in addition,  is eventually non-decreasing when $\Psi(\cdot)$ is slowly varying,   then, we have
   \begin{equation}
   \lim_{n \rightarrow \infty}\frac{ \log \left(\Pr[N>n]^{-1}\right)}{ \Psi(\log
   n)}=1. \nonumber
   \end{equation}
\end{corollary}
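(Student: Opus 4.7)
The plan is to reduce Corollary~\ref{corollary:1} directly to Theorem~\ref{theorem:1I} by taking $\Phi(x) \eqdef e^{\Psi(\log x)}$, so that $l(x) = \log \Phi(x) = \Psi(\log x)$. With this choice, $\log \Phi(\bar{G}(x)^{-1}) = \Psi(\log \bar{G}(x)^{-1})$, and the hypothesis of the corollary is exactly condition~(\ref{eq:conditionI2}). Moreover, the desired conclusion $\log \Phi(n) = \Psi(\log n)$ matches (\ref{eq:NI}). So the only work is to check the three regularity requirements on $\Phi$ in Theorem~\ref{theorem:1I}: that $\Phi$ is eventually non-decreasing, that $l$ is slowly varying, and that the self-similar condition (\ref{eq:lx}) holds.

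First, I would handle monotonicity. If $\Psi$ is regularly varying with positive index, then $\Psi(\log x)$ is eventually non-decreasing automatically; in the slowly varying case, the corollary's hypothesis explicitly assumes $\Psi$ is eventually non-decreasing, so again $\Phi$ is eventually non-decreasing. Next, $l(x) = \Psi(\log x)$ is slowly varying since for any $c>0$, $\log(cx) = \log x + \log c \sim \log x$, and by the uniform convergence theorem for regularly varying functions $\Psi(\log x + \log c)/\Psi(\log x) \to 1$.

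The main (but still short) step is checking (\ref{eq:lx}). Here I would use the standard fact (Proposition~1.3.6 of Bingham-Goldie-Teugels) that any slowly varying $l$ satisfies $l(x) = o(x^\epsilon)$ for every $\epsilon > 0$, which forces $\log l(x)/\log x \to 0$. Consequently,
\begin{equation*}
\log\!\left(\frac{x}{l(x)}\right) = \log x - \log l(x) = (\log x)\bigl(1 + o(1)\bigr),
\end{equation*}
and by the uniform convergence theorem applied to $\Psi$,
\begin{equation*}
\frac{l(x/l(x))}{l(x)} = \frac{\Psi(\log x - \log l(x))}{\Psi(\log x)} \longrightarrow 1 \quad \text{as } x \to \infty,
\end{equation*}
which is exactly (\ref{eq:lx}).

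Having verified all hypotheses, I would then apply Theorem~\ref{theorem:1I} to conclude that $\log(\Pr[N>n]^{-1})/\log \Phi(n) \to 1$, i.e., $\log(\Pr[N>n]^{-1})/\Psi(\log n) \to 1$, as desired. The only subtle point is the reliance on $\log l(x) = o(\log x)$ for slowly varying $l$; everything else is a direct translation of notation.
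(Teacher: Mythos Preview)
Your proposal is correct and follows essentially the same route as the paper: set $l(x)=\Psi(\log x)$, verify condition~(\ref{eq:lx}) via $\log l(x)=o(\log x)$ so that $\log(x/l(x))\sim\log x$, and apply Theorem~\ref{theorem:1I}. The only minor imprecision is the claim that $\Psi(\log x)$ is \emph{automatically} eventually non-decreasing when $\Psi$ has positive index---regular variation alone does not guarantee monotonicity---but this is harmless since one may pass to an asymptotically equivalent monotone version as in~(\ref{eq:modified}), and the paper's own proof glosses over this point as well.
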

\begin{remark}
This result, or more precisely Theorem \ref{theorem:T} in Subsection
\ref{ss:asmpT}, implies parts (1:1), (2:1) and (2:2) of Theorem 2.1
in \cite{Asmussen07} and extends Theorem 2 in \cite{PJ07RETRANS}.
\end{remark}
\begin{proof}[of Corollary \ref{corollary:1}]
 For a regularly varying function $\Psi(\cdot)$,  it is easy to verify that
    $l(x) = \Psi (\log (x))$ satisfies
     \begin{equation}
  \lim_{x \to
  \infty} \frac{l\left( \frac{x}{l(x)} \right)}{l(x)} =
   \lim_{x \to \infty}\frac{\Psi \left(\log x - \log \Psi (\log(x)) \right)}{ \Psi (\log(x))}=1, \nonumber
  \end{equation}
  and therefore, by Theorem \ref{theorem:1I}, we prove the
  corollary.
   \qed
    \end{proof}
\begin{remark}
Note that, in conjunction with Remark \ref{remark:1}, the condition
(\ref{eq:lx}) is close to necessary since the result (\ref{eq:NI})
does not hold if $\log ( \Phi(x)) = e^{(\log x)^{\delta}} $,
$1/2<\delta<1$, as can be seen from the following proposition.
\end{remark}

\begin{proposition}\label{prop:betweenHalfOne}
If $\log (\Phi(x)) =e^{\lambda (\log x)^{\delta}}$, $1/2<\delta<1$,
$\lambda>0$, then, under the condition (\ref{eq:conditionI2}), we
obtain
$$
  \log \left(\log \left(\Pr[N>n]^{-1}\right)\right) - \log \left(\log (\Phi(x)) \right) \sim - \delta \lambda^2 (\log
  n)^{2\delta-1}.
$$
\end{proposition}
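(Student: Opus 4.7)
The plan is to mirror the machinery developed in the proofs of Proposition~\ref{prop:gtrOne} and Theorem~\ref{theorem:1I}, but to carry the Laplace-type expansion of the governing integral one order further so as to resolve the subleading correction at scale $(\log n)^{2\delta-1}$. First, inverting gives $\log \Phi^{\leftarrow}(y) \sim ((\log\log y)/\lambda)^{1/\delta}$ as $y\to\infty$. Using $V\eqdef \bar F(L)$ uniform on $(0,1)$ and sharpening (\ref{eq:conditionI2}) into a pointwise two-sided bound of the form $(1-\epsilon)\log\Phi(\bar G(L)^{-1}) \le \log V^{-1} \le (1+\epsilon)\log\Phi(\bar G(L)^{-1})$ on the event $\{L>x_{\epsilon}\}$, and using $(1-\bar G(L))^{n} = \exp(-n\bar G(L))(1+o(1))$ on this event, I would reduce the problem, up to $(1\pm\epsilon)$ distortions that will be sent to $0$ after $n\to\infty$, to estimating
\begin{equation*}
J(n)\eqdef \expect\!\left[\exp\!\bigl(-n/\Phi^{\leftarrow}(V^{-1})\bigr)\right].
\end{equation*}

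After the substitution $v=((\log\log V^{-1})/\lambda)^{1/\delta}$, which sends $-\log V$ to $e^{\lambda v^{\delta}}$ and $\Phi^{\leftarrow}(V^{-1})$ to $e^{v}$, the integral becomes (up to a negligible tail from $V>1/e$)
\begin{equation*}
\int_{0}^{\infty} \exp\!\bigl(-n e^{-v} - e^{\lambda v^{\delta}} + \lambda v^{\delta}\bigr)\,\lambda\delta\,v^{\delta-1}\,dv,
\end{equation*}
to which I would apply Laplace's method. The saddle point $v^{*}$ satisfies $\log n = v^{*} + \lambda (v^{*})^{\delta} + (\delta-1)\log v^{*} + O(1)$; since $\delta<1$, the linear term dominates and $v^{*}\sim \log n$. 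At the saddle, the identity $n e^{-v^{*}} \asymp \lambda\delta(v^{*})^{\delta-1} e^{\lambda(v^{*})^{\delta}} = o\bigl(e^{\lambda(v^{*})^{\delta}}\bigr)$ forces $-g(v^{*})\sim e^{\lambda(v^{*})^{\delta}}$, so $\log\log \Pr[N>n]^{-1} \sim \lambda(v^{*})^{\delta}$, the polynomial Gaussian-fluctuation factor around the saddle contributing only $O(\log\log n)$.

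Iterating the saddle equation gives $v^{*} = \log n - \lambda(\log n)^{\delta} + o((\log n)^{\delta})$, and a binomial expansion yields
\begin{equation*}
\lambda (v^{*})^{\delta} = \lambda(\log n)^{\delta}\bigl(1 - \lambda(\log n)^{\delta-1} + o((\log n)^{\delta-1})\bigr)^{\delta} = \lambda(\log n)^{\delta} - \delta\lambda^{2}(\log n)^{2\delta-1} + o\bigl((\log n)^{2\delta-1}\bigr),
\end{equation*}
in which the hypothesis $\delta>1/2$ is exactly what makes this correction unbounded and therefore resolvable in the stated scale. Subtracting $\log\log\Phi(n)=\lambda(\log n)^{\delta}$ yields the claim. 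The main obstacle is controlling the propagation of the $(1\pm\epsilon)$ slack inherited from the merely logarithmic hypothesis (\ref{eq:conditionI2}) through the \emph{second-order} saddle expansion: that slack shifts the effective saddle point by as much as $O(\epsilon\lambda(\log n)^{\delta})$, which contaminates $\lambda(v^{*})^{\delta}$ by an $O(\epsilon\,\delta\lambda^{2}(\log n)^{2\delta-1})$ term, vanishing only after letting $\epsilon\downarrow 0$ \emph{after} $n\to\infty$. Matching upper and lower bounds therefore requires showing that this $\epsilon$-induced perturbation enters linearly in $\epsilon$ at the $(\log n)^{2\delta-1}$ scale in both directions, which is the technically delicate step absent from the leading-order arguments used for Theorem~\ref{theorem:1I}.
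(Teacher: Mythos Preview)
Your approach is correct and closely parallels the paper's argument, though you package the optimization as a continuous Laplace saddle-point analysis whereas the paper carries it out by hand via explicit threshold choices. Specifically, the paper bounds $\Pr[N>n]$ from below by $e^{-x_0}\,\Phi\!\bigl((1+\epsilon)n/x_0\bigr)^{-1/(1-\epsilon)}$ for a single cutoff $x_0$ (as in the lower-bound machinery of Theorem~\ref{theorem:2I}), and from above by the discretized sum $\sum_{k\le y-1} e^{-k - (1+\epsilon)^{-1}e^{\lambda(\log n - \log k)^{\delta}}} + e^{-y}$ (as in Theorem~\ref{theorem:1I}); it then chooses $x_0 = \exp\bigl(\lambda(\log n)^{\delta}(1-\delta\lambda(\log n)^{\delta-1})\bigr)$ and $y = \exp\bigl(\lambda(\log n)^{\delta}(1-(1+\epsilon)\delta\lambda(\log n)^{\delta-1})\bigr)$, verifying that $f(x)=x+(1+\epsilon)^{-1}e^{\lambda(\log n-\log x)^{\delta}}$ has $f'<0$ on $(0,y]$ so that the sum is at most $(y+1)e^{-y}$. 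Your saddle $v^{*}$ corresponds to these thresholds under the identification $x_0\approx ne^{-v^{*}}$: since $v^{*}=\log n - \lambda(\log n)^{\delta}+o((\log n)^{\delta})$, one has $ne^{-v^{*}}\approx \exp\bigl(\lambda(\log n)^{\delta}-\delta\lambda^{2}(\log n)^{2\delta-1}\bigr)$, which is precisely the paper's $x_0$. So the two arguments are the same optimization in different coordinates; the paper's version is more elementary (no Laplace remainder to justify) but requires guessing the right $x_0,y$, while yours derives them systematically from the saddle equation.

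One minor correction: your estimate of the $\epsilon$-induced perturbation is too pessimistic. The slack $V^{-1}\mapsto V^{-(1\pm\epsilon)}$ multiplies $e^{\lambda v^{\delta}}$ by $(1\pm\epsilon)^{-1}$ in the exponent of the integrand, which shifts the saddle equation by only $\log(1\pm\epsilon)=O(\epsilon)$; hence $v^{*}$ moves by $O(\epsilon)$ and $\lambda(v^{*})^{\delta}$ by $O(\epsilon(\log n)^{\delta-1})$, not $O(\epsilon(\log n)^{\delta})$. At the saddle, the value $-g(v^{*})$ picks up a further multiplicative $(1\pm\epsilon)^{-1}$, contributing $O(\epsilon)$ additively to $\log(-g(v^{*}))$. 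All of these are $o((\log n)^{2\delta-1})$ for $\delta>1/2$ even before sending $\epsilon\to 0$, so the $\epsilon$-handling is routine rather than delicate. (The paper's $(1+\epsilon)$ factor appearing at the $(\log n)^{2\delta-1}$ scale is an artifact of its slightly conservative choice of $y$, not an intrinsic obstruction.)
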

The \textbf{proof} of this proposition is presented in Subsection
\ref{ss:betweenHalfOne}.
\begin{remark}
Note that this result implies that, for $0<\epsilon<1$ and $n$
large,
  \begin{equation}
 0\leq \frac{ \log \left(\Pr[N>n]^{-1}\right)}{ \log\Phi(
   n)} \leq  e^{- (1-\epsilon)\alpha \lambda^2 (\log
  n)^{2\alpha-1} } \to 0, \nonumber
   \end{equation}
   which contrasts the limit in (\ref{eq:NI}).
\end{remark}

\begin{proof}[of Theorem \ref{theorem:1I}]
Since $\Phi(x)$ is eventually non-decreasing, there exists $x_{0}$
 such that for all $x>x_0$, $\Phi(x)$ has an inverse function
$\Phi^{\leftarrow}(x)$.  The condition (\ref{eq:conditionI2})
implies that, for $0<\epsilon<1$,  there exists $x_{\epsilon}$, such
that for $x>x_{\epsilon}$,
$$ \bar{F}^{-(1-\epsilon)}(x) \leq
\Phi( \bar{G}^{-1}(x))\leq \bar{F}^{-(1+\epsilon)}(x),$$
 thus, choosing $x_{\epsilon} > x_0$, we obtain
\begin{equation}\label{eq:equivBound2I}
\Phi^{\leftarrow}\left(  \bar{F}^{-(1-\epsilon)}(x) \right) \leq
\bar{G}^{-1}(x)\leq \Phi^{\leftarrow} \left(
\bar{F}^{-(1+\epsilon)}(x) \right).
\end{equation}
First, we prove the \emph{upper bound}. Recalling (\ref{eq:rep}),
noting that $V\eqdef \bar{F}(L)$ is a uniform random variable on
$(0,1)$, and using (\ref{eq:equivBound2I}),  we obtain, for integer
$y$ and large $n$,
\begin{align}
  \Pr[N>n]& = \expect[(1-\bar{G}(L))^n] \nonumber\\
  &= \expect[(1-\bar{G}(L))^n
  \ind(L> x_{\epsilon})]  +\expect[\left(1-\bar{G}(L)\right)^n
  \ind(L \leq x_{\epsilon})] \nonumber\\
  &\leq \expect \left[  e^{- \frac{n}{\Phi^{\leftarrow}\left(V^{-(1+\epsilon)}\right)} } \right] +
  \left(1-\bar{G}(x_{\epsilon}) \right)^n  \nonumber\\
  & \leq \sum_{k=0}^{y} e^{-k}
        \Pr\left[ k \leq  \frac{n}{\Phi^{\leftarrow}\left(V^{-(1+\epsilon)}\right)} \leq k+1 \right]
        + e^{-(y+1)}+
  \left(1-\bar{G}(x_{\epsilon}) \right)^n, \nonumber
\end{align}
which, by Proposition \ref{prop:sub}, noting $\Phi(x)= e^{l(x)}$ and
choosing $y=\lceil l(n) \rceil-1$, implies
\begin{align}\label{eq:theom1Iupper}
  \Pr[N>n]&  \leq   \sum_{k=0}^{\lceil l(n) \rceil-1} e^{-k- \frac{1}{1+\epsilon}l\left(  \frac{n}{k+1} \right)} + e^{-l(n)}+
  o\left( \Pr[N>n] \right) \nonumber\\
  &\leq \lceil l(n) \rceil e^{-\frac{1}{1+\epsilon}l\left( \frac{n}{\lceil l(n) \rceil} \right)}
  + e^{-l(n)}+
  o\left( \Pr[N>n] \right).
\end{align}
From (\ref{eq:conditionI2}), it is easy to see  that $l(x)$
increases to infinity when $x\to \infty$ and,  since $l(x)$ is
slowly varying, by (\ref{eq:lx}) and (\ref{eq:theom1Iupper}), we
obtain
\begin{align}\label{eq:theom1Iupper2}
 \varliminf_{n \to \infty} \frac{ \log \Pr[N>n]^{-1}}{ l(n)} &  \geq  1.
\end{align}

Next, we prove the \emph{lower bound}. Recalling
(\ref{eq:equivBound2I}) and choosing $n$ large enough, we obtain
\begin{align}
 \Pr[N>n]& = \expect[(1-\bar{G}(L))^n] \nonumber\\
 &\geq  \left( 1- \frac{1}{n} \right)^n \Pr \left[\bar{G}(L) \leq \frac{1}{n}
 \right] \nonumber\\
 &\geq  \left( 1- \frac{1}{n} \right)^n \Pr \left[\Phi^{\leftarrow} \left( \bar{F}^{-(1-\epsilon)}(L)
\right) \geq  n
 \right] \nonumber\\
 &\geq \left( 1- \frac{1}{n} \right)^n \frac{1}{\Phi(n)^{\frac{1}{1-\epsilon}}} \; ,
 \nonumber
\end{align}
implying
\begin{align}
 \varlimsup_{n \to \infty} \frac{ \log \Pr[N>n]^{-1}}{ l(n)} &  \leq
 \frac{1}{1-\epsilon}, \nonumber
\end{align}
which, by passing $\epsilon\to 0$ and in conjunction with
(\ref{eq:theom1Iupper2}), proves the theorem. \qed
\end{proof}

\subsubsection{Medium Heavy (Weibull) Asymptotics }\label{ss:weibull}
 In the
preceding subsection, we studied the scenario when the distribution
of $N$ is heavier than any Weibull distribution. Specifically, we
establish the necessary conditions under which $\Pr[N>n] \approx
\Phi^{-1}(n)$ holds when the separation between $\Pr[L>x]$ and
$\Pr[A>x]$ can be characterized in the form of $\Phi(x)=e^{l(x)}$
with $l(x)$ being slowly varying. In this subsection, we further
increase the separation in the sense that $\Phi(x)=e^{R_{\beta}(x)}$
with $R_{\beta}(x)$ being regularly varying of index $\beta>0$, and
under this condition the distribution of $N$ is shown to be of
Weibull type.  In this situation, the tail equivalence developed in
the preceding subsection does not hold anymore and admits a
different form, as stated in the following theorem.

\begin{theorem}\label{theorem:2I}
  If an eventually non-decreasing function $\Phi(x)\eqdef e^{R_{\beta}(x)}$ satisfies (\ref{eq:conditionI2})
    where $R_{\beta}(x)\equiv x^{\beta}l(x)$, $\beta>0$
   is regularly varying with $l(x)$ satisfying
 \begin{equation}\label{eq:lx2}
  \lim_{x \to
  \infty} \frac{l\left( \left( \frac{x}{l(x)} \right)^{\frac{1}{1+\beta}} \right)}{l(x)} = 1,
  \end{equation} 
    then,
   \begin{equation}\label{eq:NI2}
   \lim_{n \rightarrow \infty}\frac{ \log \left(\Pr[N>n]^{-1} \right)}{ \left(\log\Phi(
   n) \right)^{\frac{1}{\beta+1}}}=\beta^{\frac{1}{\beta+1}} +
\beta^{-\frac{\beta}{\beta+1}}.
   \end{equation}
\end{theorem}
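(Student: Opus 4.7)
The plan is to adapt the scheme from the proof of Theorem \ref{theorem:1I} to the Weibull regime, where the limiting constant now arises from a genuine Laplace-type optimization rather than from a single scale. First, I would translate the condition (\ref{eq:conditionI2}) into the two-sided bound
$$\Phi^{\leftarrow}\!\left(\bar{F}^{-(1-\epsilon)}(x)\right) \leq \bar{G}^{-1}(x) \leq \Phi^{\leftarrow}\!\left(\bar{F}^{-(1+\epsilon)}(x)\right), \quad x > x_\epsilon,$$
and, using $(1-\bar{G}(L))^n \leq e^{-n\bar{G}(L)}$ together with the fact that $V \eqdef \bar{F}(L)$ is uniform on $(0,1)$, obtain
$$\Pr[N>n] \leq \expect\!\left[e^{-n/\Phi^{\leftarrow}\!\left(V^{-(1+\epsilon)}\right)}\right] + \left(1-\bar{G}(x_\epsilon)\right)^n.$$
Setting $R_\beta^{\leftarrow}(t) \eqdef \Phi^{\leftarrow}(e^t)$, which is regularly varying of index $1/\beta$, makes the exponent explicitly of the form $-n/R_\beta^{\leftarrow}((1+\epsilon)(-\log V))$.

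Next I would discretize exactly as in Theorem \ref{theorem:1I}: decompose the event $\{n/\Phi^{\leftarrow}(V^{-(1+\epsilon)}) \in [k, k+1]\}$, which translates into $\{V \leq \Phi(n/(k+1))^{-1/(1+\epsilon)}\}$ and hence has probability at most $e^{-R_\beta(n/(k+1))/(1+\epsilon)}$. This yields
$$\Pr[N>n] \leq \sum_{k=0}^{y} \exp\!\left(-k - \tfrac{R_\beta(n/(k+1))}{1+\epsilon}\right) + e^{-(y+1)} + \left(1-\bar{G}(x_\epsilon)\right)^n,$$
with $y$ chosen polynomially in $n$. Since the sum is dominated by its largest term up to a polynomial factor (which becomes negligible under the $\log(\cdot)$), the analysis reduces to minimizing $h(k) = k + R_\beta(n/(k+1))/(1+\epsilon)$ over $k$. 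A calculus computation using $R_\beta(x)=x^\beta l(x)$ and $R_\beta'(x) \sim \beta x^{\beta-1}l(x)$ gives the critical point $k^{*}$ satisfying $k^{*\,\beta+1}(1+\epsilon) \sim \beta n^\beta l(n/k^{*})$, at which $R_\beta(n/k^{*})/(1+\epsilon) = k^{*}/\beta$, so that
$$h(k^{*}) \sim \tfrac{\beta+1}{\beta}\,k^{*} \sim (\beta+1)\beta^{-\beta/(\beta+1)}(1+\epsilon)^{-1/(\beta+1)}\, n^{\beta/(\beta+1)}\, l(n/k^{*})^{1/(\beta+1)}.$$
Condition (\ref{eq:lx2}) is precisely what is needed to replace $l(n/k^{*})$ by $l(n)$: since $n/k^{*}$ is, up to slowly varying factors, of the order $(n/l(n))^{1/(\beta+1)}$, a standard iteration argument on slowly varying functions using (\ref{eq:lx2}) yields $l(n/k^{*}) \sim l(n)$. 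Letting $\epsilon \to 0$ then produces the desired upper estimate, since $(\beta+1)\beta^{-\beta/(\beta+1)} = \beta^{1/(\beta+1)}+\beta^{-\beta/(\beta+1)}$ and $n^{\beta/(\beta+1)} l(n)^{1/(\beta+1)} = (\log \Phi(n))^{1/(\beta+1)}$.

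For the matching lower bound I would use the one-point estimate
$$\Pr[N>n] \geq (1-1/M)^n \Pr[V \leq V_0] \geq e^{-n/M - O(n/M^2)}\, V_0,$$
choosing $V_0 = V_0(n)$ so that $M = \Phi^{\leftarrow}\!\left(V_0^{-(1-\epsilon)}\right)$ makes $n/M + \log(1/V_0)$ essentially minimal; this is the same optimization as above with $1+\epsilon$ replaced by $1-\epsilon$, producing the same dominant constant. Taking $\epsilon \to 0$ matches the upper bound and yields (\ref{eq:NI2}). The main technical obstacle is the slowly varying self-correction --- proving cleanly via (\ref{eq:lx2}) that $l(n/k^{*}) \sim l(n)$ even though $k^{*}$ itself depends implicitly on $l$; once this is in hand, the remainder is a routine Laplace-type estimate coupled with the geometric discretization from Theorem \ref{theorem:1I}.
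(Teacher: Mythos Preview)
Your proposal is correct and follows essentially the same route as the paper: the same discretization carried over from Theorem~\ref{theorem:1I}, reduction to minimizing $k + R_\beta(n/(k+1))/(1+\epsilon)$, and a one-point lower bound with the same optimal scale. The paper handles the self-referential issue you flag at the end by explicitly bracketing the minimizer between two deterministic values $x_1 < x^\ast < x_2$, each of the form $c_\epsilon\,\beta^{1/(\beta+1)} n^{\beta/(\beta+1)} l(n)^{1/(\beta+1)}$, and verifying the sign of $f'$ at these points via (\ref{eq:lx2}); this avoids the implicit iteration you mention and makes the $l(n/k^\ast)\sim l(n)$ step immediate.
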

\begin{remark}
This theorem, or more precisely Theorem \ref{theorem:TWeibull} of
the following Subsection \ref{ss:asmpT},  implies part (1:2) of
Theorem 2.1 in \cite{Asmussen07}, and provides a more precise
logarithmic asymptotics instead of a double logarithmic limit that
was proved in \cite{Asmussen07}. Furthermore, although the condition
(\ref{eq:lx2}) appears complicated, it is easy to check that any
slowly varying function $l(x)=l_1(\log x)$ satisfies it, where
$l_1(\cdot)$ is also a slowly varying function.
\end{remark}

\begin{proof}[of Theorem \ref{theorem:2I}]
First, we begin with proving the \emph{upper bound}. Following the
same approach as in the proof of Theorem \ref{theorem:1I}, we
obtain, for $\epsilon>0$, integer $y$ and $n$ large enough,
\begin{align}\label{eq:theom2Iupper}
  \Pr[N>n]  & \leq \sum_{k=0}^{y-1} e^{-k}
        \Pr\left[ k \leq  \frac{n}{\Phi^{\leftarrow}(V^{-(1+\epsilon)})} \leq k+1 \right]
        + e^{-y}+
 o\left( \Pr[N>n] \right) \nonumber\\
 &\leq  \sum_{k=0}^{ y-1 } e^{-k- \frac{1}{1+\epsilon} R_{\beta}\left(  \frac{n}{k+1} \right)}
 + e^{-y}+
  o\left( \Pr[N>n] \right).
\end{align}
Using the same argument as in (\ref{eq:modified}), we can find an
absolutely continuous and
 strictly increasing function
$R^{\ast}_{\beta}(u)\eqdef \beta \int_{1}^{u}R_{\beta}(s)s^{-1}ds,
u\geq 1$ that is a modified version of $R_{\beta}(u)$. This newly
constructed function $R^{\ast}_{\beta}(u)$ satisfies that, for
$0<\epsilon<1$, there exists $y_{\epsilon}>0$, such that $
(1-\epsilon)R^{\ast}_{\beta}(u)<R_{\beta}(u)<(1+\epsilon)R^{\ast}_{\beta}(u)$
for $u>y_{\epsilon}$. Therefore, for $0<x<n/y_{\epsilon}$,
$$x+ \frac{1}{1+\epsilon} R_{\beta}\left(\frac{n}{x}\right)
 \geq x+ \frac{1-\epsilon}{1+\epsilon}
 R^{\ast}_{\beta}\left(\frac{n}{x}\right),
 $$
and, for $u\geq 1$,
\begin{equation}\label{eq:theom2IupperC}
\left(R^{\ast}_{\beta}(u)\right)' = \beta u^{\beta-1} l(u).
\end{equation}

 Choosing $y= \lceil n/y_{\epsilon} \rceil$ in
(\ref{eq:theom2Iupper}) and using the asymptotic equivalence
relationship between $R_{\beta}(\cdot)$ and
$R^{\ast}_{\beta}(\cdot)$, we obtain
\begin{align}
  \Pr[N>n]  & \leq \sum_{k=0}^{ \left\lceil \frac{n}{y_{\epsilon}} \right\rceil-1 }
      e^{-k- \frac{1}{(1+\epsilon)} R_{\beta}\left(  \frac{n}{k+1} \right)}
 + e^{-\frac{n}{y_{\epsilon}}}+
  o\left( \Pr[N>n] \right) \nonumber\\
  & \leq \sum_{k=0}^{ \left\lceil \frac{n}{y_{\epsilon}} \right\rceil-1 }
      e^{-k- \frac{1-\epsilon}{1+\epsilon} R^{\ast}_{\beta}\left(  \frac{n}{k+1} \right)}
      +  o\left( \Pr[N>n] \right).\label{eq:theom2IupperB}
\end{align}
Next, let $f(x)= x+  R^{\ast}_{\beta}\left( n/x
\right)(1-\epsilon)/(1+\epsilon)$, and suppose that $f(x)$ reaches
the maximum at $x^{\ast}$ for $0< x \leq n/y_{\epsilon}$. From
(\ref{eq:theom2IupperC}), it is easy to check that
$$
f'(x)= 1- \frac{1-\epsilon}{1+\epsilon}
          \left( R^{\ast}_{\beta}\left(\frac{n}{x}\right) \right)' \frac{n}{x^2} = 1-
          \frac{1-\epsilon}{1+\epsilon} \frac{\beta}{n} \frac{
          n^{\beta+1}}{x^{\beta+1}}l\left(\frac{n}{x} \right).
$$
Then, define $g(u)\eqdef
          u^{\beta+1}l\left(u\right)$, and use the same argument as
          in constructing $R^{\ast}_{\beta}(\cdot)$,
       we can find an
absolutely continuous and
 strictly increasing function
$g^{\ast}(u)\eqdef \beta \int_{1}^{u}u^{\beta}l\left(u\right)ds,
u\geq 1$, such that $(1-\epsilon) g(u)
<g^{\ast}(u)<(1+\epsilon)g(u), u>u_{\epsilon} $ for
$u_{\epsilon}>0$. Therefore, for $0<x<n/u_{\epsilon}$, we obtain,
$$
  1- \frac{1}{1+\epsilon} \frac{\beta}{n}g^{\ast}\left(\frac{n}{x}\right)
  < f'(x) = 1- \frac{1-\epsilon}{1+\epsilon} \frac{\beta}{n}g\left(\frac{n}{x}\right)
      < 1- \frac{1-\epsilon}{(1+\epsilon)^2}
  \frac{\beta}{n}g^{\ast}\left(\frac{n}{x}\right),
$$
where, as shown in the preceding inequalities,  the lower and upper
bound of $f'(x)$ are two monotonically increasing functions for
$0<x<n$.

Now, define
$$x_1\eqdef \left(\frac{(1-\epsilon)^3}{(1+\epsilon)^2} \right)^{\frac{1}{\beta+1}}
\beta^{\frac{1}{\beta+1}} n ^{\frac{\beta}{\beta+1}} l\left( n
\right)^{\frac{1}{\beta+1}}
$$
and
$$
 x_2\eqdef
(1+\epsilon)^{\frac{1}{\beta+1}}\beta^{\frac{1}{\beta+1}}
n^{\frac{\beta}{\beta+1}} l\left( n \right)^{\frac{1}{\beta+1}}.
$$
It is easy to see that,  by condition (\ref{eq:lx2}), for $n$ large
enough,
$$
 f'(x_1)\leq 1- \frac{1-\epsilon}{(1+\epsilon)^2}
  \frac{\beta}{n}g^{\ast}\left(\frac{n}{x_1}\right)< 1-
  \left(\frac{1-\epsilon}{1+\epsilon}\right)^2
  \frac{\beta}{n}g\left(\frac{n}{x_1}\right)<0,
$$
and
$$
 f'(x_2)\geq 1- \frac{1}{1+\epsilon}
  \frac{\beta}{n}g^{\ast}\left(\frac{n}{x_2}\right)> 1-
  \frac{\beta}{n}g\left(\frac{n}{x_2}\right)>0,
$$
 which implies that, there exist $n_{\epsilon}>0$ such that
 for all $n>n_{\epsilon}$,
\begin{equation}\label{eq:x0}
x_1< x^{\ast} < x_2.
\end{equation}
Therefore, using (\ref{eq:theom2IupperB}), (\ref{eq:x0}) and
recalling $R_{\beta}(u)<(1+\epsilon)R^{\ast}_{\beta}(u)$ yields
\begin{align}
  \Pr[N>n]  &\leq  \left\lceil \frac{n}{y_{\epsilon}}
\right\rceil e^{1-f(x^{\ast})} +
  o\left( \Pr[N>n] \right) \nonumber\\
  & \leq \left\lceil \frac{n}{y_{\epsilon}}
\right\rceil  e^{1-x_1-\frac{1}{(1+\epsilon)^2} R_{\beta}\left(
\frac{n}{x_2} \right)} +
  o\left( \Pr[N>n] \right), \nonumber
\end{align}
resulting in
\begin{equation}
\varliminf_{n \to \infty} \frac{\log
\left(\Pr[N>n]^{-1}\right)}{n^{\frac{\beta}{\beta+1}}
l(n)^{\frac{1}{\beta+1}} } \geq
\left(\frac{(1-\epsilon)^3}{(1+\epsilon)^2}
\right)^{\frac{1}{\beta+1}}\beta^{\frac{1}{\beta+1}} +
\left(1+\epsilon \right)^{-\frac{\beta}{\beta+1}-2}
\beta^{-\frac{\beta}{\beta+1}}. \nonumber
\end{equation}
Passing $\epsilon\to 0$ in the preceding inequality yields
\begin{equation}\label{eq:theom2Iupper3}
\varliminf_{n \to \infty} \frac{\log
\left(\Pr[N>n]^{-1}\right)}{n^{\frac{\beta}{\beta+1}}
l(n)^{\frac{1}{\beta+1}} } \geq \beta^{\frac{1}{\beta+1}} +
\beta^{-\frac{\beta}{\beta+1}}.
\end{equation}

Now, we proceed with proving the \emph{lower bound}. By recalling
the condition (\ref{eq:equivBound2I}) and using $1-x\geq
e^{-(1+\epsilon)x}$ for $x$ small enough, we obtain, for $n$ large
enough and $x_0>0$,
\begin{align}
  \Pr[N>n] &\geq \expect[(1-\bar{G}(L))^n
  \ind(L\geq x_{\epsilon})] \geq \expect[e^{-(1+\epsilon)\bar{G}(L)n}
  \ind(L\geq x_{\epsilon})] \nonumber\\
  &\geq \expect \left[  e^{- \frac{(1+\epsilon)n}{\Phi^{\leftarrow}\left(V^{-(1-\epsilon)}\right)} }
   \ind(V\leq \bar{F}(x_{\epsilon})) \right]  \geq  e^{-x_0}
        \Pr\left[  \frac{(1+\epsilon)n}{\Phi^{\leftarrow}\left(V^{-(1-\epsilon)}\right)} \leq x_0,
        V\leq \bar{F}(x_{\epsilon}) \right]
        \nonumber\\
  & = e^{-x_0 }\Phi\left(\frac{(1+\epsilon)n}{x_0} \right)^{-\frac{1}{1-\epsilon}}
   = e^{-x_0-\frac{1}{1-\epsilon}R_{\beta}\left(\frac{(1+\epsilon)n}{x_0}
   \right)}, \nonumber
\end{align}
since
$\{(1+\epsilon)n/\Phi^{\leftarrow}\left(V^{-(1-\epsilon)}\right)
\leq x_0 \}$ implies $\{V\leq \bar{F}(x_{\epsilon})\}$  for all $n$
large enough. Next, by choosing $x_0=\beta^{\frac{1}{\beta+1}} n
^{\frac{\beta}{\beta+1}} l\left( n \right)^{\frac{1}{\beta+1}}$,
 using the condition (\ref{eq:lx2}), and then passing $n\to \infty$
 as well as $\epsilon \to 0$, yields,
\begin{equation}\label{eq:theom2Ilower3}
\varlimsup_{n \to \infty} \frac{\log
\left(\Pr[N>n]^{-1}\right)}{n^{\frac{\beta}{\beta+1}}
l(n)^{\frac{1}{\beta+1}} } \leq \beta^{\frac{1}{\beta+1}} +
\beta^{-\frac{\beta}{\beta+1}}.
\end{equation}
Finally, combining (\ref{eq:theom2Iupper3}) and
(\ref{eq:theom2Ilower3}) finishes the proof. \qed
\end{proof}

\subsubsection{Nearly Exponential Asymptotics}\label{ss:nearly}
 In the
preceding subsection,  the functional separation between $\Pr[L>x]$
and $\Pr[A>x]$ can be characterized in the form of
$\Phi(x)=e^{R_{\gamma}(x)}$ with $R_{\gamma}(x)$ being regularly
varying. In this subsection, we investigate the situation when the
separation in terms of $\Phi(x)$ is even larger than
$e^{R_{\gamma}(x)}$, which leads to the nearly exponential
asymptotics for $\Pr[N>n]$ in the following proposition and
Theorem~\ref{theorem:nearly}.

\begin{proposition}\label{theorem:nearlyB}
If $\log (\bar{F}^{-1}(x)) \sim e^{ \left( \log( \bar{G}^{-1}(x))
\right)^{\delta}}$, $\delta>1$, then,
 \begin{equation}
\log \left(\log \left(\Pr[N>n]^{-1} \right) \right)- \log n +(\log
n)^{\frac{1}{\delta}} \sim \frac{1}{\delta}\left( \log n
\right)^{\frac{2}{\delta}-1}. \nonumber
 \end{equation}
\end{proposition}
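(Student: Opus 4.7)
The plan is to adapt the saddle-point technique used in Theorems \ref{theorem:1I} and \ref{theorem:2I} to the doubly-exponential scaling $\Phi(y) \eqdef \exp(\exp((\log y)^{\delta}))$ implied by the hypothesis. Setting $V \eqdef \bar{F}(L)$, which is uniform on $(0,1)$ because $\bar{F}$ is continuous, and inverting the hypothesis, we obtain that for every $\epsilon\in(0,1)$ and all sufficiently large $L$,
\begin{equation*}
   \exp\!\bigl((1-\epsilon)(\log\log V^{-1})^{1/\delta}\bigr) \le \bar{G}(L)^{-1} \le \exp\!\bigl((1+\epsilon)(\log\log V^{-1})^{1/\delta}\bigr).
\end{equation*}
Starting from the identity $\Pr[N>n] = \expect[(1-\bar{G}(L))^{n}]$, applying the standard sandwich $e^{-(1+\epsilon)x} \le 1-x \le e^{-x}$ for small $x$, and changing variables to $t \eqdef \log V^{-1}$ (a standard exponential), the problem reduces to Laplace-type estimates (or, equivalently, to discretized sums in the spirit of (\ref{eq:theom2IupperB})) for
\begin{equation*}
   \int_{0}^{\infty}\exp\!\Bigl(-t - n\, e^{-(\log t)^{1/\delta}(1\pm\epsilon)}\Bigr)\,dt.
\end{equation*}

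For the upper bound I would truncate to $L \ge x_{\epsilon}$, combine $1-x \le e^{-x}$ with the upper estimate on $\bar{G}(L)^{-1}$, and bound the resulting integral by a polynomial prefactor times its maximum integrand. For the matching lower bound I would restrict to the event $\{\bar{G}(L) \le x_{0}/n\}$ with $x_{0}$ chosen as the saddle value described below, apply $(1-x)^{n}\ge e^{-(1+\epsilon)nx}$, and invoke the lower estimate on $\bar{G}(L)^{-1}$ to obtain a single-point bound matching the integral's saddle value. The Laplace prefactor contributes at most $O(\log n)$ to the exponent $F(t^{\ast})$ defined below, which is polynomial in $n$, and therefore only an $O(\log n / F(t^{\ast})) = o(1)$ correction after the second logarithm.

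The heart of the argument is the saddle-point analysis of $F(t) \eqdef t + n\, e^{-(\log t)^{1/\delta}}$. Writing $s \eqdef (\log t^{\ast})^{1/\delta}$, the critical condition $F'(t^{\ast}) = 0$ becomes $n\,e^{-s} = \delta\, t^{\ast} s^{\delta-1}$, which upon taking logarithms rearranges to
\begin{equation*}
   s^{\delta} + s + (\delta-1)\log s + \log\delta = \log n.
\end{equation*}
Since $\delta>1$, the leading balance is $s^{\delta}\sim \log n$; writing $s = (\log n)^{1/\delta}(1+\eta)$ with $\eta=o(1)$ and expanding $s^{\delta} = \log n\,(1+\delta\eta+O(\eta^{2}))$, the next-order balance yields $\eta = -(\log n)^{1/\delta-1}/\delta + o(\cdot)$, hence
\begin{equation*}
   s = (\log n)^{1/\delta} - \tfrac{1}{\delta}(\log n)^{2/\delta-1} + o\!\bigl((\log n)^{2/\delta-1}\bigr).
\end{equation*}
Combining $F(t^{\ast}) = t^{\ast}(1 + \delta s^{\delta-1})$ with $\log t^{\ast} = s^{\delta}$ and the saddle equation yields the clean identity $\log F(t^{\ast}) = \log n - s + o(1)$. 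Passing to $\log \log (\Pr[N>n])^{-1}$, letting $\epsilon\downarrow 0$, and substituting the expansion of $s$ recovers the claimed equivalence.

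The principal obstacle is uniform control of all sub-leading corrections. For $\delta > 2$ the target correction $\tfrac{1}{\delta}(\log n)^{2/\delta-1}$ itself tends to zero, so every error --- the $\epsilon$-slack in the hypothesis, the Laplace-prefactor contribution, the terms $(\delta-1)\log s + \log\delta$ in the saddle equation, and the quadratic remainder $O(\eta^{2})$ in the expansion of $s^{\delta}$ --- must be shown to be $o\!\bigl((\log n)^{2/\delta-1}\bigr)$ uniformly before letting $\epsilon\downarrow 0$. The specific coefficient $1/\delta$ arises cleanly from one step of the binomial expansion of $((\log n)^{1/\delta}+a)^{\delta}$, but upgrading the final statement from $O(\cdot)$ to $\sim$ requires matching this coefficient exactly across both the upper and lower bounds.
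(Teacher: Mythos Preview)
Your proposal is correct and follows essentially the same saddle-point strategy as the paper, differing only in the choice of variable. The paper works in the variable $x\approx n\bar G(L)$, discretizes into the sum $\sum_k e^{-k-\frac{1}{1+\epsilon}e^{(\log n-\log(k+1))^{\delta}}}$, minimizes $f(x)=x+e^{(\log(n/x))^{\delta}}$ by exhibiting an explicit test point $x_1=n\,e^{-(\log n-(1-\epsilon)(\log n)^{1/\delta})^{1/\delta}}$ with $f'(x_1)<0$, and for the lower bound plugs in $x_0=(1+\epsilon)n\,e^{-(\log n-(\log n)^{1/\delta})^{1/\delta}}$. You instead parametrize by $t=\log V^{-1}$ and optimize $F(t)=t+n\,e^{-(\log t)^{1/\delta}}$ via a formal expansion of the critical point. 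Under the substitution $t=e^{(\log(n/x))^{\delta}}$ one has $F(t)=f(x)$, so the two optimizations are identical; your expansion $s=(\log n)^{1/\delta}-\tfrac{1}{\delta}(\log n)^{2/\delta-1}+o(\cdot)$ and the identity $\log F(t^\ast)=\log n-s+o(1)$ recover exactly the paper's choice of $x_1$ and $x_0$. The paper's explicit test-point approach has the advantage of sidestepping the uniform error control you flag as the principal obstacle, since one only needs to check the sign of $f'$ at a single point rather than justify the full Laplace expansion.
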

\begin{remark}
Observe that $\delta=2$ represents another critical point since
$\left( \log n \right)^{2/\delta-1}$ converges to $0$ or $\infty$ if
$\delta>2$ or $1<\delta<2$, respectively. Furthermore, the result
shows that $\Pr[N>n] \approx \exp \left(-n/e^{(\log
n)^{1/\delta}}\right)$, which means that $N$ is nearly exponential
because $e^{ \left( \log n \right)^{1/\delta}}$  is slowly varying
for $\delta>1$ (see p.~16 in \cite{BG87}). In addition, informally
speaking, we point out that the case $\delta= 1$ corresponds to the
Weibull case already covered by Theorem \ref{theorem:2I} in
Subsection \ref{ss:weibull}, meaning that this proposition describes
the change in functional behavior on the boundary between the
Weibull case and the nearly exponential one.
\end{remark}
\begin{proof}
First, we prove the \emph{upper bound}. Following the same approach
as in the proof of Theorem \ref{theorem:1I},  we obtain, for
$\epsilon>0$,
\begin{align}\label{eq:nearlyupperA1B}
  \Pr[N>n]  &\leq  \sum_{k=0}^{ n-1 } e^{-k- \frac{1}{1+\epsilon} e^{ \left(\log n - \log (k+1)\right)^{\delta}}}
+
  o\left( \Pr[N>n] \right).
\end{align}
Suppose that $f(x)\eqdef x+ \frac{1}{1+\epsilon} e^{ \left(\log n-
\log x\right)^{\delta}} $ reaches the minimum at $x^{\ast}$. It is
easy to see that $f'(x)=1-e^{(\log  n - \log
x)^{\delta}}/((1+\epsilon)x)$ is an increasing function in $x$ on
$(0,n)$. For $0<\epsilon<1$ define
\begin{align}
x_1&\eqdef \frac{n}{ e^{ \left( \log n - (1-\epsilon)(\log
n)^{1/\delta} \right)^{1/\delta} } }, \nonumber
\end{align}
 and for $n$ large enough, we obtain
 $$
f'(x_1)= 1- \frac{e^{-(1-\epsilon)(\log
n)^{1/\delta}}}{(1+\epsilon)} e^{\left(\log n - (1-\epsilon)(\log
n)^{1/\delta}\right)^{1/\delta}}\leq 1- \frac{e^{\epsilon (\log
n)^{1/\delta}- (1-\epsilon ^2) (\log
n)^{2/\delta-1}/\delta}}{1+\epsilon}<0,
$$
implying $f(x)'<0$ for $x<x_1$. Therefore, the minimum point
$x^{\ast}$ satisfies
\begin{align}\label{eq:nearlyupperA2B}
  x^{\ast} \geq x_1.
\end{align}
 Combining
(\ref{eq:nearlyupperA1B}) and  (\ref{eq:nearlyupperA2B}), we obtain,
for $n$ large,
\begin{align}
  \Pr[N>n]  & \leq  n e^{1-f(x^{\ast})} +
  o\left( \Pr[N>n] \right) \leq n e^{1 - x_1} +
  o\left( \Pr[N>n] \right)< 2n e^{1 - x_1},\nonumber
\end{align}
and therefore, for $n$ large enough,
\begin{equation}
 \log \left(\Pr[N>n]^{-1} \right)  \geq  \frac{n}{e^{(\log n - (1-\epsilon)( \log n)^{1/\delta})^{1/\delta}}}
  -
 \log (2n) -1,\nonumber
\end{equation}
which implies
\begin{equation}\label{eq:nearlyupperAB}
\log \left(\log \left(\Pr[N>n]^{-1} \right) \right)- \log n +(\log
n)^{\frac{1}{\delta}} \gtrsim  \frac{1}{\delta}\left( \log n
\right)^{\frac{2}{\delta}-1}.
\end{equation}

Next, we prove the \emph{lower bound}. By using the same arguments
as in the proof of the lower bound for Theorem \ref{theorem:2I}, we
obtain, for $n$ large enough,
\begin{align}
 \log \left(\Pr[N>n]^{-1} \right) &\leq  x_0 +\frac{1}{1-\epsilon} \log \left(\Phi\left(\frac{(1+\epsilon)n}{x_0}
  \right)\right)=x_0+\frac{1}{1-\epsilon}e^{\left( \log \left( \frac{(1+\epsilon)n}{x_0} \right)
  \right)^{\delta}}, \nonumber
\end{align}
   which, by choosing $x_0 = (1+\epsilon)ne^{-\left( \log n - (\log
   n)^{1/\delta}\right)^{1/\delta}}$,  passing $n\to \infty$ and then  $\delta \to 0$, yields
\begin{equation}\label{eq:nearlylowerAB}
\log \left(\log \left(\Pr[N>n]^{-1} \right) \right)- \log n +(\log
n)^{\frac{1}{\delta}} \lesssim  \frac{1}{\delta}\left( \log n
\right)^{\frac{2}{\delta}-1}.
\end{equation}

Finally, combining (\ref{eq:nearlyupperAB}) and
(\ref{eq:nearlylowerAB}) finishes the proof.
\end{proof}

\begin{theorem}\label{theorem:nearly}
If $\log (\bar{F}^{-1}(x)) \sim e^{R_{\gamma}\left(\bar{G}^{-1}(x)
\right)}$, where $R_{\gamma}(\cdot)$ is regularly varying with index
$\gamma>0$, then,
 \begin{equation}\label{eq:nearlyExp}
  \log \Pr[N>n]^{-1} \sim \frac{n}{ R_{\gamma}^{{\leftarrow}}(\log
  n)},
 \end{equation}
 where $R_{\gamma}^{{\leftarrow}}(\cdot)$ is the asymptotic inverse of
$R_{\gamma}(\cdot)$ as defined in Theorem 1.5.12 on p.~28 of
\cite{BG87}.
\end{theorem}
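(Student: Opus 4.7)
The plan is to adapt the two-sided bounding method used in Theorem~\ref{theorem:2I} and Proposition~\ref{theorem:nearlyB}, specialized to the doubly exponential growth $\Phi(y)\eqdef \exp(\exp(R_{\gamma}(y)))$ so that $\log\Phi(y)=e^{R_{\gamma}(y)}$ and the hypothesis reads $\bar F^{-1}(x)\approx\Phi(\bar G^{-1}(x))$ in logarithmic scale. Since $R_{\gamma}$ is regularly varying with index $\gamma>0$, $\Phi$ is eventually strictly increasing and the usual inequalities analogous to~(\ref{eq:equivBound2I}) hold: for $0<\epsilon<1$ and $x$ large,
$$\Phi^{\leftarrow}\!\bigl(\bar F^{-(1-\epsilon)}(x)\bigr)\le \bar G^{-1}(x)\le \Phi^{\leftarrow}\!\bigl(\bar F^{-(1+\epsilon)}(x)\bigr).$$

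For the \emph{upper bound} I would use the usual decomposition with $V\eqdef\bar F(L)\sim U(0,1)$, together with $\Pr[n/\Phi^{\leftarrow}(V^{-(1+\epsilon)})\le k+1]=\Phi(n/(k+1))^{-1/(1+\epsilon)}$, to get
$$\Pr[N>n]\;\lesssim\;\sum_{k=0}^{n-1}\exp\!\Bigl(-k-\tfrac{1}{1+\epsilon}e^{R_{\gamma}(n/(k+1))}\Bigr)+o(\Pr[N>n]).$$
I would then minimize $f(k)\eqdef k+\tfrac{1}{1+\epsilon}e^{R_{\gamma}(n/(k+1))}$, substituting $u=n/(k+1)$. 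Setting $f'(k)=0$ and using $R_{\gamma}'(u)\sim \gamma R_{\gamma}(u)/u$ (regular variation) yields the implicit equation
$$n\;\sim\;\tfrac{\gamma}{1+\epsilon}\,u\,R_{\gamma}(u)\,e^{R_{\gamma}(u)}.$$
Taking logarithms gives $R_{\gamma}(u)=\log n-\log(\gamma u R_{\gamma}(u)/(1+\epsilon))=\log n-O(\log\log n)$, so by slow variation of $R_{\gamma}^{\leftarrow}$ (Theorem~1.5.12 of~\cite{BG87}) one obtains $u^{*}\sim R_{\gamma}^{\leftarrow}(\log n)$. Substituting back, $k^{*}\sim n/R_{\gamma}^{\leftarrow}(\log n)$, whereas $e^{R_{\gamma}(u^{*})}\sim n/(\log n\cdot R_{\gamma}^{\leftarrow}(\log n))=o(k^{*})$, so $f(k^{*})\sim n/R_{\gamma}^{\leftarrow}(\log n)$. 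The sum contains at most $n$ terms, each of size $\le e^{-f(k^{*})}$, and the $\log n$ prefactor is absorbed by the $\sim$, yielding $\log\Pr[N>n]^{-1}\gtrsim n/R_{\gamma}^{\leftarrow}(\log n)$.

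For the \emph{lower bound} I would mirror the corresponding step in the proof of Theorem~\ref{theorem:2I}: using $1-x\ge e^{-(1+\epsilon)x}$ for small $x>0$, for any $x_{0}>0$ one has
$$\Pr[N>n]\;\ge\;\exp\!\Bigl(-x_{0}-\tfrac{1}{1-\epsilon}e^{R_{\gamma}((1+\epsilon)n/x_{0})}\Bigr).$$
Choose $x_{0}=\lambda\,n/R_{\gamma}^{\leftarrow}(\log n)$ with $\lambda>1+\epsilon$; regular variation gives $R_{\gamma}((1+\epsilon)n/x_{0})\sim\bigl((1+\epsilon)/\lambda\bigr)^{\gamma}\log n$, so $e^{R_{\gamma}(\cdots)}\sim n^{((1+\epsilon)/\lambda)^{\gamma}}=o(x_{0})$ because $((1+\epsilon)/\lambda)^{\gamma}<1$ while $R_{\gamma}^{\leftarrow}(\log n)$ grows only polynomially in $\log n$. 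Hence the exponent is $\sim \lambda\,n/R_{\gamma}^{\leftarrow}(\log n)$; passing $\lambda\downarrow 1+\epsilon$ and then $\epsilon\downarrow 0$ gives $\log\Pr[N>n]^{-1}\lesssim n/R_{\gamma}^{\leftarrow}(\log n)$, which combined with the upper-bound direction proves~(\ref{eq:nearlyExp}).

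The \emph{main obstacle} is the rigorous inversion of the first-order condition in the upper bound. The transcendental relation $\log n\sim R_{\gamma}(u)+\log(\gamma u R_{\gamma}(u))$ must be solved up to an additive $O(\log\log n)$ error, and one must then invoke Potter-type bounds together with slow variation of $R_{\gamma}^{\leftarrow}(\cdot)/(\cdot)^{1/\gamma}$ to conclude $R_{\gamma}^{\leftarrow}(\log n-O(\log\log n))\sim R_{\gamma}^{\leftarrow}(\log n)$; only then can one assert $k^{*}\sim n/R_{\gamma}^{\leftarrow}(\log n)$ and that the $e^{R_{\gamma}(u^{*})}$ contribution is asymptotically negligible. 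The remaining steps are essentially bookkeeping parallel to the proofs of Theorem~\ref{theorem:2I} and Proposition~\ref{theorem:nearlyB}, including the straightforward construction of an absolutely continuous monotone surrogate for $R_{\gamma}$ as in~(\ref{eq:modified}) to justify the differentiation step.
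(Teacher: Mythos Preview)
Your proposal is correct and follows essentially the same route as the paper: the same geometric representation $\Pr[N>n]=\expect[(1-\bar G(L))^n]$, the same sum decomposition for the upper bound, and the same single-term lower bound with a well-chosen $x_0$. The one tactical difference worth noting is precisely where you place your ``main obstacle'': rather than solving the first-order condition $f'(k)=0$ asymptotically and then justifying the inversion via Potter-type bounds, the paper simply picks an explicit test point
\[
x_1=\frac{n}{R_\gamma^{\leftarrow}\!\bigl(\tfrac{1}{1-\epsilon}(\log n-(1-\epsilon)(1+1/\gamma)\log\log n)\bigr)},
\]
verifies directly that $f'(x_1)<0$ (using monotonicity of $f'$), so that the minimizer satisfies $x^{*}>x_1$, and then uses only the trivial bound $f(x^{*})\ge x^{*}>x_1$. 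This sidesteps the transcendental inversion entirely and reduces the upper bound to a one-line sign check plus regular variation of $R_\gamma^{\leftarrow}$. Your lower-bound choice $x_0=\lambda\,n/R_\gamma^{\leftarrow}(\log n)$ with $\lambda>1+\epsilon$ is a clean variant of the paper's $x_0=(1+\epsilon)n/R_\gamma^{\leftarrow}((1-\epsilon)\log n-\tfrac{1}{\gamma}\log\log n)$; both work by forcing the $e^{R_\gamma(\cdot)}$ term to be $o(x_0)$.
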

\begin{remark}
Note that the functional form in (\ref{eq:nearlyExp}) is different
from the one in (\ref{eq:NI2}) that describes the Weibull case. In
principle, one could study the situations when $\Phi(\cdot)$ grows
faster than three exponential scales, which would make the
distributions of $N$ even closer to the exponential one.  However,
from a practical point of view, these cases will basically be
indistinguishable from the exponential distribution and, thus, we
omit these derivations.
\end{remark}
\begin{proof}
First, we prove the \emph{upper bound}. Following the same approach
as in the proof of Theorem \ref{theorem:1I},  we obtain, for
$0<\epsilon<1$ and $y>0$,
\begin{align}\label{eq:nearlyupperA1}
  \Pr[N>n]  &\leq  \sum_{k=0}^{ \lfloor n/y \rfloor -1 } e^{-k- \frac{1}{1+\epsilon} e^{R_{\gamma} \left(\frac{n}{k+1}\right)}}
+
  o\left( \Pr[N>n] \right).
\end{align}
By using the same argument as in (\ref{eq:modified}), we can choose
$R^{\ast}_{\gamma}(x)=\gamma \int^{x}_{1}R(s)s^{-1}ds, x\geq 1$ and
$R^{\ast}_{\gamma}(\cdot)$ is absolutely continuous,
 strictly increasing with an inverse $R_{\gamma}^{\leftarrow}(\cdot)$.
Theorem 1.5.12 on p.~28 and Proposition 1.5.14 on p.~29 of
\cite{BG87} implies that
 $R_{\gamma}^{\leftarrow}(\cdot)$ is regularly varying with index $1/\gamma$ and is also the asymptotic inverse of
 $R_{\gamma}(\cdot)$.
Therefore,  there exists $y>0$ such that for $0<x<n/y$,
$$x+ \frac{1}{1+\epsilon} e^{R_{\gamma}\left(\frac{n}{x}\right)}
 \geq x+ \frac{1}{1+\epsilon}
 e^{(1-\epsilon)R^{\ast}_{\gamma}\left(\frac{n}{x}\right)}.
 $$

Suppose that $f(x)\eqdef x+  e^{(1-\epsilon)
R^{\ast}_{\gamma}\left(\frac{n}{x}\right)}/(1+\epsilon) $  reaches
the minimum at $x^{\ast}$, and note that
$$
 f'(x) = 1- \frac{1-\epsilon}{1+\epsilon}
 e^{(1-\epsilon)R^{\ast}_{\gamma}\left(\frac{n}{x}\right)} \left(R^{\ast}_{\gamma}\left(\frac{n}{x}\right) \right)'
     \frac{n}{x^2}=1- \frac{1-\epsilon}{1+\epsilon}e^{(1-\epsilon)R^{\ast}_{\gamma}\left(\frac{n}{x}\right)}
      \frac{\gamma R^{\ast}_{\gamma}\left(\frac{n}{x}\right)}{x}
$$
is an increasing function for $x$ on $(0,n/y)$. Now, defining
\begin{align}
x_1 &\eqdef \frac{n}{R_{\gamma}^{\leftarrow}\left(
\frac{1}{1-\epsilon} \left( \log n -
(1-\epsilon)\left(1+\frac{1}{\gamma}\right) \log \log n
 \right) \right)},
\end{align}
 it is easy to check that, for all $n$ large enough, $f'(x_1)$ is
equal to
$$
1-  \frac{\gamma R^{\leftarrow}_{\gamma}\left(\frac{1}{1-\epsilon}
\left(\log n - (1-\epsilon)\left(1+\frac{1}{\gamma}\right)\log \log
n \right) \right)   \left(\log n -
(1-\epsilon)\left(1+\frac{1}{\gamma}\right)\log \log n \right)
}{(1+\epsilon) (\log n)^{(1-\epsilon)(1+\frac{1}{\gamma})}}<0,
$$
which implies that $f(x)'<0$ for $0<x<x_1$ and $n$ large. Thus, the
minimum point $x^{\ast}$ satisfies
\begin{align}\label{eq:nearlyupperA2}
  x^{\ast}> x_1.
\end{align}
 Combining (\ref{eq:nearlyupperA1}) and
(\ref{eq:nearlyupperA2}) yields, for $n$ large enough,
\begin{align}
  \Pr[N>n]  & \leq  \frac{n}{y} e^{1-f(x^{\ast})} +
  o\left( \Pr[N>n] \right) \leq \frac{2n}{y} e^{1 - x_1},\nonumber
\end{align}
resulting in
\begin{equation}
 \log \left(\Pr[N>n]^{-1}\right)\geq \frac{n}{R_{\gamma}^{\leftarrow}\left(
\frac{1}{1-\epsilon} \left( \log n -
(1-\epsilon)\left(1+\frac{1}{\gamma}\right) \log \log n
 \right) \right)} -\log
  \left( \frac{2n}{y} \right)-1.\nonumber
  \end{equation}
  Therefore,  passing $n\to \infty$ and then $\epsilon \to 0$ in
  the preceding inequality yields
\begin{equation}\label{eq:nearlyupperA}
 \log \Pr[N>n]^{-1}  \gtrsim  \frac{n}{R_{\gamma}^{\leftarrow}(\log n)}.
\end{equation}

Next, we prove the \emph{lower bound}. By using the same arguments
as in the proof of the lower bound for Theorem \ref{theorem:2I}, we
obtain, for $n$ large enough,
\begin{align}
 \log \left(\Pr[N>n]^{-1} \right) &\leq  x_0 +\frac{1}{1-\epsilon} \log \left(\Phi\left(\frac{(1+\epsilon)n}{x_0}
  \right)\right)=x_0+\frac{1}{1-\epsilon}e^{ R_{\gamma} \left( \frac{(1+\epsilon)n}{x_0} \right)}, \nonumber
\end{align}
 which, by choosing $$
x_0= \frac{(1+\epsilon)n}{R_{\gamma}^{\leftarrow}\left( (1-\epsilon)
\log n -\frac{1}{\gamma} \log \log n
 \right)},
$$
and noting that $R_{\gamma}\left(
R_{\gamma}^{\leftarrow}(x)\right)\leq x/(1-\epsilon)$ for all $x$
large enough, yields, for $n$ large,
\begin{equation}
 \log \left(\Pr[N>n]^{-1} \right)
  \leq x_0+ \frac{n}{(1-\epsilon)(\log
  n)^{\frac{1}{(1-\epsilon)\gamma}
  }}.\nonumber
\end{equation}
The preceding inequality implies
\begin{equation}\label{eq:nearlylowerA}
 \log \left(\Pr[N>n]^{-1} \right)  \lesssim \frac{n}{R^{\leftarrow}_{\gamma}(\log n)}.
\end{equation}

Finally, combining (\ref{eq:nearlyupperA}) and
(\ref{eq:nearlylowerA}) finishes the proof. \qed
\end{proof}

\subsection{Asymptotics of the Total Transmission Time $T$}\label{ss:asmpT}
In this subsection, we compute the asymptotics of the total
transmission time $T$ based on the previous results on $\Pr[N>n]$.
Our proving technique involves the relationship between $N$ and $T$
described in (\ref{defeq:T}) and the classical large deviation
results. Theorem \ref{theorem:asympT} and Theorem \ref{theorem:T}
characterize the exact asymptotics and logarithmic asymptotics for
the very heavy case, respectively, and Theorem
\ref{theorem:TWeibull} derives the result for the moderate heavy
(Weibull) case. Interestingly, we want to point out that, unlike
Theorems \ref{theorem:asympT} and \ref{theorem:T} requiring no
conditions on $A$ (Theorem \ref{theorem:asympT} needs
$\expect[A]<\infty$), the minimum conditions needed for Theorem
\ref{theorem:TWeibull}, as shown by Proposition
\ref{prop:WeibullBalance},  basically involve a balance  between the
tail decays of $\Pr[A>x]$ and $\Pr[L>x]$.

 Similarly, the corresponding results on $T$ can be derived
for the other statements on $N$, e.g., Propositions
\ref{eq:proposition1}, \ref{eq:proposition2}, \ref{prop:gtrOne},
\ref{prop:betweenHalfOne}, and Theorem \ref{theorem:nearly}. But, to
avoid lengthy expositions and repetitions, we omit this derivations.
In the following, let $\vee \equiv \max$.
\begin{theorem} \label{theorem:asympT}
If $\expect\left[U^{(\alpha \vee 1)+\theta}\right]<\infty$, $\expect
\left[A^{1+\theta} \right]<\infty$ and $\expect\left[L^{\alpha
+\theta} \right]<\infty$ for some $\theta>0$, then, under the same
conditions as in Theorem \ref{theorem:asympN2} i), i.e.,
$\bar{F}^{-1}(x) \thicksim \Phi\left( \bar{G}^{-1}(x)\right)$ with
$\Phi(x)$ being regularly varying of index $\alpha>0$, we obtain, as
$t\rightarrow \infty$,
 \begin{equation}\label{eq:asympT}
\Pr[T>t]\thicksim\frac{\Gamma( \alpha+1 )(\expect[U+A])^{\alpha}}{
\Phi(t)}.
\end{equation}
\end{theorem}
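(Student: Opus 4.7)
The plan is to exploit the asymptotic equivalence $T \approx N\cdot c$, where $c \eqdef \expect[A+U]<\infty$ (finite thanks to $\expect[A^{1+\theta}]<\infty$). Combined with $\Pr[N>n] \sim \Gamma(\alpha+1)/\Phi(n)$ from Theorem~\ref{theorem:asympN2}\,i) and the regular variation of $\Phi$ of index $\alpha$, this leads to the informal derivation
\begin{equation*}
\Pr[T>t] \;\approx\; \Pr[N > t/c] \;\sim\; \frac{\Gamma(\alpha+1)}{\Phi(t/c)} \;\sim\; \frac{\Gamma(\alpha+1)\,c^{\alpha}}{\Phi(t)}.
\end{equation*}
The bulk of the argument consists in making the first ``$\approx$'' rigorous by showing that the fluctuations of $T$ around $(N-1)c$ are negligible at the scale $1/\Phi(t)$.

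For the lower bound I would fix $\epsilon\in(0,1)$, set $n_- \eqdef \lceil t/(c(1-\epsilon))\rceil$, write $S_n \eqdef \sum_{i=1}^{n}(A_i+U_i)$, and use $T \geq S_{n_-}$ on $\{N>n_-\}$:
\begin{equation*}
\Pr[T>t] \;\geq\; \Pr[S_{n_-}>t,\,N>n_-] \;\geq\; \Pr[N>n_-] - \Pr[S_{n_-}\leq t].
\end{equation*}
Since $\expect[S_{n_-}]=n_- c \sim t/(1-\epsilon)$, the event $\{S_{n_-}\leq t\}$ represents a deviation from the mean of order $\Theta(n_-)$; a centered $L^p$ moment bound applied separately to $\sum_{i=1}^{n_-}A_i$ and $\sum_{i=1}^{n_-}U_i$ (via von Bahr--Esseen or Rosenthal, depending on whether the relevant exponent is below or above $2$) will yield $\Pr[S_{n_-}\leq t] = o(1/\Phi(t))$. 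The regular variation identity $\Phi(n_-)\sim\Phi(t)(c(1-\epsilon))^{-\alpha}$ together with Theorem~\ref{theorem:asympN2} then gives $\Pr[T>t]\,\Phi(t) \gtrsim \Gamma(\alpha+1)(c(1-\epsilon))^{\alpha}$; letting $\epsilon\to 0$ closes the lower bound.

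For the upper bound, with $n_+\eqdef\lfloor t/(c(1+\epsilon))\rfloor$, I decompose
\begin{equation*}
\Pr[T>t] \;\leq\; \Pr[N>n_+] + \Pr[T>t,\,N\leq n_+].
\end{equation*}
On $\{N\leq n_+\}$ one has $T \leq S_{n_+}+L$, so the second term is at most $\Pr[S_{n_+}>t(1-\eta)]+\Pr[L>t\eta]$ for any $\eta>0$. By Markov and $\expect[L^{\alpha+\theta}]<\infty$, $\Pr[L>t\eta] = O(t^{-\alpha-\theta}) = o(1/\Phi(t))$, since slowly varying functions are dominated by $t^{\theta}$. Choosing $\eta$ small enough (e.g.\ $\eta<\epsilon/(1+\epsilon)$) so that $t(1-\eta)$ exceeds $\expect[S_{n_+}]=n_+ c$ by a positive multiple of $n_+$, the $S_{n_+}$ term is handled by the same moment-based deviation estimates. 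Regular variation of $\Phi$ then yields $\Pr[T>t]\,\Phi(t) \lesssim \Gamma(\alpha+1)(c(1+\epsilon))^{\alpha}$, and $\epsilon\to 0$ completes the proof.

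The main obstacle is calibrating the $L^p$ estimates so that $\Pr[|S_n - nc|>n\delta]$ decays faster than $1/\Phi(n)\sim n^{-\alpha}$ (up to slow variation). This is what dictates the asymmetric moment hypothesis: for the $A$-sum, the moment $1+\theta$ suffices because the $A_i$'s for $i<N$ are implicitly constrained by $A_i<L$ (and von Bahr--Esseen gives the right rate when $\alpha\leq 1$), whereas for the unconstrained $U$-sum one needs moment $(\alpha\vee 1)+\theta$ in order to invoke Rosenthal-type bounds that beat $n^{-\alpha}$ when $\alpha>1$. Tuning these two inequalities so that the total error is uniformly $o(1/\Phi(t))$ across both the upper- and lower-bound steps is the delicate technical point of the proof.
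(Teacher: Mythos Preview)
Your overall strategy and the lower-bound decomposition match the paper's. One correction there: the moment inequalities you propose for $\Pr[S_{n_-}\leq t]$ suffer from the same defect described below, but in this direction there is a trivial fix that the paper uses---since $c-(A_i+U_i)\leq c$ is bounded above, a one-sided Chernoff bound gives $\Pr[S_{n_-}\leq t]\leq e^{-\eta t}$ for some $\eta>0$, which is more than enough.

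The upper bound, however, has a genuine gap. After freezing $n=n_+\sim t/c$ you need $\Pr[S_{n_+}-n_+c>\delta' n_+]=o(1/\Phi(t))$, but with only $\expect[A^{1+\theta}]<\infty$ and $\expect[U^{(\alpha\vee1)+\theta}]<\infty$ the best that von Bahr--Esseen, Rosenthal, or Fuk--Nagaev deliver at deviation scale $\Theta(n)$ with $n\sim t$ is $O(t^{-\theta})$ for the $A$-part and $O(t^{1-\alpha-\theta})$ or $O(t^{-(\alpha+\theta)/2})$ for the $U$-part; neither is $o(t^{-\alpha})$ when $\theta$ is small, which the hypothesis permits. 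Your own remark that the $A_i$'s for $i<N$ are constrained by $A_i<L$ is exactly the missing ingredient, but the bound $T\leq S_{n_+}+L$ throws it away, since $S_{n_+}$ contains unconstrained $A_i$'s. The paper implements the constraint by writing $\sum_{i<N}A_i=\sum_{i<N}(A_i\wedge L)$, splitting off $\{L>\epsilon t\}$, and then bounding the remaining sum of $A_i\wedge(\epsilon t)$ via a truncated-service $D/GI/1$ workload argument, which yields $o(t^{-\beta})$ for every $\beta$. For the $U$-sum the paper does \emph{not} fix $n$: it keeps $\Pr\bigl[\sum_{i=1}^{N}(U_i-\expect[U])>\delta t\bigr]=\sum_n\Pr[N=n]\,\Pr\bigl[\sum_{i=1}^n X_i>\delta t\bigr]$ and combines Nagaev's linear-in-$n$ bound with $\expect[N]<\infty$ (when $\alpha>1$) or with a truncated partial moment $\sum_{n\leq t^\zeta}n\,\Pr[N=n]=O(t^{\zeta(1-\alpha+\sigma)})$ (when $\alpha\leq1$). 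This averaging over $N$ is precisely what recovers the factor of $t$ you lose by fixing $n=n_+$.
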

\begin{remark}
Note that $\expect \left[L^{\alpha+\theta} \right]<\infty$ is
basically a minimum condition for $\alpha>1$ since $\expect
\left[L^{\alpha-\theta} \right]=\infty$ implies $\expect
\left[T^{\alpha-\theta} \right]=\infty$ because of $T\geq L$, which
would contradict (\ref{eq:asympT}).
\end{remark}

The \textbf{proof} is presented in Subsection \ref{ss:asympT}.

\begin{theorem}\label{theorem:T}
Under the same conditions of Theorem \ref{theorem:1I}, i.e.,
  the eventually non-decreasing function $\Phi(x)\eqdef e^{l(x)}$ satisfies (\ref{eq:conditionI2})
  where $l(x)$ is slowly varying with
 \begin{equation}\label{eq:lxB}
  \lim_{x \to
  \infty} \frac{l\left( \frac{x}{l(x)} \right)}{l(x)} = 1,
  \end{equation}
and in addition, if $\Pr[L>x]=O\left(\Phi(x)^{-(\delta+1)}\right)$
and $\Pr[ U> x ]=O\left( \Phi(x)^{-\left( \delta+1\right)} \right)
$, $\delta>0$, then, we obtain
  \begin{equation}\label{eq:T}
 \lim_{t \rightarrow \infty}\frac{\log \left(\Pr[T>t]^{-1}\right)}{ \log(\Phi
   (t))}=1.
   \end{equation}
\end{theorem}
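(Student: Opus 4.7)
The overall strategy is to lift Theorem \ref{theorem:1I} on $N$ to a statement on $T$ via the representation $T=\sum_{i=1}^{N-1}(A_i+U_i)+L$ together with the deterministic bound $A_i<L$ for $i<N$. Since $l(x):=\log\Phi(x)$ is slowly varying, rescaling the argument by $t\mapsto\sqrt{t}$ or $t\mapsto t/\Phi(t)^{a}$ preserves the rate $l(t)$ up to a factor of $1+o(1)$, which is exactly what allows crude union-bound estimates to recover the sharp logarithmic asymptotic.

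For the upper bound $\log\Pr[T>t]^{-1}\gtrsim l(t)$, the bound $A_i<L$ gives $T\leq NL+\sum_{i=1}^{N-1}U_i$, hence
\[
\Pr[T>t]\;\leq\;\Pr[NL>t/2]+\Pr\!\Bigl[\textstyle\sum_{i=1}^{N-1}U_i>t/2\Bigr].
\]
The first summand is handled by a further Boole inequality at level $\sqrt t$: Theorem \ref{theorem:1I} gives $\Pr[N>\sqrt t]\leq e^{-(1-\epsilon)(1+o(1))l(t)}$ and the hypothesis on $\bar F$ gives $\Pr[L>\sqrt t/2]\leq e^{-(\delta+1)(1+o(1))l(t)}$. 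For the second summand I use $\sum_{i=1}^{n}U_i\leq n\max_{i\leq n}U_i$ to get $\Pr[\sum_{i=1}^{N-1}U_i>t/2]\leq\Pr[N>n_t]+n_t\Pr[U>t/(2n_t)]$, and pick $n_t=t/\Phi(t)^{a}$ for a small $a>0$ balancing the two terms; a short calculation (explicit in the Pareto case $l(x)=c\log x$ and extendable by the slow-variation representation) shows that an $a$ making both pieces $\leq e^{-(1-\epsilon)l(t)}$ exists precisely because $\delta>0$.

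For the lower bound $\log\Pr[T>t]^{-1}\lesssim l(t)$ I construct an event of probability at least $e^{-(1+\epsilon)l(t)}$ on which $T>t$. Choose $l_0$ with $\bar G(l_0)=1/t$; by (\ref{eq:conditionI2}) and slow variation,
\[
\Pr[L\geq l_0]\;=\;\bar F(l_0)\;\geq\; e^{-(1+o(1))\,l(\bar G(l_0)^{-1})}\;=\;e^{-(1+o(1))\,l(t)}.
\]
If $\bar G(t)\geq 1/t$, then $l_0\geq t$ and $T\geq L\geq l_0\geq t$ automatically. Otherwise, on $\{L\in[l_0,2l_0]\}$ the conditional law of $N$ given $L$ is geometric with mean of order $t$, so $N\gtrsim t$ with a constant conditional probability; on this latter event I bound $\sum_{i=1}^{N-1}(A_i+U_i)$ from below by a Chebyshev/concentration argument using the a.s. bound $A_i<L\leq 2l_0$ and the hypothesis $\Pr[U>x]=O(\Phi(x)^{-(\delta+1)})$, forcing $T>t$ with positive conditional probability.

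The main obstacle is this last step: producing a concentration/lower-bound statement for the random sum $\sum(A_i+U_i)$ in the regime $\bar G(t)<1/t$, even though $E[A+U]$ can be infinite in the very-slowly-varying case. The role of $\delta>0$ is precisely to deliver just enough integrability on $U_i$ (and on $A_i$ via the truncation $A_i<L$) to validate such concentration at the log scale; a secondary, purely bookkeeping nuisance is tracking the slow-variation identities carefully enough to land on the constant $1$ in (\ref{eq:T}) rather than on a slightly off factor.
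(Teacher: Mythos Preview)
Your upper bound for $\Pr[NL>t/2]$ via a split at $\sqrt{t}$ is fine and indeed exploits slow variation correctly. The problem is the bound for $\sum_{i<N}U_i$: the crude estimate $\sum_{i\le n}U_i\le n\max_i U_i$ cannot recover the constant $1$ in (\ref{eq:T}). Work it out in your own test case $l(x)=c\log x$ with $n_t=t/\Phi(t)^a=t^{1-ac}$: then $l(n_t)=(1-ac)\,l(t)$, so $\Pr[N>n_t]\le e^{-(1-\epsilon)(1-ac)l(t)}$ forces $ac$ small, while $n_t\Pr[U>t/(2n_t)]\approx t^{\,1-ac-(\delta+1)ac^2}$ forces $ac[1+(\delta+1)c]>1+(1-\epsilon)c$; these two constraints are incompatible for small $\epsilon$. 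No choice of $a$ makes both pieces $\le e^{-(1-\epsilon)l(t)}$. The paper instead takes $n_t=\lceil t/l(t)\rceil$ (so that (\ref{eq:lxB}) gives $l(n_t)\sim l(t)$ exactly), truncates $U_i$ at level $t/l(t)$, and uses a Chernoff bound with $h\asymp l(t)/t$ on the truncated sum; this yields $I_2=o(\Phi(t)^{-1})$ without losing any constant. The same truncation--Chernoff argument handles $\sum (A_i\wedge L)$.

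For the lower bound you are making life much harder than necessary, and the ``main obstacle'' you identify is avoidable. Since $T\ge\sum_{i=1}^{N-1}(A_i\wedge 1)$ and the $A_i\wedge 1$ are bounded i.i.d.\ with positive mean $\mu:=\expect[A\wedge 1]$, one has
\[
\Pr[T>t]\;\ge\;\Pr\Bigl[N>\tfrac{2t}{\mu}+1\Bigr]-\Pr\Bigl[\sum_{i\le 2t/\mu}(A_i\wedge 1)\le t\Bigr],
\]
and the second term is exponentially small by a standard Chernoff bound. Theorem~\ref{theorem:1I} and slow variation of $l$ then give $\log\Pr[T>t]^{-1}\lesssim l(2t/\mu)\sim l(t)$. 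This sidesteps entirely the issue of $\expect[A]$ or $\expect[U]$ being infinite and requires no conditioning on $L$.
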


\begin{remark}
This result implies parts (1:1), (2:1) and (2:2) of Theorem 2.1 in
\cite{Asmussen07} and extends Theorem 2 in \cite{PJ07RETRANS}.
Furthermore, it shows that, if $\log \Pr[L>x]^{-1} \approx \alpha
\log\Pr[A>x]^{-1}$, meaning that the hazard functions of $L$ and $A$
are asymptotically linear,  the distribution tails of the number of
transmissions and total transmission time are essentially power
laws. Thus, the system can exhibit high variations and possible
instability, e.g., when $0<\alpha<2$, the transmission time has an
infinite variance and, when $0<\alpha<1$, it does not even have a
finite mean.
\end{remark}

\begin{remark}
It is easy to understand that, if the data sizes (e.g., files,
packets) follow heavy-tailed distributions, the total transmission
time is also heavy-tailed. However, from these two theorems, we see
that even if the distributions of the data and channel
characteristics are highly concentrated, e.g.,  when they are
asymptotically proportional on the logarithmic scale (see
Corollary~\ref{co:normal} in Subsection \ref{ss:VHA}), the
heavy-tailed transmission delays can still arise.
\end{remark}
The \textbf{proof} is presented in Subsection \ref{ss:ProofSlowT}.

\begin{theorem}\label{theorem:TWeibull}
Under the same conditions of Theorem \ref{theorem:2I}, i.e.,
 the eventually non-decreasing function $\Phi(x)\eqdef e^{R_{\beta}(x)}$ satisfies (\ref{eq:conditionI2})
    where $R_{\beta}(x)=x^{\beta}l(x)$, $\beta>0$
   is regularly varying with $l(x)$ satisfying
\begin{equation}\label{eq:lxC}
  \lim_{x \to
  \infty} \frac{l\left( \left( \frac{x}{l(x)} \right)^{\frac{1}{1+\beta}} \right)}{l(x)} = 1,
  \end{equation}
and in addition, if $\expect[A]<\infty$,
 $\Pr[U> x]=O \left( e^{-
(\log \Phi(x))^{(1+\delta)/(\beta+1)}} \right)$, $\delta>0$,  and
$\Pr[L>x]=O\left( e^{- x^{\xi}} \right)$, $\Pr[A>x]=O\left( e^{-
x^{\zeta}} \right)$ with $\xi>\beta/(\beta+1), \zeta \geq 0$
satisfying $(1-\zeta)\beta < \xi$, then, we obtain
   \begin{equation}\label{eq:TWeibull}
   \lim_{n \rightarrow \infty}\frac{ \log \left(\Pr[T>t]^{-1}\right)}{ \left(\log\Phi(
   t) \right)^{\frac{1}{\beta+1}}}= \frac{ \beta^{\frac{1}{\beta+1}} +
\beta^{-\frac{\beta}{\beta+1}}
}{(\expect[A+U])^{\frac{\beta}{\beta+1}}}.
   \end{equation}
\end{theorem}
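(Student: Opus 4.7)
The strategy is to reduce (\ref{eq:TWeibull}) to the tail bound for $N$ already proved in Theorem~\ref{theorem:2I} via a law-of-large-numbers identification $T \approx cN$ where $c \eqdef \expect[A+U]$. Because $l$ is slowly varying, substituting $n=t/c$ into Theorem~\ref{theorem:2I} yields the target constant
\[
\log\Pr[N>t/c]^{-1} \sim \frac{\beta^{1/(\beta+1)}+\beta^{-\beta/(\beta+1)}}{c^{\beta/(\beta+1)}}(\log\Phi(t))^{1/(\beta+1)}.
\]
It thus suffices to establish the two-sided logarithmic equivalence $\log\Pr[T>t]^{-1} \sim \log\Pr[N>t/c]^{-1}$, and the remaining $\epsilon$'s will be collected by passing $\epsilon \to 0$ at the end.

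For the \emph{lower bound} on $\Pr[T>t]$, I would prove $\Pr[T>t] \gtrsim (1-o(1))\Pr[N>(1+\epsilon)t/c]$ for arbitrary $\epsilon>0$. Conditionally on $N=n$ and $L=\ell$ with both large, the $U_i$'s are iid copies of $U$ independent of everything else, while the $A_i$'s for $i<N$ are iid with the distribution of $A$ given $A<\ell$, whose mean converges to $\expect[A]$ as $\ell \to \infty$ (note that $\{N\to\infty\}$ forces $L\to\infty$ in probability). A Chebyshev/weak-LLN step then gives $(N-1)^{-1}\sum_{i=1}^{N-1}(A_i+U_i)\to c$ in probability, so $\{N>(1+\epsilon)t/c\}$ implies $T>t$ with conditional probability tending to one. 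Theorem~\ref{theorem:2I} then delivers the desired logarithmic lower bound.

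For the matching \emph{upper bound}, I would decompose
\[
\Pr[T>t] \leq \Pr[N>(1-\epsilon)t/c] + \Pr[T>t,\ N\leq n_0], \qquad n_0\eqdef(1-\epsilon)t/c,
\]
handle the first term directly by Theorem~\ref{theorem:2I}, and further split the second on $\{L>\epsilon t\}$ versus $\{L\leq \ell^*\}$ for an intermediate threshold $\ell^* \eqdef t^\alpha$. The event $\{L>\epsilon t\}$ contributes at most $O(e^{-(\epsilon t)^\xi})$, and $\xi>\beta/(\beta+1)$ ensures that this exponent beats the target rate. On $\{L\leq\ell^*\}$ each $\hat A_i\leq \ell^*$, so I would apply a Bernstein-type bound to $\sum(\hat A_i+U_i)$ augmented by a one-big-jump bound $n_0\Pr[A>\lambda]$ for a second truncation level of $A$; the very light tail $\Pr[U>x]=O(e^{-(\log\Phi(x))^{(1+\delta)/(\beta+1)}})$ makes the $U$ contribution negligible since its exponent $\beta(1+\delta)/(\beta+1)$ already exceeds $\beta/(\beta+1)$.

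The main obstacle is engineering the above truncation so that every error term beats $t^{\beta/(\beta+1)}$ simultaneously. The Bernstein exponent from the bounded $\hat A_i$'s scales like $t/\ell^*$, pushing $\alpha$ small, while $\Pr[L>\ell^*]=O(e^{-\ell^{*\xi}})$ pushes $\alpha$ large; the extra one-big-jump truncation of $A_i$ introduces the exponent $\zeta$ into the joint optimization via a term $n_0 e^{-\lambda^\zeta}$. The window in which a common $\alpha$ (and $\lambda$) exists is exactly $\xi>\beta/(\beta+1)$ together with $(1-\zeta)\beta<\xi$, matching the hypotheses and essentially tight in the sense of the forthcoming Proposition~\ref{prop:WeibullBalance}; verifying this optimization rigorously is the heart of the argument.
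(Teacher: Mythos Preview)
Your overall architecture matches the paper's: reduce $T$ to $N$ via $T\approx cN$ with $c=\expect[A+U]$, get the lower bound by a LLN-type argument (the paper uses a direct Chernoff bound on $\Pr[\sum_{i\le(1+\delta)t/c}(A_i+U_i)\le t]$, which is a bit cleaner than your conditional weak-LLN but gives the same logarithmic conclusion), and get the upper bound by splitting off $\Pr[N>(1-\epsilon)t/c]$ and showing the remaining large-deviation terms are $o(e^{-(\log\Phi(t))^{1/(\beta+1)}})$.

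The gap is in your upper-bound error analysis. Your claim that a \emph{single} truncation $\ell^*=t^\alpha$ for $L$ together with a \emph{single} one-big-jump level $\lambda$ for $A$, using the plain bounded-variable Bernstein exponent $t/\ell^*$, produces exactly the feasibility window $(1-\zeta)\beta<\xi$ is not correct. With your ingredients the constraints are (i) $\alpha\xi>\beta/(\beta+1)$ from $\Pr[L>\ell^*]$, (ii) $t/\lambda\gg t^{\beta/(\beta+1)}$ from Bernstein, and (iii) $\lambda^\zeta\gg t^{\beta/(\beta+1)}$ from $n_0\Pr[A>\lambda]$. Constraints (ii)--(iii) alone force $\zeta>\beta$, while using only (i) with Bernstein at level $\ell^*$ forces $\xi>\beta$; neither recovers $(1-\zeta)\beta<\xi$. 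For instance with $\beta=1$, $\xi=\zeta=0.6$ the theorem's hypotheses hold but no admissible $(\alpha,\lambda)$ exists in your scheme.

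What the paper does differently is twofold. First, it replaces the crude Bernstein exponent $t/u$ by a sharper Nagaev-type bound (Lemma~\ref{thrm:jelenkovic2}) that exploits the Weibull tail of $A$ and yields $\Pr[\sum_{i\le n_0}A_i\wedge u>ct]\le Ce^{-\delta u^{\zeta-1}t}$. Second, instead of a single threshold it \emph{integrates} over the value of $L$, i.e.\ writes $\int \Pr[\sum A_i\wedge u>ct]\,d\Pr[L\le u]$ and after integration by parts bounds by $\sup_u e^{-u^\xi-\delta' u^{\zeta-1}t}$. Optimizing in $u$ gives exponent of order $t^{\xi/(\xi+1-\zeta)}$, and $\xi/(\xi+1-\zeta)>\beta/(\beta+1)$ is exactly $(1-\zeta)\beta<\xi$. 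So the sharp condition comes from the \emph{combination} of the refined exponent $u^{\zeta-1}t$ and a continuum of thresholds (integration, or equivalently a dyadic decomposition of $L$); a single pair $(\ell^*,\lambda)$ cannot reproduce it.
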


\begin{remark}
This theorem  implies part (1:2) of Theorem 2.1 in
\cite{Asmussen07}, and provides a more precise logarithmic
asymptotics instead of a double logarithmic limit. Furthermore,  it
is easy to check that the condition $(1-\zeta)\beta < \xi$ holds in
two special cases: (i) if $\zeta \geq \beta/(\beta+1)$ and $\xi
>\beta/(\beta+1)$ or (ii) if $\xi>\beta$ and $\zeta=0$ (assuming no conditions for
$\Pr[A>x]$ beyond $\expect[A]<\infty$).
\end{remark}

The \textbf{proof} is presented in Subsection \ref{ss:ProofWeibull}.
 Basically, the condition $(1-\zeta)\beta < \xi$ (or
equivalently $\xi/(\xi+1-\zeta)>\beta/(\beta+1)$) is needed since
the following proposition shows that $\Pr[T>t]$ could have a heavier
tail than predicted by (\ref{eq:TWeibull}) if $(1-\zeta)\beta >
\xi$.

\begin{proposition}\label{prop:WeibullBalance}
 If $\Pr[L>x]= e^{- x^{\xi}} $ and $\Pr[A>x]= e^{-
x^{\zeta}}$ with $0<\xi, \zeta <1$, then, as $t \to \infty $,
$$\Pr[T>t] \gtrsim e^{-2 t^{\xi/(\xi+1-\zeta)}}.$$
\end{proposition}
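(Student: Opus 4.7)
The plan is to lower-bound $\Pr[T>t]$ by the probability of an explicit event on which $T>t$ can be checked deterministically, and whose probability factorizes through the mutual independence of $L$, $(A_i)_{i\geq 1}$, and $(U_i)_{i\geq 1}$ together with the explicit Weibull forms of $\bar{F}$ and $\bar{G}$. The underlying intuition is that the transmission time $T$ can be made large in two complementary ways, either by having $L$ itself of order $t^{1/(\xi+1-\zeta)}$, or by forcing many retransmissions each contributing a moderately large $A_i$; the exponent $\xi/(\xi+1-\zeta)$ arises from optimally balancing the two.

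Write $\gamma = \xi/(\xi+1-\zeta)$. In the easy sub-case $\xi \leq \zeta$ one has $\xi \leq \gamma$, so the trivial bound $\Pr[T>t] \geq \Pr[L>t] = e^{-t^{\xi}} \geq e^{-2t^{\gamma}}$ for large $t$ already suffices. In the main sub-case $\xi > \zeta$, I set $\rho = (1-\zeta)/\xi$ and
\[
\ell_t = (\rho t)^{1/(\xi+1-\zeta)}, \qquad a_t = \ell_t(1-1/k_t), \qquad m_t = \lceil (t-\ell_t)/a_t \rceil + 1,
\]
with $k_t \to \infty$ slowly (say $k_t = \log t$). I then consider the event
\[
E_t = \{L \in [\ell_t, 2\ell_t]\} \cap \bigcap_{i=1}^{m_t} \{A_i \in [a_t, \ell_t)\} \cap \{A_{m_t+1} \geq 2\ell_t\}.
\]
On $E_t$ one has $A_i < \ell_t \leq L < 2\ell_t \leq A_{m_t+1}$ for every $i \leq m_t$, whence $N = m_t+1$ deterministically, and therefore $T = \sum_{i=1}^{m_t}(A_i+U_i) + L \geq m_t a_t + \ell_t > t$.

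By the mutual independence of $L, A_1, \ldots, A_{m_t+1}$ and the explicit forms $\Pr[L>\ell] = e^{-\ell^{\xi}}$ and $\Pr[A>\ell] = e^{-\ell^{\zeta}}$, the probability $\Pr[E_t]$ factorizes, and elementary tail estimates (e.g.\ $\Pr[L \in [\ell_t, 2\ell_t]] \geq e^{-\ell_t^{\xi}}/2$, and $\Pr[A \in [a_t, \ell_t)] \geq e^{-a_t^{\zeta}}/2$ once the gap $\ell_t^{\zeta} - a_t^{\zeta}$ exceeds $1$) yield
\[
\log \Pr[E_t]^{-1} \leq \ell_t^{\xi} + m_t a_t^{\zeta} + (2\ell_t)^{\zeta} + O(m_t).
\]
Using $m_t a_t = t - \ell_t + o(t)$ and $a_t \sim \ell_t$, the leading pair is asymptotically $\ell_t^{\xi} + (t-\ell_t)\ell_t^{\zeta-1}(1+o(1))$, and my choice of $\ell_t$ minimizes $\ell^{\xi} + t\ell^{\zeta-1}$ to the value $f(\rho) t^{\gamma}$ with $f(\rho) := \rho^{\gamma-1}(1+\rho)$. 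A one-line calculus computation gives $(\log f)'(\rho) = -(\log \rho)/(1+\rho)^2$, so $f$ attains its unique maximum at $\rho = 1$ with $f(1) = 2$; hence $f(\rho) \leq 2$ for all admissible $\rho$. The remaining terms $(2\ell_t)^{\zeta}$ and $O(m_t)$ are both of order $o(t^{\gamma})$, so $\log \Pr[E_t]^{-1} \leq 2t^{\gamma}(1+o(1))$, whence $\Pr[T>t] \geq \Pr[E_t] \gtrsim e^{-2 t^{\gamma}}$.

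The subtlest point, and what I expect to be the main obstacle, is the boundary case $\rho = 1$ (equivalently $\xi+\zeta=1$ with $\xi > 1/2$), where $f(\rho) = 2$ is attained exactly and positive lower-order corrections could in principle threaten the strict $\gtrsim e^{-2t^{\gamma}}$ assertion. In that case I would fall back on a complementary large-deviations argument: choosing $y_t = (\log(2t/\expect[A]))^{1/\zeta}$, conditional on $L \in [y_t, 2y_t]$ the variable $N-1$ is geometric with mean $\sim 2t/\expect[A]$, and since the $A_i$'s for $i < N$ are conditionally i.i.d.\ with distribution $A \mid A < L$, the law of large numbers yields $\Pr[\sum_{i=1}^{N-1} A_i \geq t \mid L \in [y_t, 2y_t]] \to 1$; integrating gives $\Pr[T>t] \gtrsim e^{-(\log t)^{\xi/\zeta}}$, which overwhelmingly dominates $e^{-2t^{\gamma}}$ since $(\log t)^{\xi/\zeta}$ is polylogarithmic while $t^{\gamma}$ is polynomial. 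Either route completes the proof.
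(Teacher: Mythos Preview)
Your approach is correct and is the same core strategy as the paper's: force $T>t$ by an explicit event on which $L$ is moderately large and the first $\approx t/\ell$ availability periods $A_i$ all fall just short of $L$, then factorize by independence and optimize the length scale. The paper, however, does this in five lines with no case split: for any $\delta,y>0$,
\[
\Pr[T>t]\ \geq\ \bigl(\Pr[y<A<(1+\delta)y]\bigr)^{t/y}\,\Pr[L>(1+\delta)y],
\]
and taking $y=t^{1/(\xi+1-\zeta)}$ gives $\gtrsim e^{-(1+(1+\delta)^{\xi})t^{\gamma}}$, after which $\delta\downarrow 0$ yields the stated bound. Your separate treatment of $\xi\le\zeta$, the final success event $\{A_{m_t+1}\ge 2\ell_t\}$, and the fallback at $\rho=1$ are all unnecessary. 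On the other hand, your optimization does buy something real: when $\rho\neq 1$ you obtain the strictly smaller constant $f(\rho)<2$, which delivers $\Pr[T>t]\gtrsim e^{-2t^{\gamma}}$ in the strict ratio sense, whereas the paper's displayed constant $1+(1+\delta)^{\xi}$ exceeds $2$ for every fixed $\delta$ and the final $\delta\downarrow 0$ is, taken literally, only a logarithmic statement. So your extra work is not wasted, merely more than the proposition's illustrative purpose calls for.
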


\begin{proof}
It is easy to see that, for $\delta, y>0$,
\begin{align}
\Pr\left[ T>t \right]&\geq \Pr\left[ T>t, y<A_i<(1+\delta)y, 1\leq
i\leq \frac{t}{y}, L>(1+\delta)y \right] \nonumber\\
&\geq
\left(\Pr[y<A<(1+\delta)y]\right)^{\frac{t}{y}}\Pr[L>(1+\delta)y],
\nonumber
\end{align}
which, by noting that $\Pr[A>x]= e^{- x^{\zeta}}$ with $\zeta>0$,
yields
\begin{align}
\Pr\left[ T>t \right] &\gtrsim
\left(\Pr[A>y]\right)^{\frac{t}{y}}\Pr[L>(1+\delta)y] =e^{-\left(
\frac{t}{y} y^{\zeta} +y^{\xi}\right)}.
\end{align}
Choosing $y=t^{1/(\xi+1-\zeta)}$ finishes the proof. \qed
\end{proof}

\section{Engineering Implications}\label{s:APP}
As already stated in the introduction, retransmissions are the
integral component of many modern networking protocols on all
communication layers from the physical to the application one. In
our recent work
\cite{Jelen07e2e,PJ07RETRANS,Jelen07ALOHA,Jelen07TRe2e}, we have
shown that these protocols may result in heavy-tailed (e.g., power
law) delays even if all the system components are light-tailed
(superexponential). More specifically,  from an engineering
perspective, our main discovery is the matching between the
statistical characteristics of the channel and transmitted data
(e.g., packets). Basically, one can expect good or bad delay
performance measured by the existence of $\alpha$-moments for $N$
and $T$ if $\alpha \log \Pr[A>x]
> \log \Pr[L>x]$ or $\alpha  \log \Pr[A>x] < \log \Pr[L>x]$, respectively.
 Note that, if $\alpha<1$, then the system could experience zero
throughput.

On the network application layer, most of us have experienced the
connection failures while downloading a large file from the
Internet. This issue has been already recognized in practice where
software for downloading sizable documents was developed that would
save the intermediate data (checkpoints) and resume the download
from the point when the connection was broken. However, our results
emphasize that, in the presence of frequently failing connections,
the long delays may arise even when downloading relatively small
documents. Hence, we argue that one may need to adopt the
application layer software for the wireless environment by
introducing checkpoints even for small to moderate size documents.

Furthermore, on the physical layer, it is well known that wireless
links, especially for low-powered sensor networks, have higher error
rates than the wired counterparts. This may result in large delays
on the data link layer due to the (IP) packet variability and
channel failures. Therefore, our results suggest that packet
fragmentation techniques need to be applied with special care since:
if the packets are too small, they will mostly contain the packet
header, which can limit
 the useful throughput; if the packets are too large, power law delays can
deteriorate the quality of transmission. When the codewords, the
basic units of packets in the physical layer, are much smaller than
the maximum size of the packets, our results show that the number of
retransmissions could be power law, which challenges the traditional
model that assumes a geometric number of retransmissions. We believe
that short codewords are realistic assumption for sensor networks,
where complicated coding schemes are unlikely since the nodes have
very limited computational power. In reality, packet sizes may have
an upper limit (e.g., WaveLAN's maximum transfer unit is $1500$
bytes), this situation may result in truncated power law
distributions for $T$ and $N$ in the main body with a stretched
(exponentiated) support in relation to the support of $L$ (see
Example~$3$ in Section~IV of \cite{PJ07RETRANS}) and, thus, may
result in very long, although, exponentially bounded delays. The
impact of truncated heavy-tailed distributions on queueing behavior
was quantified in \cite{Je99}.

On the medium access control layer, ALOHA is a widely used protocol
that provides a contention management scheme for multiple users
sharing the same medium. Once a user detects a collision, it will
back off for a random (exponential) period of time before trying to
retransmit the collided packet. Due to its simplicity and
distributed nature, ALOHA is the basis of many other protocols, such
as CSMA/CD. We discovered a new phenomenon in \cite{Jelen07ALOHA}
that a basic finite population ALOHA model with variable size
(exponential) packets is characterized by power law transmission
delays, possibly even resulting in zero throughput; see Theorem 1
and Example 1 in \cite{Jelen07ALOHA} that characterizes and
illustrates the observation respectively. This power law effect
might be diminished, or perhaps eliminated, by reducing the
variability of packets. However, we also show in \cite{Jelen07ALOHA}
that even a slotted (synchronized) ALOHA with packets of constant
size can exhibit power law delays when the number of active users is
random; see Theorem 2 and Example 2 in \cite{Jelen07ALOHA}. The
ALOHA system is a generalization of our study in this paper, since,
informally, it can be viewed as the state dependent version of the
model considered here where the distributions of $L$ and $A$ depend
on the state of the system.

On the transport layer, most of the network protocols (e.g., TCP)
use end-to-end acknowledgements for packets as an error control
strategy. Namely, once the packet sent from the sender to the
receiver is lost due to, e.g., finite buffers or link failures, this
packet will be
retransmitted by the sender. 
Furthermore, the number of hops that a packet traverses on its path
to the destination is random, e.g., an end-user that is surfing the
Web might download documents from diverse web sites. Our recent work
in \cite{Jelen07e2e} shows that this acknowledgement mechanism,
jointly with the random number of hops, may result in heavy-tailed
(e.g., power law) delays. To illustrate this phenomenon, we consider
the following basic example. Assume that a single data unit (packet)
needs to traverse a random geometric number $L$ of hops before
reaching the destination, $\Pr[L>n]=e^{-p n}, p>0$. Next, assume
that in each hop the packet can independently (independent of $L$ as
well) be lost with probability $1-e^{-q}, q>0$. When a packet is
lost, it is retransmitted by the sender and this procedure continues
until the packet reaches its destination. Then, it is easy to see
that, in conjunction with Remark \ref{remark:lattice} after Theorem
\ref{theorem:asympN2}, the number $N$ of retransmissions that the
sender needs to perform satisfies
$$
 \frac{e^{-p} \Gamma(1+p/q) }{n^{p/q}} \leq \Pr[N>n] \leq  \frac{e^{p}\Gamma(1+p/q)}{n^{p/q}}.
$$
Similarly, assuming that in each hop a packet is processed for one
unit of time, we can derive that the distribution of the total
transmission time $T$ satisfies $\log (\Pr[T>t]) \sim - (p/q) \log
t$.

 Furthermore, when the cause of losses is due to the finiteness of
 buffers, i.e., a packet is lost when it sees a full buffer upon its
 arrival,  the preceding general setup can be more precisely modeled as a sequence of random number $L$ of
 tandem queues \cite{Jelen07e2e}. More specifically, we consider $L$ tandem $\cdot/M/1/b$
queues with each queue being able to accommodate up to (finitely
many) $b$ packets; $M$ stands for exponential (memoryless) service
times. This model can be shown to result in heavy-tailed delays
under quite general assumptions on cross traffic, network topology
and routing scenarios. However, for
 simplicity we only present the following example. As
depicted in Figure \ref{fig:tandemQueueLoop}, suppose that the
single packet, as well as the cross traffic, is sent sequentially
through a chain of finite buffer queues with capacity $b$. Also, we
assume that the cross traffic flows are i.i.d Poisson processes and
 the  service requirements needed for processing different packets
and the same packet at different queues are i.i.d. exponential
random variables. Furthermore, the sender tries to transmit a single
packet through the sequence of queues, and if the packet is lost,
the packet will be immediately retransmitted by the sender. Then,
regardless of how many hops the cross traffic flows traverse before
leaving the system, the distribution of the number of
retransmissions $N$ satisfies the following Theorem
\ref{theorem:tamdem}.
 \begin{figure}[h]
\centering
\includegraphics[width=15cm]{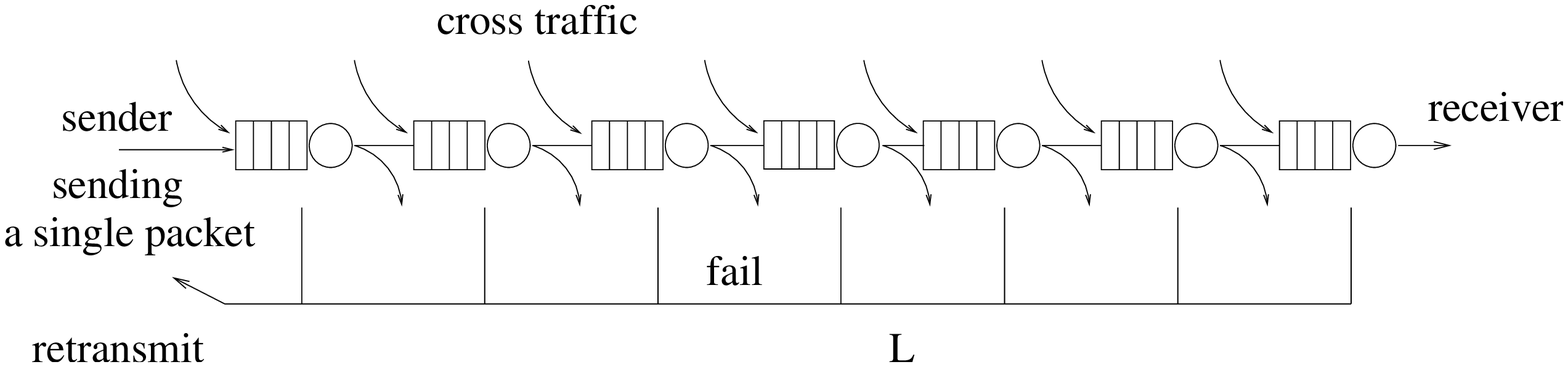}
  \caption{Tandem $\cdot/M/1/b$ queues with finite buffers}
\label{fig:tandemQueueLoop}
\end{figure}
\begin{theorem}\label{theorem:tamdem}
 If the limit $
   p\eqdef \lim_{n\rightarrow \infty} \log \left(\Pr[L>n] \right)/n<0$ exists, then, there exists $0< \alpha_1 \leq
\alpha_2<\infty$, such that
   \begin{align*}
 -\alpha_2 & \leq  \varliminf_{n \rightarrow \infty}\frac{\log \Pr[N>n]}{\log
   n}\leq \varlimsup_{n \rightarrow \infty}\frac{\log \Pr[N>n]}{\log
   n} \leq -\alpha_1.
   \end{align*}
\end{theorem}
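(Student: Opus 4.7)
The plan is to recast the tandem-queue problem as a retransmission model of the form studied in Subsection \ref{ss:VHA}. Let $A$ denote the number of consecutive hops that a single transmission attempt successfully traverses before being blocked by a full buffer; then a size-$L$ packet succeeds on a given attempt iff $\{A \geq L\}$, so
$$N = \inf\{n : A_n \geq L\},$$
where the $A_n$ are the per-attempt traversal counts on successive retransmissions.

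The first step is to establish uniform exponential tail bounds on $A$: there exist constants $0 < q_1 \leq q_2 < \infty$ such that, uniformly over the state of the tandem system at the attempt epoch and over the history of previous retransmissions,
$$e^{-q_2 k} \;\leq\; \Pr[A \geq k \mid \mathcal{F}] \;\leq\; e^{-q_1 k}, \quad k \geq 1.$$
For the upper bound, each $\cdot/M/1/b$ queue with positive Poisson cross-traffic rate has, conditional on any arriving packet, a strictly positive probability $p^{\ast} > 0$ of being full, so $\Pr[A \geq k] \leq (1-p^{\ast})^k$. For the lower bound, the dual observation---that each queue is non-full with some probability at least $p_o > 0$---gives $\Pr[A \geq k] \geq p_o^k$. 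Finiteness of each queue's state space $\{0,1,\ldots,b\}$ makes these bounds uniform over history.

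With these bounds in hand, conditioning on $L$ and using per-attempt success probability $p_s(L) \in [e^{-q_2 L}, e^{-q_1 L}]$ yields, by writing $\Pr[N>n\mid L]$ as a product of conditional failure probabilities and bounding each factor,
$$\expect\bigl[(1 - e^{-q_1 L})^n\bigr] \;\leq\; \Pr[N > n] \;\leq\; \expect\bigl[(1 - e^{-q_2 L})^n\bigr].$$
Each side is an instance of $\expect[(1 - \bar{G}(L))^n]$ from the method of Theorem \ref{theorem:asympN2}, with $\bar{G}(x) = e^{-q_i x}$. Under the hypothesis $\log \Pr[L > n]/n \to p < 0$, invoking Remark \ref{remark:lattice} (as $L$ is lattice valued here) with $\Phi(n) = n^{|p|/q_i}$ shows that both sandwiching expressions decay polynomially, so that on the logarithmic scale $\log \Pr[N>n]/\log n$ is trapped between $-|p|/q_1$ and $-|p|/q_2$. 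Setting $\alpha_1 \eqdef |p|/q_2$ and $\alpha_2 \eqdef |p|/q_1$ gives the desired bounds, with $\alpha_1 \leq \alpha_2$ automatic from $q_1 \leq q_2$.

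The main obstacle is the uniform exponential bound in the first step: the queues' states are coupled through the shared cross-traffic flows and through the tagged packet's own past retransmissions, so one cannot simply invoke product-form stationarity or treat the $A_n$ as i.i.d. However, since each queue's state space has size $b+1$ and the one-step blocking probability at each hop is bounded away from both $0$ and $1$ irrespective of the joint state, the required uniform bounds follow from an elementary Markov-chain estimate rather than any delicate equilibrium calculation, and this is precisely what makes the sandwich in the third paragraph go through.
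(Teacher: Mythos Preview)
Your approach is the paper's: sandwich the per-hop blocking probability between two constants and reduce to the basic model with $\bar G(x)=e^{-q_i x}$. The paper differs in two details. First, rather than asserting uniform bounds $p^\ast,p_o$ from the finite state space $\{0,\dots,b\}$, it makes them concrete via coupling: for the lower bound on $\Pr[N>n]$ it empties queue $i{+}1$ at the instant the tagged packet begins service in queue $i$, which renders the loss events $\{C_i\}_{1\le i\le L}$ i.i.d.\ (conditionally on $L$) with the minimal blocking probability; the upper bound uses an analogous modification. This coupling sidesteps exactly the obstacle you flag in your last paragraph---controlling the conditional blocking probability uniformly over the joint history of all queues and all past retransmissions---and is the cleaner way to justify the step you leave as ``an elementary Markov-chain estimate.'' Second, your appeal to Remark~\ref{remark:lattice} is slightly off, since that remark needs $\Pr[L>n]\sim e^{-|p|n}$ while the hypothesis here is only the log-limit $\log\Pr[L>n]/n\to p$; the paper instead passes to a continuous $L^\ast$ with exact exponential tail $\Pr[L^\ast>x]=e^{-2|p|x}$, stochastically dominated by $L$ for large $x$, and applies Theorem~\ref{theorem:1I} (this is the device of Remark~\ref{remark:lattice2}, not Remark~\ref{remark:lattice}). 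With that adjustment your sandwich yields $\alpha_1=|p|/q_2$ and $\alpha_2=|p|/q_1$ as you state.
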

\begin{remark}
Note that the same result can be easily derived for $T$.
Furthermore, due to the generality of our argument, the proof of
this theorem can be applied to much more complicated routing
schemes, network topologies and cross traffic conditions, e.g., with
routing loops. However, such generalizations, except for complex
notation, do not bring new insights, and therefore, we only study
the current simple example. Further study of this model will be
available in \cite{Jelen07TRe2e}.
\end{remark}
\begin{proof}
Number the sequence of queues from $1$ to $L$ sequentially. Recall
that the packet is lost when it sees a full buffer upon its arrival.
In order to prove the theorem, we construct two systems with
independent loss probabilities at different queues that provide
upper and lower bounds on the loss probabilities for the considered
packet.

  First, we prove the \emph{lower bound}.
 Construct a system that empties queue
$i+1$ whenever the considered packet begins receiving service in
queue $i$ ($1\leq i< L$).  Denote by $C_i$ the event that this
packet is lost when arriving at queue $i+1$. From the procedure of
this construction and the memoryless property, it is easy to see
that $\{C_i\}_{1\leq i\leq L}$ are i.i.d. conditional on $L$. Thus,
we obtain, for $n_0>0$,
\begin{align}\label{eq:tamdemlow1}
 \Pr[N>n] & \geq  \expect \left[\left(1- \prod_{i=1}^{L}(1-\Pr[C_i]) \right)^n {\Bigg | } L
 \right]\geq
 \expect\left[\left(1- (1-\Pr[C_1])^L \right)^n \ind(L>n_0)\right].
\end{align}
Then, we construct a continuous random variable $L^{\ast}$ with
$\Pr[L^{\ast}>x]=e^{-2px}, x\geq 0$,
and  choose $n_0$ large enough such that
 $\Pr[L^{\ast}>x]\leq \Pr[L>x]$ for all $x>n_0$.  Therefore,
 by using stochastic dominance and
replacing $L$ with $L^{\ast}$ in (\ref{eq:tamdemlow1}), we obtain
\begin{align}
 \Pr[N>n] &
 \geq \expect\left[\left(1- (1-\Pr[C_1])^{L^{\ast}} \right)^n \right]-\left(1- (1-\Pr[C_1])^{n_0}
 \right)^n,
\end{align}
which, by setting $\bar{G}(x)=(1-\Pr[C_1])^x$,
$\bar{F}(x)=\Pr[L^{\ast}>x]$ and applying Theorem \ref{theorem:1I},
yields, for some $\alpha_2>0$,
 \begin{align}\label{eq:tamdemlow}
 \varliminf_{n \rightarrow \infty}\frac{\log \Pr[N>n]}{\log
   n} &\geq -\alpha_2.
   \end{align}
Note that this line of argument can be used to rigorously prove
Remark \ref{remark:lattice2}.

 Next, we prove the \emph{upper bound}.
 Construct a system that empties queue $i+1$ whenever the considered packet
begins receiving service in queue $i$ ($1\leq i< L$). Then, using
this construction and similar arguments as in the proof of the lower
bound, we can easily prove that there exists $\alpha_1>0$ such that
\begin{align}
 \varlimsup_{n \rightarrow \infty}\frac{\log \Pr[N>n]}{\log
   n} &\leq -\alpha_1, \nonumber
   \end{align}
which, in conjunction with (\ref{eq:tamdemlow}), finishes the proof.
\qed
\end{proof}

Finally, we would like to point out that, in addition to the
preceding applications in communication networks
\cite{Jelen07e2e,PJ07RETRANS,Jelen07ALOHA,Jelen07TRe2e} and job
processing on machines with failures \cite{FS05,SL06}, the model
studied in this paper may represent a basis for understanding more
complex failure prone systems, e.g., see the recent study on
parallel computing in \cite{Asmussen07July}.

 In conclusion, we would like to
emphasize that, in practice, our results provide an easily
computable benchmark for measuring the tradeoff between the data
statistics and channel characteristics that permits/prevents
satisfactory transmission.

%
%

\section{Proofs}\label{s:proofs}
\subsection{Proof of Proposition \ref{prop:sub}}\label{ss:sub}
As stated earlier in Subsection \ref{s:model}, the proof of this
proposition was originally presented in Lemma 1 of
\cite{PJ07RETRANS} and, we repeat it here for reasons of
completeness.

 \begin{proof}
Note that for any $\delta>0$, there exists $t_{\delta}>0$ such that,
 for all $0<t<t_{\delta}$,
 $$
 1- t\geq e^{-\delta} e^{-t}.
$$
 Therefore, we can choose $x_{\delta}$ large enough, such that
$1-\bar{G}(x)\geq e^{-\delta}e^{-\bar{G}(x)}$ for all
$x>x_{\delta}$. Then,
\begin{align*}
e^{\epsilon n}\Pr[N>n] &\geq e^{\epsilon n}\expect \left[ \left( 1-
\bar{G}(L)\right)^n \ind(L\geq x_{\delta}) \right]
\geq e^{\epsilon n}\expect \left[ e^{n \delta}e^{-n\bar{G}(L)}\ind(L\geq x_{\delta}) \right]\\
&\geq \left( e^{\epsilon-\bar{G}(x_{\delta})-\delta}\right)^n
\bar{F}( x_{\delta}).
\end{align*}
Thus, by selecting $\delta$ small enough and $x_{\delta}$ large
enough, we can always make $e^{\epsilon-\bar{G}(x_{\delta})-\delta}
>1$, and, by passing $n\rightarrow \infty$, we complete
the proof of (\ref{eq:subN}).

Next, we prove the corresponding result for $T$.  Suppose that
$\bar{G}(x_0)>0$ for some $x_0>0$; otherwise, $T$ will be infinite,
which yields (\ref{eq:subT}) immediately. We can always find
$x_1>x_0>0$, such that i.i.d. random variables $X_i \eqdef x_0
\ind(x_0<A_i<x_1)$ satisfy $0<\expect X_1<\infty$. Now, for any
$\zeta>0$,
\begin{align}\label{eq:I1I2}
  \Pr[T>t] &= \Pr\left[\sum_{i=1}^{N-1}(U_i+A_i)+L>t\right]
  \geq \Pr\left[\sum_{i=1}^{N-1}A_i \ind(x_0<A_i<x_1)>t\right] \nonumber\\
  &\geq \Pr\left[\sum_{i=1}^{N-1}X_i>t\right]
  \geq \Pr\left[\sum_{i=1}^{N-1}X_i>t, N\geq \frac{t(1+\zeta)}{\expect X_1}\right]\nonumber\\
  &\geq \Pr\left[N > \frac{t(1+\zeta)}{\expect X_1}+1\right]  -\Pr\left[\sum_{i=1}^{N-1}X_i \leq t, N>
   \frac{t(1+\zeta)}{\expect X_1}+1\right] \nonumber\\
   &\eqdef I_1-I_2.
\end{align}
Since, for $\bar{X}_i\eqdef \expect[X_i]-X_i$,
\begin{align}\label{eq:known}
 I_2 &\leq \Pr\left[\sum_{i \leq t(1+\zeta)/\expect X_1} X_i\leq t
 \right]=\Pr\left[\sum_{i \leq t(1+\zeta)/\expect X_1} \bar{X}_i \geq \zeta
 t\right],
\end{align}
it is well known (e.g., see Example 1.15 of \cite{SW95}) that there
exists $\eta>0$, such that
\begin{equation}\label{eq:subI1}
I_2\leq e^{-\eta t}.
\end{equation}
Therefore, by (\ref{eq:subN}), (\ref{eq:I1I2}) and (\ref{eq:subI1}),
we obtain that for all $0<\epsilon<\eta$,
\[
 e^{\epsilon t}\Pr[T>t] \rightarrow \infty \;\; \text{as}\; t
        \rightarrow \infty,
\]
implying that (\ref{eq:subT}) holds for any $\epsilon>0$.  \qed
 \end{proof}

\subsection{Proof of Proposition \ref{prop:slowVarying}}\label{ss:slowVarying}
\begin{proof}
If $\log (\Phi(x))$ is slowly varying, then, for any $0<\delta<
\epsilon$, there exists $x_{\delta}>0$ such that $\log (\Phi(x))<
x^{\delta}$ for all $x>x_{\delta}$.
 By using the condition (\ref{eq:conditionI2}),  or equivalently (\ref{eq:equivBound2I}),  we
 obtain,  for $n$ large enough,
\begin{align}
 \Pr[N>n]& = \expect[(1-\bar{G}(L))^n] \nonumber\\
 &\geq  \left( 1- \frac{1}{n} \right)^n \Pr \left[\bar{G}(L) \leq \frac{1}{n}
 \right] \nonumber\\
 &\geq  \left( 1- \frac{1}{n} \right)^n \Pr \left[\Phi^{\leftarrow} \left( \bar{F}^{-(1+\epsilon)}(L)
\right) \geq  n
 \right] \nonumber\\
 &\geq \left( 1- \frac{1}{n} \right)^n e^{-x^{\delta}},
 \nonumber
\end{align}
where we use the fact that for $x_{\epsilon}$ chosen in
(\ref{eq:equivBound2I}) one can always select $n$ large enough such
that $\{\bar{G}(L)\leq 1/n \} \subset \{ L>x_{\epsilon}\}$.
Therefore, we obtain,
\begin{align}
 0\leq \varlimsup_{n \to \infty} \frac{ -\log \Pr[N>n]}{ n^{\epsilon}} &  \leq \lim_{n \to \infty}
     \frac{-1+ n^{\delta}}{n^{\epsilon}}=0, \nonumber
\end{align}
which proves the proposition. \qed
\end{proof}

\subsection{Continuation of the proof of Theorem
\ref{theorem:asympN2}}\label{ss:continuation}
\begin{proof}
Now, we prove the \emph{lower bound}. For $K>0$ and $x_{\epsilon}$
selected in (\ref{eq:equivBound2}), choosing $x_{n}>x_{\epsilon}$
 with $ \Phi^{\leftarrow}\left(
 (1-\epsilon)\bar{F}(x_{n})
\right) = n/K$, we obtain, for large $n$,
\begin{align}
\Pr[N>n] &=  \expect \left[ \left(1-\bar{G}(L) \right)^n \right]\nonumber\\
&\geq  \expect \left[ \left(1-\bar{G}(L) \right)^n \ind(  L
> x_{n}) \right]\nonumber\\
 &\geq \expect \left[
\left(1- \frac{1}{\Phi^{\leftarrow}\left((1-\epsilon)V^{-1}\right)}
 \right)^n  \ind( V< \bar{F}(x_{n})) \right ],\nonumber
\end{align}
which, by letting $z=n/\Phi^{\leftarrow}\left( (1-\epsilon) V^{-1}
\right)$, yields
\begin{align}\label{eq:asympN6}
\Pr[N>n]\Phi(n)
        & \geq \int_{0}^{K}\left(1- \frac{z}{n}
\right)^n\frac{\Phi(n)}{\Phi\left( n/z \right)}\frac{ \Phi'\left(
n/z \right)}{\Phi\left( n/z \right)}
        \frac{(1-\epsilon)n}{z^2}dz.
\end{align}
From (\ref{eq:asympN6}), by using the same approach as in deriving
(\ref{eq:propupper5I1}), we obtain, as $n \to \infty$,
\begin{equation}
\Pr[N>n]\Phi(n) \sim \int_{0}^{K}(1-\epsilon)\alpha e^{-z}
z^{\alpha-1}dz, \nonumber
\end{equation}
which, by passing  $K \to \infty$ and $\epsilon \to 0$,  yields
  \begin{equation}\label{eq:asympN7}
   \Pr[N>n]\Phi(n) \gtrsim  \int_{0}^{\infty}\alpha e^{-z}
z^{\alpha-1}dz=\Gamma(\alpha+1).
\end{equation}
Combining (\ref{eq:asympN5}) and (\ref{eq:asympN7}) completes the
proof of (\ref{eq:asympNa}).

Then, we proceed with proving (\ref{eq:asympNb}).  First, we prove
the \emph{lower bound}. Since $\Phi(x)$ is eventually
non-decreasing, we obtain the inequality presented in
(\ref{eq:equivBound2}) again,
and therefore, for $n$ large enough and $\epsilon>0$,
\begin{align}
 \Pr[N>n]& = \expect[(1-\bar{G}(L))^n] \nonumber\\
 &\geq  \left( 1- \frac{\epsilon}{n} \right)^n \Pr \left[\bar{G}(L) \leq \frac{\epsilon}{n}
 \right] \nonumber\\
 &\geq  \left( 1- \frac{\epsilon}{n} \right)^n \Pr \left[\Phi^{\leftarrow} \left((1-\epsilon) \bar{F}^{-1}(L)
\right) \geq  \frac{n}{\epsilon}
 \right] \nonumber\\
 &\geq \left( 1- \frac{\epsilon}{n} \right)^n \frac{1-\epsilon}{\Phi \left(\frac{n}{e}\right)} \; ,
 \nonumber
\end{align}
implying
\begin{align}
 \varliminf_{n \to \infty} \Pr[N>n] \Phi(n)  &\geq  \varliminf_{n \to \infty}\left( 1- \frac{\epsilon}{n} \right)^n
 \frac{(1-\epsilon)\Phi(n)}{\Phi \left(\frac{n}{\epsilon}\right)},\nonumber
\end{align}
which, by passing $\epsilon \to 0$, yields
\begin{equation}\label{eq:lowerlb}
 \varliminf_{n \to \infty} \Pr[N>n] \Phi(n)  \geq  1.
\end{equation}

Next, we prove the \emph{upper bound}. Using a similar approach that
derived (\ref{eq:propupper1}), we obtain
\begin{align}
  \Pr[N>n]  &\leq \expect \left[  e^{- \frac{n}{\Phi^{\leftarrow}\left((1+\epsilon)V^{-1}\right)} } \right] +
  \left(1-\bar{G}(x_{\epsilon}) \right)^n  \nonumber\\
  & \leq \Pr\left[0\leq  \frac{n}{\Phi^{\leftarrow}\left((1+\epsilon)V^{-1}\right)} \leq e^m \right]
      \nonumber\\
      &\;\;\;\;\; + \sum_{k=m}^{\lceil \log (\epsilon n) \rceil} e^{-e^{k}}
        \Pr\left[ e^k \leq  \frac{n}{\Phi^{\leftarrow}\left((1+\epsilon)V^{-1}\right)} \leq e^{k+1} \right]
             + o\left( \frac{1}{\Phi(n)} \right)  \nonumber\\
  &\leq  \frac{1+\epsilon}{\Phi\left(\frac{n}{e^m}\right)}
   + \sum_{k=m}^{\lceil \log (\epsilon n) \rceil} e^{-e^{k}}
          \frac{1+\epsilon}{\Phi \left(  \frac{n}{e^{k+1}} \right)}
      + o\left( \frac{1}{\Phi(n)} \right), \nonumber
\end{align}
resulting in
\begin{equation}\label{eq:lowerlc}
\Pr[N>n]\Phi(n) \leq
\frac{(1+\epsilon)\Phi(n)}{\Phi\left(\frac{n}{e^m}\right)}
   +  \sum_{k=m}^{\lceil \log (\epsilon n) \rceil} e^{-e^{k}}
          \frac{(1+\epsilon)\Phi(n)}{\Phi \left(  \frac{n}{e^{k+1}} \right)}
      + o\left( 1 \right).
\end{equation}
Note that the second term in the right hand side of
(\ref{eq:lowerlc}) is always finite because of (\ref{eq:prop1c})
and, by passing $n \to \infty$ and then $m\to \infty$ in
(\ref{eq:lowerlc}), we obtain
\begin{equation}\label{eq:uplb}
 \varlimsup_{n \to \infty}\Pr[N>n]\Phi(n) \leq 1.
\end{equation}
Combining (\ref{eq:lowerlb}) and (\ref{eq:uplb}) finishes the proof
of  (\ref{eq:asympNb}). \qed
\end{proof}

\subsection{Proof of Proposition \ref{prop:gtrOne}}\label{ss:gtrOne}
\begin{proof}
First, we prove the \emph{lower bound}. By recalling the condition
(\ref{eq:propo1A}), or equivalently (\ref{eq:equivBound2}), and
using $1-x\geq e^{-(1+\epsilon)x}$ for $x$ small enough, we obtain,
for $n$ large enough and $x_0>0$,
\begin{align}
  \Pr[N>n] &\geq \expect[(1-\bar{G}(L))^n
  \ind(L\geq x_{\epsilon})] \geq \expect[e^{-(1+\epsilon)\bar{G}(L)n}
  \ind(L\geq x_{\epsilon})] \nonumber\\
  &\geq \expect \left[  e^{- \frac{(1+\epsilon)n}{\Phi^{\leftarrow}\left((1-\epsilon)V^{-1}\right)} }
   \ind(V\leq \bar{F}(x_{\epsilon})) \right]  \geq  e^{-x_0}
        \Pr\left[  \frac{(1+\epsilon)n}{\Phi^{\leftarrow}\left((1-\epsilon)V^{-1}\right)} \leq x_0,
          V\leq \bar{F}(x_{\epsilon}) \right]
        \nonumber\\
  & = e^{-x_0 }(1-\epsilon)\Phi\left(\frac{(1+\epsilon)n}{x_0} \right)^{-1}
   = (1-\epsilon)e^{-x_0-\lambda\left(\log n - \log \left(\frac{x_0}{1+\epsilon}\right) \right)^{\delta}},
\end{align}
Using the preceding inequality and setting $x_0=\lambda \delta(\log
n)^{\delta-1}$ yields, for $n$ large enough,
\begin{align}
\log \Pr[N>n]^{-1} - \lambda(\log n)^{\delta} &\leq \lambda \left(
\log n - \log \left(\frac{x_0}{1+\epsilon}\right) \right)^{\delta} -
\lambda(\log n)^{\delta}+x_0 -\log
(1-\epsilon) \nonumber\\
&\leq  - (1-\epsilon)\lambda \delta \left( \log n \right)^{\delta-1}
\log \left( \lambda \delta(\log n)^{\delta-1} \right)   + \lambda
\delta(\log n)^{\delta-1}, \nonumber
\end{align}
which, by passing $n \to \infty$ and then $\epsilon \to 0$, results
in
\begin{equation}\label{eq:gtrOnelowerA}
 \log \Pr[N>n]^{-1} - \lambda(\log n)^{\delta} \lesssim -\lambda \delta(\delta-1) (\log \log n)(\log n)^{\delta-1}.
\end{equation}

Next, we prove the \emph{upper bound}. Following the same approach
as in the proof of Theorem \ref{theorem:1I}, we obtain, for large
$n$ and $y=\lambda(\log n)^{\delta} - \lambda \delta(\delta-1) \log
\log n(\log n)^{\delta-1}$,
\begin{align}\label{eq:gtrOneupper1}
  \Pr[N>n] &\leq \sum_{k=0}^{y-1} e^{-k}
        \Pr\left[ k \leq  \frac{n}{\Phi^{\leftarrow}\left((1+\epsilon)V^{-1}\right)} \leq k+1 \right]
        + e^{-y}+  o\left( \Pr[N>n] \right) \nonumber\\
  & \leq  (1+\epsilon)\sum_{k=0}^{ y-1 } e^{-k- \lambda \left( \log n - \log (k+1) \right)^{\delta}}
 + e^{-y}+
  o\left( \Pr[N>n] \right) .
\end{align}
Suppose that $f(x)= x+ \lambda \left( \log n - \log x
\right)^{\delta}$  reaches the minimum at $x^{\ast}$ when $1\leq
x\leq y$. It is easy to check that $f'(x)=1-\lambda \delta (\log n -
\log x)^{\delta-1}/x$ is monotonically increasing for $x$ in $(0,
n)$. Then, by defining
$$ x_1 \eqdef \lambda
\delta(\log n)^{\delta-1} - (1-\epsilon)\lambda
\delta(\delta-1)^2(\log \log n) (\log n)^{\delta-2},\;\;\epsilon>0,
$$
we obtain, after some easy calculations, for large $n$,
$$
 f'(x_1) \geq 1-\frac{(\log n)^{\delta-1}-(1-\epsilon/2)(\delta-1)^2(\log \log n) (\log n)^{\delta-2}}
 {(\log n)^{\delta-1} - (1-\epsilon)(\delta-1)^2(\log \log n) (\log n)^{\delta-2}}> 0,
$$
which implies that $f'(x)>0$ for $x\geq x_1$ and, therefore,
$x^{\ast}<x_1$ for all $n>n_0$. Hence, by (\ref{eq:gtrOneupper1}),
we obtain
\begin{align}
  \Pr[N>n]  & \leq  (1+\epsilon)y e^{1 - \lambda \left( \log n - \log
  x_1
  \right)^{\delta}}+e^{-y}+
  o\left( \Pr[N>n] \right),
\end{align}
which, by recalling the definitions of $y$ and $x_1$,  results in
\begin{equation}\label{eq:gtrOneupperA}
 \log \Pr[N>n]^{-1} - \lambda(\log n)^{\delta} \gtrsim  -(1+\epsilon)\lambda \delta(\delta-1) (\log \log n)(\log n)^{\delta-1}.
\end{equation}
Finally, passing $\epsilon \to 0$ in (\ref{eq:gtrOneupperA}) and
combining it with (\ref{eq:gtrOnelowerA}), we finish the proof. \qed
\end{proof}

\subsection{Proof of Proposition \ref{prop:betweenHalfOne}}\label{ss:betweenHalfOne}
\begin{proof}
First, we prove the \emph{lower bound}. Using the same arguments as
in the proof of the lower bound for Theorem~\ref{theorem:2I}, we
obtain, for $0<\epsilon<1, x_0>0$ and $n$ large enough,
\begin{align}
 \log \left(\Pr[N>n]^{-1} \right) &\leq  x_0 +\frac{1}{1-\epsilon} \log \left(\Phi\left(\frac{(1+\epsilon)n}{x_0}
  \right)\right)=x_0+\frac{1}{1-\epsilon}e^{\lambda \left( \log \left( \frac{(1+\epsilon)n}{x_0} \right)
  \right)^{\delta}}. \nonumber
\end{align}
Setting $x_0=e^{\lambda (\log n)^{\delta} \left( 1- \delta \lambda
(\log n)^{\delta -1} \right) }, 1/2<\delta<1$ in the preceding
inequality yields
\begin{align}
 \log \left(\Pr[N>n]^{-1}\right)&\leq e^{\lambda (\log n)^{\delta} \left( 1- \delta \lambda
(\log n)^{\delta -1} \right) } + \frac{1}{1-\epsilon}e^{\lambda(\log
n - \log x_0 +\log (1+\epsilon))^{\delta}}\nonumber,
\end{align}
which, by noting that $\lambda(\log n - \log x_0 + \log
(1+\epsilon))^{\delta} \leq \lambda(\log n)^{\delta} \left( 1-
(1-\epsilon)\delta \lambda (\log n)^{\delta -1} \right)$ for all $n$
large enough, implies, for $n$ large enough,
\begin{equation}
  \log (\log \Pr[N>n]^{-1}) \leq  \log\left(1+ \frac{1}{1-\epsilon}\right)+ \lambda(\log
n)^{\delta} \left( 1- (1-\epsilon)\delta \lambda (\log n)^{\delta
-1} \right).\nonumber
\end{equation}
Passing $\epsilon \to 0$ in the preceding inequality results in
\begin{equation}\label{eq:propHalfOnelower}
  \log (\log \Pr[N>n]^{-1}) - \lambda (\log n)^{\delta} \leq  - \delta \lambda^2 (\log
  n)^{2\delta-1}.
\end{equation}

Next, we prove the \emph{upper bound}. Following the same approach
as in the proof of Theorem~\ref{theorem:1I}, we obtain
\begin{align}\label{eq:propHalfOneupper1}
  \Pr[N>n]  & \leq  \sum_{k=0}^{ y-1 } e^{-k- \frac{1}{1+\epsilon}e^{\lambda \left( \log n - \log k \right)^{\delta}}}
 + e^{-y}+
  o\left( \Pr[N>n] \right).
\end{align}
Choose $y=e^{\lambda (\log n)^{\delta} \left( 1- (1+\epsilon)\delta
\lambda (\log n)^{\delta -1} \right) }$ and let $f(x)= x+ e^{\lambda
\left( \log n - \log x \right)^{\delta}}/(1+\epsilon)$. Since
$f'(x)=1-e^{\lambda (\log  n - \log x)^{\delta}}/((1+\epsilon)x)$ is
an increasing function for $x$ in $(0,n)$, it is easy to see that,
for all $0<x \leq y$ and $n$ large enough,
$$
  f'(x) \leq 1-\frac{e^{\lambda (\log  n - \log y)^{\delta}}}{(1+\epsilon)y}
  \leq 1- \frac{e^{\lambda (\log n)^{\delta} \left( 1- \delta
\lambda (\log n)^{\delta -1} \right) }}
  {(1+\epsilon)e^{\lambda (\log n)^{\delta} \left( 1- (1+\epsilon)\delta
\lambda (\log n)^{\delta -1} \right) }}<0.
$$
Therefore, for $0\leq k\leq y$, we obtain
$$
  e^{-k- \frac{1}{1+\epsilon}e^{\lambda \left( \log n - \log k \right)^{\delta}}}\leq
   e^{-y-\frac{1}{1+\epsilon} e^{\lambda \left( \log n - \log y \right)^{\delta}}},
$$
which, by (\ref{eq:propHalfOneupper1}), yields
\begin{align}\label{eq:propHalfOneupper2}
  \Pr[N>n]  & \leq  y e^{-y- \frac{1}{1+\epsilon}e^{\lambda \left( \log n - \log y \right)^{\delta}}}
 + e^{-y}+
  o\left( \Pr[N>n] \right) \nonumber\\
  &\leq (y+1) e^{-y}+
  o\left( \Pr[N>n] \right),
\end{align}
implying
\begin{equation}\label{eq:propHalfOneupper}
  \log (\log \Pr[N>n]^{-1}) - \lambda (\log n)^{\delta} \gtrsim   -(1+\epsilon) \delta \lambda^2 (\log
  n)^{2\delta-1}.
\end{equation}
Finally, by passing $\epsilon \to 0$ in (\ref{eq:propHalfOneupper})
and combining it with (\ref{eq:propHalfOnelower}), we finish the
proof. \qed
\end{proof}

\subsection{Proof of Theorem \ref{theorem:asympT}}\label{ss:asympT}
 The proofs are based on large
deviation results developed by S. V. Nagaev in \cite{Na79};
specifically, we summarize Corollary 1.6 and Corollary 1.8 of
\cite{Na79} in this following lemma.
 \begin{lemma}
   \label{lemma:nagaev}
   Let $X_1, X_2,\cdots X_n$ and $X$ be i.i.d random variables with
   $\int_{u\geq 0}u^sd\Pr[X<u] < \infty$ and $\expect X=0$.

  If $1 \leq s \leq 2$, then, there exist finite $y_s, c>0$ such that for
  $x>y>y_s$,
 \begin{equation}\label{eq:nagaev1}
    \Pr\left[\sum_{i=1}^{n}X_i \geq x \right] \leq
 n\Pr[X>y]+\left( \frac{c n}{xy^{s-1}} \right)^{x/2y}.
 \end{equation}

  If $s>2$, then, there exist finite $c>0$ such that
        \begin{equation}\label{eq:nagaev2}
           \Pr\left[\sum_{i=1}^{n}X_i \geq x \right] \leq \frac{c n}{x^{s}} + \exp\left( \frac{- x^2}
            {cn}\right).
            \end{equation}
    \end{lemma}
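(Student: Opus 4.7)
I would follow the classical truncation-plus-Chernoff argument that Nagaev pioneered. The first step is to split each summand at level $y$: write $X_i = X'_i + X''_i$ where $X'_i \eqdef X_i \ind(X_i \leq y)$ and $X''_i \eqdef X_i \ind(X_i > y)$. A union bound on the event that at least one jump exceeds $y$ gives
\[
\Pr\left[\sum_{i=1}^n X_i \geq x\right] \leq n\Pr[X>y] + \Pr\left[\sum_{i=1}^n X'_i \geq x\right],
\]
which is directly responsible for the $n\Pr[X>y]$ term in (\ref{eq:nagaev1}). For (\ref{eq:nagaev2}), Markov's inequality yields $\Pr[X>y] \leq \expect[|X|^s]/y^s$, so a suitable choice $y \asymp x$ produces the polynomial term $cn/x^s$ after absorbing $\expect[|X|^s]$ into the constant.

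Next, the truncated sum is handled by the exponential Markov inequality: for any $\lambda>0$,
\[
\Pr\left[\sum_{i=1}^n X'_i \geq x\right] \leq e^{-\lambda x}\left(\expect\left[e^{\lambda X'}\right]\right)^n.
\]
To bound the MGF using only an $s$-th moment, one invokes the elementary pointwise inequality $e^u - 1 - u \leq c_s |u|^s e^{u_+}$, valid for $1\leq s \leq 2$, combined with the observation $\expect[X'] \leq \expect[X] = 0$ (truncation only chops the positive tail). This yields
\[
\expect\left[e^{\lambda X'}\right] \leq 1 + c_s \lambda^s e^{\lambda y}\expect[|X|^s] \leq \exp\left(c_s \lambda^s e^{\lambda y}\expect[|X|^s]\right),
\]
so the Chernoff exponent becomes $-\lambda x + c_s n \lambda^s e^{\lambda y}\expect[|X|^s]$. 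Optimizing with the ansatz $\lambda = t/y$, the first-order condition forces the dominant balance $t^s e^t \sim x y^{s-1}/(cn)$; this gives $t \sim \log(xy^{s-1}/(cn))$ up to lower-order log corrections, and after absorbing constants the Chernoff bound collapses to $(cn/(xy^{s-1}))^{x/(2y)}$, which is exactly (\ref{eq:nagaev1}). For $s>2$ the finer bound $e^u - 1 - u \leq u^2 e^{u_+}$ together with the finite second moment $\sigma^2 = \expect[X^2]$ gives $\expect[e^{\lambda X'}] \leq \exp(\lambda^2 \sigma^2 e^{\lambda y})$. Choosing $y = x/s$ and $\lambda \asymp x/(cn\sigma^2)$ while keeping $\lambda y$ bounded produces the Gaussian term $\exp(-x^2/(cn))$ of (\ref{eq:nagaev2}); the two regimes $x \lesssim \sqrt{n}\,\sigma$ (Gaussian) and $x \gg \sqrt{n}\,\sigma$ (heavy tail) are then simultaneously covered by the sum of the two terms on the right-hand side.

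The main obstacle is the joint two-parameter optimization over $(\lambda,y)$: both must be tuned at the same time, and the explicit constant form of the bound in (\ref{eq:nagaev1}) depends critically on keeping the $e^{\lambda y}$ factor under control so that the exponent $x/(2y)$ rather than $x/y$ emerges. One must restrict to the regime $x \geq 2y \geq 2y_s$ (which is why the lemma asks $x>y>y_s$), and verify that the constant $c_s$ from the pointwise MGF inequality together with $\expect[|X|^s]$ can be absorbed into a single universal $c$. A secondary subtlety lies at the endpoints $s=1$ (where the MGF inequality degenerates and the argument must fall back on a direct first-moment Markov bound on $X'$) and $s=2$ (where the argument reduces to the familiar Bernstein/Bennett inequality), so uniformity over the range $1\leq s\leq 2$ would require handling these boundary cases carefully and then interpolating.
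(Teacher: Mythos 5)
The paper offers no proof of this lemma at all: the stated ``proof'' is a one-line citation to Nagaev's 1979 paper (Corollaries 1.6 and 1.8), so there is no internal argument to compare your sketch against. Your plan reproduces exactly the truncation-plus-exponential-moment method that Nagaev himself uses, and the decomposition $X_i = X_i\ind(X_i\le y) + X_i\ind(X_i>y)$, the union bound giving $n\Pr[X>y]$, the Chernoff bound on the truncated sum, the pointwise MGF estimate $e^u-1-u\le c_s|u|^s e^{u_+}$, and the observation $\expect[X\ind(X\le y)]\le 0$ are all the right ingredients. In that sense you have found the correct route and the correct obstacles.

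What is missing is the part you yourself flag as ``the main obstacle'' and then leave as a one-sentence hand-wave: the joint tuning of $(\lambda,y)$ and, more to the point, the verification that the constants $c_s$ and $\expect[|X|^s]$ (and, for $s>2$, the variance $\sigma^2$) can actually be folded into a \emph{single} universal $c$ so that the exponent comes out as $x/(2y)$ uniformly over all $x>y>y_s$ and all $n$. Writing $\lambda = t/y$ and balancing gives $t^s e^t \asymp xy^{s-1}/n$, so $t\approx \log(xy^{s-1}/n)$ minus polylogarithmic corrections of order $s\log t$; whether these corrections together with the moment constants can be absorbed into the base $c$ (rather than inflating the $1/2$ in the exponent or forcing a $y_s$ that depends on $n$ and $x$) is precisely the bookkeeping that constitutes the substance of Nagaev's proof, and your sketch does not carry it out. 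A second, more minor point: the hypothesis as stated controls only the \emph{positive} $s$-th moment $\int_{u\ge 0}u^s\,d\Pr[X<u]$, whereas your MGF bound invokes $\expect[|X|^s]$ (and $\sigma^2$ in the $s>2$ case), which require a separate handle on the negative tail; Nagaev's corollaries carry the corresponding hypotheses, and the lemma's statement in the paper is slightly looser than what the truncation argument literally uses. Neither objection indicates a wrong approach --- the plan is sound --- but as written the proposal is a correct blueprint rather than a proof, and the claim that the Chernoff bound ``collapses'' to $(cn/(xy^{s-1}))^{x/(2y)}$ is asserted at exactly the place where the real work lies.
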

\begin{proof}
  Please refer to \cite{Na79}.
\end{proof}

Now, we are ready to prove Theorem \ref{theorem:asympT}.

\begin{proof}
First, we establish the \emph{upper bound}. By recalling Definition
\ref{def:NT}, for any $1/2>\delta>0$, we obtain
\begin{align} \label{eq:upperT123}
  \Pr[T>t]&= \Pr\left[\sum_{i=1}^{N-1}(U_i+A_i)+L >t\right] \nonumber\\
  & \leq \Pr\left[ \sum_{i=1}^{N}(A_i \wedge L+ \expect[U]) > (1-2\delta)t
  \right]+ \Pr\left[ \sum_{i=1}^{N}(U_i- \expect[U]) > \delta t \right]
  +\Pr\left[L > \delta t\right] \nonumber\\
&\eqdef I_1+I_2+I_3.
\end{align}

The condition $\expect [L^{\alpha+\epsilon} ]<\infty$  implies
\begin{equation}\label{eq:upperT3}
I_3\leq \frac{\expect[L^{\alpha+\theta}]}{(\epsilon
t)^{\alpha+\theta}}= O\left( \frac{1}{t^{\alpha+\theta}}\right).
\end{equation}

For $I_2$,  we begin with studying the case of $\alpha>1$, i.e.,
when $\expect[N]<\infty$. Since $N$ is independent of $\{ U_i\}$, by
defining $X_i \eqdef U_i-\expect[U_i]$, we obtain,
$$
I_2 = \sum_{n=1}^{\infty} \Pr[N=n] \Pr\left[\sum_{i=1}^{n} X_i
> \delta t \right].
$$
To evaluate $\Pr\left[\sum_{i=1}^{n} X_i
> \delta t \right]$ in the preceding equality,  we need to apply Lemma \ref{lemma:nagaev}, which results in
 two situations. If $1<s \eqdef \alpha+\theta \leq 2$, using
(\ref{eq:nagaev1}) with $y=\delta t/2$, we obtain, for all $n\geq
1$,
\begin{align}\label{eq:forI22}
\Pr\left[\sum_{i=1}^{n} X_i>\delta
 t\right] &\leq n \Pr[X_1 >\delta t/2]+ \frac{
 2^{s-1}c n}{\delta^{s}{t}^{s}},
\end{align}
implying
\begin{align}\label{eq:I22}
I_2 &\leq \sum_{n=1}^{\infty} \Pr[N = n] \left(n \Pr[X_1 >\delta
t/2]+ \frac{
 2^{s-1}c n}{\delta^{s}{t}^{s}} \right) \nonumber\\
 &\leq \expect[N] \Pr[X_1 >\delta t/2]+ \frac{
 2^{s-1} c \expect[N]}{\delta^{s}{t}^{\alpha+\theta}} = O\left( \frac{1}{t^{\alpha+\theta}} \right).
\end{align}
 Otherwise, if $s=\alpha+\theta > 2$,  by
(\ref{eq:nagaev2}), we derive, for $0<\delta<1$,
$0<\gamma<\alpha\delta/(1+\delta)$,
\begin{align}
I_2 &\leq  \Pr\left[ \sum_{i=1}^{\lfloor t^{1+\delta} \rfloor}X_i >
\delta t
\right]+ \Pr\left[N > t^{1+\delta}\right]\nonumber\\
&=\sum_{n=1}^{\lfloor t^{1+\delta} \rfloor} \Pr[N=n]
\Pr\left[\sum_{i=1}^{n} X_i
> \delta t \right]+ O\left( \frac{1}{t^{(1+\delta)(\alpha-\gamma)}}
\right) \nonumber\\
&\leq \frac{c\expect[N]}{(\delta t)^{\alpha + \theta}} + \exp \left(
-\frac{\delta ^2 t^{1-\delta}}{c} \right)  + O\left(
\frac{1}{t^{(1+\delta)(\alpha-\gamma)}} \right), \nonumber
\end{align}
which implies, for some $\nu>0$,
\begin{equation}\label{eq:I2end}
I_2=O\left( \frac{1}{t^{\alpha+\nu}}  \right).
\end{equation}

Now, we study the case when $0<\alpha \leq 1$. For $1<s\eqdef
1+\theta \leq 2, \theta>0$, recalling (\ref{eq:forI22}) and noting
that $\sum_{n=1}^{\lfloor t^{\zeta} \rfloor}n\Pr[N=n]\leq H
t^{\zeta(1-\alpha+\sigma)}$ for $\alpha>\theta>\sigma>0,
(\theta+1)/(\sigma+1)>\zeta>1$, $H>0$, we obtain, for some $\nu>0$,
\begin{align}
I_2 &\leq \sum_{n=1}^{\lfloor t^{\zeta} \rfloor } \Pr[N = n] \left(n
\Pr[X_1
>\delta t/2]+ \frac{
 2^{s-1}c n }{\delta^{s}{t}^{s}} \right) + \Pr\left[N>t^{\zeta}\right] \nonumber\\
 &\leq H t^{\zeta(1-\alpha+\sigma)}\left( \frac{\expect\left[ X_1^{1+\theta}\right]}{(\delta t/2)^{1+\theta}}+ \frac{
 2^{s-1} c}{\delta^{s}{t}^{1+\theta}} \right) + \Pr\left[N> t^{\zeta}\right] \nonumber\\
 &= O\left( \frac{1}{t^{\alpha+\nu}} \right), \nonumber
\end{align}
which, in conjunction with (\ref{eq:I22}) and (\ref{eq:I2end}),
yields, for some $\nu>0$,
\begin{equation}\label{eq:I2endAll}
I_2=O\left( \frac{1}{t^{\alpha+\nu}}  \right).
\end{equation}

%

Next, we study $I_1$. It is easy to obtain, for $\epsilon>0$,
\begin{align}\label{eq:I1123}
I_1&\leq \Pr\left[ \sum_{i=1}^{\frac{(1-2\delta)t}{
\expect[A+U](1+\delta)}}(A_i\wedge (\epsilon t) + \expect[U])
> (1-\delta)t
  \right]  + \Pr\left[N > \frac{(1-2\delta)t}{
\expect[A+U](1+\delta)} \right]+\Pr[L>\epsilon t]\nonumber \\
&\eqdef I_{11}+I_{12}+I_{13}.
\end{align}
By recalling Theorem \ref{theorem:asympN2}, we know
\begin{equation}\label{eq:I12}
\Pr\left[N > \frac{(1-2\delta)t}{ \expect[A+U](1+\delta)} \right]
\thicksim \frac{\Gamma( \alpha+1 )(\expect[U+A](1+\delta)
)^{\alpha}}{ \Phi((1-2\delta)t) }.
\end{equation}
The same argument for (\ref{eq:upperT3}) implies
\begin{equation}\label{eq:I13}
I_{13} =O\left( \frac{1}{t^{\alpha+\theta}}\right).
\end{equation}
Furthermore, $I_{11}$ is upper bounded by
 \begin{align}
&\Pr\left[ \sum_{i=1}^{\frac{(1-2\delta)t}{
\expect[A+U](1+\delta)}}(A_i\wedge (\epsilon t) +
\expect[U])-(1+\delta)\expect[A+U] \frac{(1-2\delta)t}{\expect[A+U]
(1+\delta)}> \delta t
\right]\nonumber\\
&\;\;\;\;\;  \leq \Pr\left[\sup_{n} \left\{\sum_{i=1}^{n}(A_i \wedge
(\epsilon t) + \expect[U])-n(1+\delta)\expect[A+U] \right\}
> \delta t \right],\nonumber
 \end{align}
where in the preceding probability,   $\sup_{n}
\left\{\sum_{i=1}^{n}(A_i \wedge (\epsilon t) +
\expect[U])-n(1+\delta)\expect[A+U] \right\}$ is equal in
distribution to the stationary workload in a $D/GI/1$ queue with
truncated service times with the stability condition $\expect
[(A\wedge (\epsilon t) +\expect[U])]< (1+\delta)\expect[A+U]$.
Therefore, using a similar proof for Lemma 3.2 of \cite{JEM01b}, we
can show that for any $\beta>0$, there exists $\epsilon>0$ such that
$$
I_{11}=o\left( \frac{1}{t^{\beta}} \right),
$$
 which, in
conjunction with (\ref{eq:I12}), (\ref{eq:I13}), (\ref{eq:I1123}),
 and (\ref{eq:upperT123}),
(\ref{eq:upperT3}), (\ref{eq:I2endAll}), yields,  by passing
$\epsilon, \delta \to 0$ in (\ref{eq:I12}),
\begin{equation}\label{eq:upperT}
\Pr\left[T > t \right] \lesssim \frac{\Gamma( \alpha+1
)(\expect[U+A])^{\alpha}}{\Phi(t) }.
\end{equation}

Then, we prove the \emph{lower bound}. It is easy to obtain, for
$\delta>0$,
\begin{align} \label{eq:lowerT1}
  \Pr[T>t] &= \Pr\left[\sum_{i=1}^{N-1}(U_i+A_i)+L>t\right] \nonumber\\
  &\geq \Pr\left[\sum_{i=1}^{N-1}(U_i+A_i)>t, N\geq \frac{t(1+\delta)}{\expect[U+A]}+1\right]\nonumber\\
  &\geq \Pr\left[N\geq \frac{t(1+\delta)}{\expect[U+A]}+1\right]
      -\Pr\left[\sum_{i=1}^{N-1}(U_i+A_i)\leq t, N\geq
   \frac{t(1+\delta)}{\expect[U+A]}+1\right] \nonumber\\
   &\eqdef I_1-I_2.
\end{align}
For $I_2$, by defining $Y_i\eqdef U_i+A_i -\expect[U+A]$, we obtain
\begin{align*}
 I_2 &\leq \Pr\left[\sum_{i \leq t(1+\delta)/\expect[U+A]} (U_i+A_i)\leq t
 \right] =\Pr\left[\sum_{i \leq t(1+\delta)/\expect[U+A]} (-Y_i)\geq \delta
 t\right] \nonumber
\end{align*}
with $(-Y_i) \leq \expect[U+A] <\infty$.
By Chernoff bound, there exists $h, \eta>0$, such that
\begin{equation}\label{eq:lowerT2}
I_2\leq O\left( he^{-\eta t} \right),
\end{equation}
which,  by Theorem \ref{theorem:asympN2}, equation
(\ref{eq:lowerT1}) and passing $\delta \to 0$, yields
\begin{equation}\label{eq:lowerT}
\Pr\left[T > t \right] \gtrsim \frac{\Gamma( \alpha+1
)(\expect[U+A])^{\alpha}}{\Phi(t) }.
\end{equation}
Combining (\ref{eq:upperT}) and (\ref{eq:lowerT}) completes the
proof.  \qed
\end{proof}

\subsection{Proof of Theorem \ref{theorem:T}}\label{ss:ProofSlowT}
\begin{proof}
First, we prove the \emph{upper bound}. It is easy to see that for
$0<\epsilon<1$,
\begin{align}\label{eq:thrmUpI123}
 \Pr[T>(1+\epsilon)t]&=\Pr\left[\sum_{i=1}^{N-1}((A_i \wedge L) + U_i) + L > (1+\epsilon)t
 \right]\nonumber \\
       &\leq \Pr\left[ \sum_{i=1}^{\lceil t/l(t) \rceil}(A_i \wedge L) > \frac{t}{2} \right]
   +\Pr\left[ \sum_{i=1}^{\lceil t/l(t) \rceil}U_i> \frac{t}{2} \right] +
 \Pr\left[N> {\Big \lceil} \frac{t}{l(t)} {\Big \rceil}+1 \right]\nonumber\\
 &\;\;\;\;  + \Pr[L> \epsilon t] \nonumber \\
 & \eqdef I_1 + I_2 + I_3+I_4.
\end{align}

Now, since $l(\cdot)$ is slowly varying and
$\Pr[L>x]=O(\Phi(x)^{-1})$, we obtain,
\begin{equation}\label{eq:thrmTI4}
I_4=\Pr[L>t]=o\left(\Phi(t)^{1-\epsilon}\right).
\end{equation}
By Theorem \ref{theorem:1I}, we obtain
\begin{equation}
  \lim_{t\to \infty} \frac{\log \left( \Pr\left[N> {\Big \lceil} \frac{t}{l(t)} {\Big \rceil} +1 \right]^{-1} \right)}
               {\log \Phi\left( \frac{t}{l( t)}
  \right)}=1, \nonumber
\end{equation}
which, by (\ref{eq:lx}), yields
\begin{equation}\label{eq:thrmTI3}
  \lim_{t\to \infty} \frac{\log \left( \Pr\left[N> {\Big \lceil} \frac{t}{l(t)}
          {\Big \rceil}+1\right]^{-1} \right)}{\log \left(\Phi (t)
  \right)}=1.
\end{equation}

Next, we evaluate $I_1$ and $I_2$.  For $I_2$,
\begin{align}\label{eq:thrmTI2}
I_2&= \Pr\left[ \sum_{i=1}^{\lceil t/l(t) \rceil}U_i> \frac{t}{2}
\right]\nonumber\\
&\leq \left\lceil \frac{t}{l(t)} \right\rceil \Pr\left[ U_1>
\frac{t}{l(t)} \right]+\Pr\left[ \sum_{i=1}^{\lceil t/l(t)
\rceil}U_i \wedge
\frac{t}{l(t)}> \frac{t}{2} \right]\nonumber\\
&\eqdef I_{21}+I_{22}.
\end{align}
For $\delta>0$ and large $t$,  due to condition (\ref{eq:lxB}) we
obtain $l(t/l(t))\geq (1-\delta/2) l(t)$,  which yields
\begin{align}\label{eq:thrmTI21}
I_{21}&\leq O\left( e^{-(1+\delta)l\left( \frac{t}{l(t)}
\right)}\right) \leq O\left(
e^{-(1+\delta)(1-\frac{\delta}{2})l\left(t \right)}\right) = o\left(
\Phi(t)^{-1} \right).
\end{align}
Then, by using Chernoff bound, for $h>0$, we obtain
\begin{align}
I_{22}&= \Pr\left[ e^{ h \left(\sum_{i=1}^{\lceil t/l(t) \rceil}X_i
\wedge
\frac{t}{l(t)} \right) }> e^{ht/2} \right]\nonumber\\
&\leq e^{-\frac{ht}{2}}\left( \expect\left[e^{ h \left(X_i \wedge
\frac{t}{l(t)} \right) } \right] \right)^{\frac{t}{l(t)}+1},
\nonumber
\end{align}
which, by selecting $h=4l(t)/t$, and noting that
$$e^{ h \left(X_i
\wedge \frac{t}{l(t)} \right) } \leq 1+ (e^{4}-1) \frac{l(t)}{t}
\left(X_1 \wedge \frac{t}{l(t)} \right), $$
 implies
\begin{align}\label{eq:thrmTI22}
I_{22}&\leq e^{-2l(t)}\left( \expect\left[1+ (e^{4}-1)
\frac{l(t)}{t} \left(X_1 \wedge \frac{t}{l(t)} \right)
\right]\right)^{\frac{t}{l(t)}+1} \nonumber\\
&\leq e^{-2l(t)}\left( 1+ (e^{4}-1) \frac{l(t)}{t} \expect\left[X_1
 \right]\right)^{\frac{t}{l(t)}+1} \nonumber\\
 &=o\left( \frac{1}{\Phi(t)} \right).
\end{align}
Combining (\ref{eq:thrmTI2}), (\ref{eq:thrmTI21}) and
(\ref{eq:thrmTI22}) yields
\begin{equation}\label{eq:thrmTI2r}
I_2= o\left(\frac{1}{\Phi(t)} \right).
\end{equation}
For $I_1$, it is easy to see
\begin{align}
I_1&= \Pr\left[ \sum_{i=1}^{\lceil t/l(t) \rceil}(A_i \wedge L)>
\frac{t}{2}
\right]\nonumber\\
&\leq \Pr\left[\sum_{i=1}^{\lceil t/l(t) \rceil} \left(A_i \wedge
\frac{t}{l(t)} \right)>
\frac{t}{l(t)} \right]+\Pr\left[ L>\frac{t}{l(t)}\right]\nonumber\\
&\eqdef I_{11}+I_{12}.
\end{align}
Using the same argument as in deriving (\ref{eq:thrmTI22}), we can
prove that $I_{11}=o(1/\Phi(t))$, which, by noting condition
(\ref{eq:lxB}) implying $I_{12}=o(1/\Phi(t))$, yields
\begin{equation}\label{eq:thrmTI1r}
I_1= o\left(\frac{1}{\Phi(t)} \right).
\end{equation}
Combining (\ref{eq:thrmUpI123}), (\ref{eq:thrmTI4}),
(\ref{eq:thrmTI3}), (\ref{eq:thrmTI2r}) and (\ref{eq:thrmTI1r}),
yields
\begin{equation}\label{eq:thrmUp1}
 \varlimsup_{t \rightarrow \infty}\frac{\log \Pr[T>t]}{ \log(\Phi
   (t))} \leq -1.
\end{equation}

Next, we prove the \emph{lower bound}. Observe
\begin{align}
 \Pr\left[\sum_{i=1}^{N-1}(A_i + U_i) + L >t \right]
       &\geq \Pr\left[ \sum_{i=0}^{N-1}(A_i \wedge 1)   >t ,
       N >  {\Big \lceil} \frac{2t}{\expect[A\wedge 1]} {\Big \rceil} +1
       \right]\nonumber \\
       &\geq  \Pr\left[ N >  {\Big \lceil} \frac{2t}{\expect[A\wedge 1]} {\Big \rceil} +1\right]
           -\Pr\left[ \sum_{i=1}^{  {\Big \lceil} \frac{2t}{\expect[A\wedge 1]} {\Big \rceil} }(A_i \wedge 1) \leq t \right],
           \nonumber
\end{align}
and, by using the same arguments as in deriving (\ref{eq:lowerT2}),
it is very easy to prove that the second probability on the right
hand side of the second
 inequality above is exponentially bounded.  Therefore, using Theorem \ref{theorem:1I} and the
 preceding exponential bound yields
\begin{equation}\label{eq:thrmLower1}
 \varliminf_{t \rightarrow \infty}\frac{\log \Pr[T>t]}{ \Phi(\log
   t)} \geq -1.
\end{equation}
Combining (\ref{eq:thrmUp1}) and (\ref{eq:thrmLower1}) completes the
proof. \qed
\end{proof}

\subsection{Proof of Theorem \ref{theorem:TWeibull}}\label{ss:ProofWeibull}
\begin{proof}
 First, we prove the upper
bound. It is easy to see that, for $\eta\eqdef
\expect[U]/\expect[A+U]$ and $0<\epsilon<1$,
\begin{align}\label{eq:WeibullUpI123}
 \Pr[T>(1+\epsilon)t]&=\Pr\left[\sum_{i=1}^{N-1}((A_i \wedge L) + U_i) + L >t
 \right]\nonumber \\
       &\leq \Pr\left[ \sum_{i=1}^{\left\lfloor \frac{(1-\epsilon)t}{\expect[A+U]} \right\rfloor}(A_i\wedge L)
          > (1-\eta) t \right] +
\Pr\left[ \sum_{i=1}^{\left\lfloor
\frac{(1-\epsilon)t}{\expect[A+U]}
\right\rfloor}U_i> \eta t \right]\nonumber\\
&\;\;\; + \Pr\left[N> \left\lfloor \frac{(1-\epsilon)t}{\expect[A+U]} \right\rfloor  \right] + \Pr[L> \epsilon t] \nonumber \\
 & \eqdef I_1 + I_2 + I_3 +I_4.
\end{align}

The condition on $L$ implies
\begin{equation}\label{eq:WeibullTI4}
I_4=\Pr[L> \epsilon t]=o\left( e^{-(\log \Phi(x))^{1/(\beta+1)}}
\right),
\end{equation}
and, by Theorem \ref{theorem:2I}, we obtain
\begin{equation}\label{eq:WeibullTI3}
   \lim_{t \rightarrow \infty}\frac{ \log
         \left(\Pr\left[N > \left\lfloor \frac{(1-\epsilon)t}{\expect[X_1]} \right\rfloor \right]^{-1} \right)
         }{ \left(\log\Phi(
   t) \right)^{\frac{1}{\beta+1}}}= (1-\epsilon)^{\frac{\beta}{\beta+1}}\frac{ \beta^{\frac{1}{\beta+1}} +
\beta^{-\frac{\beta}{\beta+1}}
}{(\expect[A+U])^{\frac{\beta}{\beta+1}}}.
\end{equation}

Now, we evaluate $I_2$.  By applying the large deviation result
proved in Theorem 3.2 (ii) of \cite{JEM01b}, and noting
$\Pr[U>x]\leq o\left( e^{-x^{(1+\delta/2)\beta/(\beta+1)}} \right)$,
we can prove that there exist $1>\gamma>0$ and $C>0$, such that
\begin{align}\label{eq:WeibullU}
\Pr\left[ \sum_{i=1}^{\left\lfloor
\frac{(1-\epsilon)t}{\expect[A+U]} \right\rfloor} (U_i \wedge \gamma
\epsilon \eta t) - \eta (1-\epsilon)t > \epsilon \eta t \right]&\leq
C \left( e^{-(\epsilon \eta
t)^{(1+\delta/2))\beta/(\beta+1)}} \right)\nonumber\\
&=o\left( e^{-(\log \Phi(x))^{1/(\beta+1)}} \right).
\end{align}
Thus, considering $I_2$, we obtain
\begin{align}
I_2&\leq  \left\lfloor \frac{(1-\epsilon)t}{\expect[X_1]}
\right\rfloor \Pr\left[  U_1 > (\gamma \epsilon \eta) t  \right]+
\Pr\left[ \sum_{i=1}^{\left\lfloor
\frac{(1-\epsilon)t}{\expect[X_1]} \right\rfloor} \left( U_i \wedge
\gamma \epsilon \eta t \right) > \eta t \right],
\end{align}
which, by (\ref{eq:WeibullU}) and the assumption on $U$, yields,
\begin{align}\label{eq:WeibullTI2}
I_2&=  o\left( e^{-(\log \Phi(x))^{1/(\beta+1)}} \right).
\end{align}

For $I_1$, we begin with proving the situation when $\zeta=0$,
$\xi>\beta$, i.e.,  assuming no conditions on $\Pr[A>x]$ beyond
$\expect[A]<\infty$. It is easy to obtain, for $0<\epsilon
<1/(\beta+1)$,
\begin{align}\label{eq:WeibullTI1}
I_1&= \Pr\left[ \sum_{i=1}^{\left\lfloor
\frac{(1-\epsilon)t}{\expect[A+U]} \right\rfloor}(A_i \wedge L)>
(1-\eta)t
\right]\nonumber\\
&\leq \Pr\left[ L> t^{\frac{1}{\beta+1}-\epsilon} \right]+\Pr\left[
\sum_{i=1}^{\left\lfloor \frac{(1-\epsilon)t}{\expect[A+U]}
\right\rfloor} \left( A_i \wedge
t^{\frac{1}{\beta+1}-\epsilon} \right) > (1-\eta)t \right]\nonumber\\
&\eqdef I_{11}+I_{12}.
\end{align}
The condition $\xi>\beta$ implies, for $0<\epsilon <
(1-\beta/\xi)/(\beta+1)$,
\begin{align}\label{eq:WeibullTI11}
I_{11}&\leq O\left( e^{ -t^{\left(\frac{1}{\beta+1}-\epsilon
\right)\xi} }\right)  = o\left( e^{ -(\log \Phi(t))^{1/(\beta+1)}}
\right).
\end{align}
And, by using Chernoff bound, for $h>0$, we obtain
\begin{align}
I_{12}&= \Pr\left[ e^{ h \left( \sum_{i=1}^{\left\lfloor
\frac{(1-\epsilon)t}{\expect[A_1]} \right\rfloor} \left( A_i \wedge
t^{\frac{1}{\beta+1}-\epsilon} \right) \right) }> e^{h(1-\eta)t}
\right] \leq e^{-h(1-\eta)t}\left( \expect\left[e^{ h \left( A_1
\wedge t^{\left(\frac{1}{\beta+1}-\epsilon\right)} \right) } \right]
\right)^{\left\lfloor \frac{(1-\epsilon)t}{\expect[A_1]}
\right\rfloor}, \nonumber
\end{align}
which, by selecting $h=\epsilon (1-\eta)
t^{-\left(\frac{1}{\beta+1}-\epsilon \right)}$, and using $e^x\leq
1+(e^{b}-1)x/b$ for $0\leq x \leq b$, yields
$$e^{ h \left(A_1
\wedge t^{\left(\frac{1}{\beta+1}-\epsilon \right)} \right) } \leq
1+ \frac{e^{\epsilon(1-\eta)}-1}{\epsilon(1-\eta)} h \left(A_1
\wedge t^{\left(\frac{1}{\beta+1}-\epsilon \right)} \right).
$$
Then, the preceding inequalities, for $\epsilon$ small enough such
that $\epsilon(1-\eta)- (1-\epsilon)(e^{\epsilon(1-\eta)}-1)>0$,
imply
\begin{align}\label{eq:WeibullTI12}
I_{12}&\leq e^{-\epsilon (1-\eta)
t^{\frac{\beta}{\beta+1}+\epsilon}}\left( \expect\left[1+
\frac{e^{\epsilon(1-\eta)}-1}{\epsilon(1-\eta)} h \left(A_1 \wedge
t^{\left(\frac{1}{\beta+1}-\epsilon \right)} \right)
\right]\right)^{\left\lfloor \frac{(1-\epsilon)t}{\expect[A_1]}
\right\rfloor} \nonumber\\
&\leq e^{-\epsilon(1-\eta) t^{\frac{\beta}{\beta+1}+\epsilon}}\left(
1+ (e^{\epsilon(1-\eta)}-1)
\frac{t^{\frac{\beta}{\beta+1}+\epsilon}}{t} \expect\left[A_1
 \right]\right)^{\left\lfloor \frac{(1-\epsilon)t}{\expect[A_1]}
\right\rfloor} \nonumber\\
 &=O\left( e^{-\left(\epsilon(1-\eta)- (1-\epsilon)(e^{\epsilon(1-\eta)}-1) \right)
 t^{\frac{\beta}{\beta+1}+\epsilon}}\right)= o\left( e^{ -(\log
\Phi(t))^{1/(\beta+1)}} \right).
\end{align}
Combining (\ref{eq:WeibullTI11}) and (\ref{eq:WeibullTI12}) yields
$I_1=o\left( e^{ -(\log \Phi(t))^{1/(\beta+1)}} \right)$ for
$\zeta=0, \xi>\beta$.

Now, in order to  prove the situation $\zeta>0$ when $\Pr[A>x]$ is
bounded by a Weibull distribution, we need to use the following
lemma that is based on a minor modification of Theorem 3.2 (ii) in
\cite{JEM01b} (or Lemma 2 in \cite{JEM03a}) that can be proved by
selecting $s=\upsilon Q(u)/u, 0<\upsilon<1$ in (5.18) of
\cite{JEM01b}, where $Q(u)$ is defined in \cite{JEM01b}.
\begin{lemma}\label{thrm:jelenkovic2}
If $\Pr[A>x]\leq He^{-x^{\zeta}}$, $H>0, 1>\zeta>0$, then, for $
x^{\theta} < u < \epsilon x$, $\epsilon>0$, $1>\theta>0$ and $ n
\leq H x$, there exist $C>0, 1>\delta>0$, such that
\begin{equation}
\Pr\left[ \sum_{i=1}^{n}A_i \wedge u - n \expect[A]>x \right]\leq C
e^{-\delta u^{\zeta-1}x}. \nonumber
\end{equation}
\end{lemma}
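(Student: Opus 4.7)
The plan is to prove this bound via a classical Chernoff (exponential Markov) argument with a carefully chosen rate parameter. Following the hint given after the lemma statement, the appropriate choice is $s = \upsilon Q(u)/u$, where $Q(u) \approx u^{\zeta}$ is (essentially) the Weibull hazard function; concretely, I would take $s = \upsilon u^{\zeta-1}$ with some $0 < \upsilon \leq 1/2$ to be fixed later.

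First I would write
\[
\Pr\left[\sum_{i=1}^{n}\left(A_i \wedge u - \expect[A]\right) > x \right] \leq e^{-sx}\bigl(M(s)\bigr)^n, \quad M(s) \eqdef \expect\!\left[e^{s((A\wedge u)-\expect[A])}\right],
\]
and then aim for a quadratic MGF bound $\log M(s) \leq K s^{2}$ with a finite constant $K = K(H, \zeta)$. To establish this, I would combine the integration-by-parts identity $\expect[e^{s(A\wedge u)}] = 1 + s\int_{0}^{u} e^{sy}\Pr[A > y]\,dy$ with the Weibull hypothesis $\Pr[A>y] \leq He^{-y^{\zeta}}$. The crucial analytic observation is that for $s=\upsilon u^{\zeta-1}$ with $\upsilon \leq 1/2$, the exponent satisfies $sy - y^{\zeta} \leq -y^{\zeta}/2$ on all of $[0, u]$ (since $sy \leq y^{\zeta}/2$ is equivalent to $y \leq (2\upsilon)^{-1/(1-\zeta)}u$, which covers $[0,u]$ once $\upsilon \leq 1/2$). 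Hence $\int_{0}^{u}e^{sy}\Pr[A>y]\,dy \leq H\int_{0}^{\infty}e^{-y^{\zeta}/2}\,dy < \infty$. After handling the centering by $\expect[A]$ via $e^{sy}-1 \leq sy\,e^{sy}$ and noting that $\int_{u}^{\infty}\Pr[A>y]\,dy$ is exponentially small in $u$, the linear-in-$s$ corrections cancel and what remains is a quadratic bound $\log M(s) \leq K s^{2}$.

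The conclusion is then immediate: inserting this into the Chernoff inequality and using $n \leq Hx$ together with $s = u^{\zeta-1}/2$ yields
\[
-sx + n K s^{2} \leq -sx\left(1 - \tfrac{1}{2} H K u^{\zeta-1}\right).
\]
Because $u \geq x^{\theta}\to\infty$ and $\zeta < 1$, we have $u^{\zeta-1}\to 0$, so for $x$ sufficiently large the parenthetical factor is at least $1/2$, giving $\exp(-sx/2)=\exp(-u^{\zeta-1}x/4)$. Setting $\delta=1/4$ (and absorbing the bounded-$x$ regime into the constant $C$) delivers the stated bound.

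The main obstacle I anticipate is the MGF estimate $\log M(s)\leq Ks^{2}$. A naive Taylor expansion of the form $e^{y} \leq 1 + y + (y^{2}/2)e^{|y|}$ would introduce a blowup factor $e^{su} = e^{\upsilon u^{\zeta}}$ that destroys the argument, so one must work directly at the level of the distribution via the integration-by-parts identity and exploit the Weibull decay of $\Pr[A>y]$ uniformly on $[0,u]$. The centering is also somewhat delicate because we subtract $\expect[A]$ rather than $\expect[A\wedge u]$, which requires careful bookkeeping of the exponentially small tail correction $\int_{u}^{\infty}\Pr[A>y]\,dy$.
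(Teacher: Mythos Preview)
Your proposal is correct and follows precisely the approach the paper indicates: the paper does not give a self-contained proof but cites Theorem~3.2(ii) of \cite{JEM01b} with the hint to take $s=\upsilon Q(u)/u$, which for $Q(u)=u^{\zeta}$ is exactly your choice $s=\upsilon u^{\zeta-1}$, and your Chernoff computation carries this through cleanly. One small simplification: your worry about the centering is unnecessary, since in the bound $\log M(s)\leq s\int_{0}^{u}(e^{sy}-1)\Pr[A>y]\,dy - s\int_{u}^{\infty}\Pr[A>y]\,dy$ the tail term is negative and can simply be dropped, so no ``careful bookkeeping'' of the exponentially small correction is needed.
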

Note that the case $\zeta\geq 1$ is trivial since in this situation
$I_1$ is exponentially bounded using Chernoff bound. Therefore, we
only need to consider the situation $0<\zeta<1$.  Using the union
bound and the independence of $\{A_i\}$ and $L$, it is easy to
obtain, for $0<\epsilon<1/(\beta+1)$,
\begin{align}\label{eq:WeibullTI1B}
I_1&= \Pr\left[ \sum_{i=1}^{\left\lfloor
\frac{(1-\epsilon)t}{\expect[A+U]} \right\rfloor}(A_i \wedge L)>
(1-\eta) t
\right]\nonumber\\
&\leq \Pr\left[ L> \epsilon t \right]+ \Pr\left[
\sum_{i=1}^{\left\lfloor \frac{(1-\epsilon)t}{\expect[A+U]}
\right\rfloor} \left( A_i \wedge t^{\frac{1}{\beta+1}-\epsilon}
\right) > (1-\eta)t \right]\nonumber\\
&\;\;\; + \int_{t^{\frac{1}{\beta+1}-\epsilon}}^{\epsilon t}
\Pr\left[ \sum_{i=1}^{\left\lfloor
\frac{(1-\epsilon)t}{\expect[A+U]} \right\rfloor}(A_i \wedge u)>
(1-\eta)t
\right]d\Pr[L\leq u]  \nonumber\\
&\eqdef I_{11}+I_{12}+I_{13}.
\end{align}

From (\ref{eq:WeibullTI4}) and (\ref{eq:WeibullTI12}), we obtain
\begin{equation}\label{eq:WeibullTIB1112}
  I_{11}+I_{12} = o\left( e^{ -(\log
\Phi(t))^{1/(\beta+1)}} \right).
\end{equation}

Applying Lemma \ref{thrm:jelenkovic2} yields, for
$t^{1/(\beta+1)-\epsilon}\leq u \leq \epsilon t$,
\begin{align}
\Pr\left[ \sum_{i=1}^{\left\lfloor
\frac{(1-\epsilon)t}{\expect[A+U]} \right\rfloor}(A_i \wedge u)>
(1-\eta)t \right]& = \Pr\left[ \sum_{i=1}^{\left\lfloor
\frac{(1-\epsilon)t}{\expect[A+U]} \right\rfloor} \left( A_i \wedge
u \right)-\left\lfloor \frac{(1-\epsilon)t}{\expect[A+U]}
\right\rfloor \expect[A] > \epsilon (1-\eta)t \right]\nonumber\\
&\leq C e^{-\delta \epsilon(1-\eta)u^{\zeta-1}t},\nonumber
\end{align}
resulting in, for some $h>0$,
\begin{align}
I_{13}&\leq \int_{t^{\frac{1}{\beta+1}-\epsilon}}^{\epsilon t} C
e^{-\delta \epsilon(1-\eta)t u^{\zeta-1}}d\Pr[L\leq u]\nonumber\\
&\leq C e^{-\delta \epsilon(1-\eta)t u^{\zeta-1}} \Pr[L> u] {\Big
|}^{t^{\frac{1}{\beta+1}-\epsilon}}_{\epsilon t} +
\int_{t^{\frac{1}{\beta+1}-\epsilon}}^{\epsilon t} He^{-u^{\xi}}C
e^{-\delta \epsilon(1-\eta)t u^{\zeta-1}} (1-\zeta)\delta \epsilon
(1-\eta) t
u^{\zeta-2}du \nonumber\\
&\leq \sup_{t^{\frac{1}{\beta+1}-\epsilon}\leq  u \leq \epsilon t
}\left\{C e^{-u^{\xi}-\delta \epsilon(1-\eta)t u^{\zeta-1}} \right\}
\left( 1+  \int_{t^{\frac{1}{\beta+1}-\epsilon}}^{\epsilon t} H
(1-\zeta)\delta \epsilon (1-\eta) t
u^{\zeta-2}du \right) \nonumber\\
&=O\left( e^{-ht^{\xi/(\xi+1-\zeta)}} \right)= o\left( e^{ -(\log
\Phi(t))^{1/(\beta+1)}} \right).\nonumber
\end{align}
The preceding bound on $I_{13}$, in conjunction with
(\ref{eq:WeibullTIB1112}) and the proof of the case for $\zeta=0$,
implies, for all $\zeta\geq 0$,
\begin{equation}\label{eq:WeibullTIB1}
I_{1}=o\left( e^{ -(\log \Phi(t))^{1/(\beta+1)}} \right).
\end{equation}
Thus,  combining  (\ref{eq:WeibullUpI123}),  (\ref{eq:WeibullTI4}),
(\ref{eq:WeibullTI3}), (\ref{eq:WeibullTI2}),
  (\ref{eq:WeibullTIB1}), and passing $\epsilon
\to 0$ yields
\begin{equation}\label{eq:WeibullUp1}
  \lim_{t \rightarrow \infty}\frac{ \log \Pr[T>t]^{-1}}{ \left(\log\Phi(
   t) \right)^{\frac{1}{\beta+1}}} \geq \frac{ \beta^{\frac{1}{\beta+1}} +
\beta^{-\frac{\beta}{\beta+1}}
}{(\expect[A+U])^{\frac{\beta}{\beta+1}}}.
\end{equation}

Now, we prove the \emph{lower bound}. Using the same argument as in
deriving equation (\ref{eq:lowerT1}) in the proof of the lower bound
for Theorem \ref{theorem:asympT}, it is easy to obtain, for
$\delta>0$,
\begin{align} 
  \Pr[T>t]  &\geq \Pr\left[N\geq \frac{t(1+\delta)}{\expect[U+A]}+1\right]
      -\Pr\left[\sum_{i=1}^{N-1}(U_i+A_i)\leq t, N\geq
   \frac{t(1+\delta)}{\expect[U+A]}+1\right], \nonumber
\end{align}
where the second probability on the right hand side of the preceding
inequality is exponentially bounded (see (\ref{eq:lowerT2})).
Therefore, using Theorem \ref{theorem:2I} and passing $\delta \to 0$
yields
\begin{equation}\label{eq:WeibullLower1}
  \lim_{t \rightarrow \infty}\frac{ \log \Pr[T>t]^{-1}}{ \left(\log\Phi(
   t) \right)^{\frac{1}{\beta+1}}} \leq \frac{ \beta^{\frac{1}{\beta+1}} +
\beta^{-\frac{\beta}{\beta+1}}
}{(\expect[A+U])^{\frac{\beta}{\beta+1}}}.
\end{equation}
Combining (\ref{eq:WeibullUp1}) and (\ref{eq:WeibullLower1})
completes the proof.  \qed
\end{proof}

\small
\bibliographystyle{abbrv}
\bibliography{tcJournal}
\end{document}